\newtheorem{theorem}{Theorem}
\newtheorem{remark}{Remark}
\newtheorem{conjecture}{Conjecture}
\begin{document}
\setcounter{page}{1}

\begin{center}
{\LARGE \bf Inequalities involving arithmetic functions}
\vspace{8mm}

{\Large \bf Stoyan Dimitrov}
\vspace{3mm}

Faculty of Applied Mathematics and Informatics, Technical University of Sofia\\
Blvd. St.Kliment Ohridski 8, Sofia 1756, Bulgaria \\
e-mail: \url{sdimitrov@tu-sofia.bg}

\vspace{2mm}

Department of Bioinformatics and Mathematical Modelling \\
Institute of Biophysics and Biomedical Engineering\\
Bulgarian Academy of Sciences\\
Acad. G. Bonchev Str. Bl. 105, Sofia 1113, Bulgaria \\
e-mail: \url{ xyzstoyan@gmail.com}
\vspace{2mm}
\end{center}
\vspace{10mm}

\emph{Dedicated to Professor Krassimir Atanassov on the occasion of his 70th birthday.}

\vspace{10mm}

{\bf Abstract:}
This paper presents a brief survey of the most important and the most remarkable inequalities involving the basic arithmetic functions.\\
{\bf Keywords:}   Arithmetic functions, Inequalities.\\

\section{Introduction}
\indent

In number theory, an arithmetic function is any function $f(n)$ whose domain is the positive integers
and whose range is a subset of the complex numbers.
The arithmetic functions are very important in many  parts of theoretical and applied sciences,
and many mathematicians have devoted great interest in this field.
For example, the classical Euler totient function, and some other related functions appears almost in all
major domains of mathematics or its applications.
This review article is inspired by the book of S\'{a}ndor and  Atanassov \cite{Sandor-Atanassov}.
The number of inequalities involving arithmetic functions is too large.
That is why we do not claim to be exhaustive and comprehensive.
I apologise to anyone whose work has been overlooked.
Our main goal is to make a brief overview of the most interesting inequalities of the classical arithmetic functions.
We also propose three conjectures about the prime-counting function in subsection \ref{Conjecturesforpix}.
When presenting the mathematical results, we have tried as much as possible to observe some chronological order.
Of course we have to mention the book of S\'{a}ndor, Mitrinovi\'{c} and Crstici \cite{SandorHandbook},
providing many references on inequalities involving arithmetic functions.
We hope that the article will be useful for students, professors and researchers working in number theory.

\section{Notations, definitions and  formulas}
\indent

The letter $p$ with or without subscript will always denote prime number.
Let $n>1$ be positive integer with prime factorization
\begin{equation*}
n=p^{a_1}_1\cdots p^{a_r}_r\,.
\end{equation*}
We denote by $\omega(n)$ the number of the distinct prime factors of $n$. We have
\begin{equation*}
\omega(n)=r \quad \mbox{ and } \quad \omega(1)=0\,.
\end{equation*}
The function $\Omega(n)$ counts the total number of prime factors of $n$ honoring their multiplicity. We have
\begin{equation*}
\Omega(n)=\sum\limits_{i=1}^{r}a_i \quad \mbox{ and } \quad \Omega(1)=0\,.
\end{equation*}
We define by $d(n)$ the number of positive divisors of $n$. We have
\begin{equation*}
d(n)=\prod\limits_{i=1}^{r}(a_i+1) \quad \mbox{ and } \quad d(1)=1\,.
\end{equation*}
The function $\sigma(n)$ denotes the sum of the positive divisors of $n$.
We have
\begin{equation*}
\sigma(n)=\prod\limits_{i=1}^{r}\frac{p^{a_i+1}_i-1}{p_i-1} \quad \mbox{ and } \quad \sigma(1)=1\,.
\end{equation*}
We denote by $\varphi(n)$ the Euler totient function which is defined as
the number of positive integers not greater than $n$ that are coprime to $n$.
We have
\begin{equation*}
\varphi(n)=n \prod\limits_{i=1}^{r}\left(1-\frac{1}{p_i}\right) \quad \mbox{ and } \quad \varphi(1)=1\,.
\end{equation*}
We define the Dedekind function $\psi(n)$ by the formula
\begin{equation*}
\psi(n)=n \prod\limits_{i=1}^{r}\left(1+\frac{1}{p_i}\right) \quad \mbox{ and } \quad \psi(1)=1\,.
\end{equation*}
The prime-counting function $\pi(n)$ is the function counting the number of prime numbers less
than or equal to $n$, i.e.
\begin{equation*}
\pi(n)=\sum\limits_{p\leq n}1\,.
\end{equation*}
The function $D(n)$ denotes the arithmetic derivative. We have
\begin{align*}
&D(0)=D(1)=0\,,\\
&D(p)=1 \quad \mbox{ for any prime } p\\
&D(mn)=D(m)n+mD(n) \quad \mbox{ for any } m,n \in \mathbb{N}\,.
\end{align*}
We define by $H_n$ the $n$-th harmonic number which is the sum of the reciprocals of the first n natural numbers
\begin{equation*}
H_n=1+\frac {1}{2}+\frac {1}{3}+\cdots +\frac {1}{n}=\sum _{k=1}^{n}\frac {1}{k}\,.
\end{equation*}
The values of the sequence $H_n-\ln n$ decrease monotonically towards the limit
\begin{equation*}
\lim_{n\to \infty }\left(H_n-\ln n\right)=\gamma\,,
\end{equation*}
where $\gamma\approx 0.5772156649$ is the Euler constant. \\
We denote by $\mu(n)$ the M\"{o}bius function. We have
\begin{equation*}
\mu(n)=
\begin{cases}
(-1)^{\omega(n)}=(-1)^{\Omega(n)} & \;  \text{ if } \quad  \omega(n=\Omega(n)\,,\\
0  &  \;\text{ if }\quad    \omega(n\neq\Omega(n) \,.
\end{cases}
\end{equation*}
The function $M(n)$ denotes the Mertens function. We have
\begin{equation*}
M(n)=\sum_{k=1}^n \mu(k)\,.
\end{equation*}
As usual $\operatorname{li}(x)$ is the logarithmic integral function
\begin{equation*}
\operatorname{li}(x)=\int_{0}^{x}\frac {dt}{\ln t}\,.
\end{equation*}

\section{The divisor function {\boldmath$d(n)$}}
\indent

It is well known that for all natural numbers $n$ we have
\begin{equation*}
d(n)\leq2\sqrt{n}\,.
\end{equation*}
The proof follows from the fact that of two complementary divisors of a natural
number $n$ one is always not greater than $\sqrt{n}$.
In 1983 Nicolas and Robin \cite{Nicolas1983} established the following theorem.
\begin{theorem}
The inequality
\begin{equation*}
\frac{\log d(n)}{\log2}\leq 1.5379\frac{\log n}{\log\log n}
\end{equation*}
holds for all $n\geq3$.
\end{theorem}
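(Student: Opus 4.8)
The plan is to combine the elementary Rankin-type upper bound for $d(n)$ with a near-optimal choice of an auxiliary parameter, and then absorb all lower-order terms into the gap between the true extremal constant and $1.5379$, dealing with the remaining bounded range of $n$ by a direct check.

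\emph{Step 1 (the key inequality).} For any real $s>0$ and any $n=p_1^{a_1}\cdots p_r^{a_r}>1$,
\[
d(n)=\prod_{i=1}^{r}(a_i+1)=n^{1/s}\prod_{i=1}^{r}\frac{a_i+1}{p_i^{a_i/s}}\le n^{1/s}\prod_{p<2^{s}}\ \sup_{a\ge 0}\frac{a+1}{p^{a/s}},
\]
since a prime power with $p\ge 2^{s}$ satisfies $\frac{a+1}{p^{a/s}}\le\frac{a+1}{2^{a}}\le 1$ and may be dropped, while $\sup_{a\ge0}\frac{a+1}{p^{a/s}}\ge1$ (take $a=0$). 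A one–variable optimization gives $\sup_{a\ge 0}\frac{a+1}{p^{a/s}}\le\frac{s}{e\log p}\,p^{1/s}$ whenever $p\le e^{s}$, so taking logarithms
\[
\log d(n)\le\frac{\log n}{s}+\sum_{p<2^{s}}\Bigl(\log s-1-\log\log p+\frac{\log p}{s}\Bigr).
\]

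\emph{Step 2 (choice of $s$ and prime estimates).} The two dominant contributions on the right are $\frac{\log n}{s}$ and $\frac{1}{s}\sum_{p<2^{s}}\log p=\frac{\theta(2^{s})}{s}$; minimizing their sum leads to $2^{s}\approx\frac{\log n}{\log\log n}$, i.e.\ $s\approx\frac{\log\log n}{\log 2}$, and I would fix $s=s(n)$ at this near-optimal value (adjusted slightly if convenient). Then I would estimate the prime sum using explicit Chebyshev / Rosser--Schoenfeld inequalities: an upper bound $\theta(y)\le(1+\varepsilon_1)y$, an upper bound $\pi(y)\le(1+\varepsilon_2)\,y/\log y$ for the number of terms, and a lower bound for $\sum_{p\le y}\log\log p$ obtained by keeping only the primes in $[\sqrt y,y]$, where $\log\log p\ge\log\log\sqrt y=\log\log y-\log 2$. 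Substituting back collapses the right-hand side to $\log 2\cdot\frac{\log n}{\log\log n}\,(1+o(1))$, and tracking the explicit constants shows the coefficient is below $1.5379\log 2$ as soon as $n\ge n_0$ for an effectively computable $n_0$.

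\emph{Step 3 (the bounded range, and the main obstacle).} The delicate point is that $1.5379$ is essentially best possible: the quantity $\bigl(\log d(n)/\log 2\bigr)\cdot\bigl(\log\log n/\log n\bigr)$ comes within a whisker of it around $n=6983776800=2^{5}\cdot3^{3}\cdot5^{2}\cdot7\cdot11\cdot13\cdot17\cdot19$, so the slack available in Step 2 is tiny, every estimate must be taken near-optimally, and $n_0$ is unavoidably large (well past $10^{9}$). To finish, observe that $\frac{\log n}{\log\log n}$ is increasing and that $\max_{m\le n}d(m)$ is attained at a highly composite number $n^{*}\le n$, so $d(n)\le d(n^{*})$; hence if the asserted inequality failed at some $n\le n_0$ it would already fail at the highly composite number $n^{*}$, and it suffices to verify it at the comparatively sparse, explicitly enumerable list of highly composite numbers up to $n_0$. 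Thus the real work is not any single step but making the asymptotic analysis and the finite verification meet in the middle with sharp enough constants — which is exactly what Nicolas and Robin carry out.
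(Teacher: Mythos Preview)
The paper is a survey and its ``proof'' is only the citation to Nicolas--Robin; your outline --- Rankin's inequality, a near-optimal choice of the parameter $s$, explicit Chebyshev/Rosser--Schoenfeld estimates, and a finite verification at the extremal integers --- is exactly their method and is correctly sketched, including the identification of $n=2^{5}\cdot3^{3}\cdot5^{2}\cdot7\cdot11\cdot13\cdot17\cdot19$ as the critical case. The one refinement worth noting is that Nicolas and Robin reduce not merely to the highly composite numbers but to the \emph{superior} highly composite numbers (those maximizing $d(n)/n^{\varepsilon}$ for some $\varepsilon>0$), a much sparser sequence on which $\log d(n)$ has an explicit closed form in $\theta$ and $\pi$; this sharpening of your Step~3 is what lets the analytic range and the finite check actually meet with the tiny slack that the constant $1.5379$ allows.
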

\begin{proof}
See (\cite{Nicolas1983}, Theorem 1).
\end{proof}
In the same year Robin \cite{Robin1983} proved that the inequalities
\begin{align*}
&\frac{\log d(n)}{\log2}\leq \frac{\log n}{\log\log n}+1.9349\frac{\log n}{(\log\log n)^2}\,,\\
&\frac{\log d(n)}{\log2}\leq \frac{\log n}{\log\log n}+\frac{\log n}{(\log\log n)^2}+4.7624\frac{\log n}{(\log\log n)^3}\,,\\
&\frac{\log d(n)}{\log2}\leq \frac{\log n}{\log\log n-1.39177}
\end{align*}
holds for all $n\geq56$.
In 1987 S\'{a}ndor \cite{Sandor1987} deduced the results
\begin{align*}
&(d(mn))^2\geq d(m^2)\,d(n^2)  \quad \mbox{ for all } \quad  m, n = 1, 2, 3, \ldots\,,\\
&\frac{d(m^2n)\,d(k^2n) }{(d(mnk))^2}\geq \frac{d(m^2)\,d(k^2)}{(d(mk))^2}  \quad \mbox{ for all } \quad  m, n, k = 1, 2, 3, \ldots
\end{align*}
Further in 2002 S\'{a}ndor \cite{Sandor2002} got
\begin{equation*}
d(mn)\leq d(m)d(n)\quad \mbox{ for all } \quad  m, n = 1, 2, 3, \ldots\,.
\end{equation*}
In 2009 Minculete and Dicu \cite{Minculete2009} obtained the inequalities
\begin{align*}
&\frac{d(m)}{m}\leq \frac{d(n)}{n} \quad \mbox{ for all } n|m \quad  \mbox{ and } \quad  m, n = 1, 2, 3, \ldots\,.\\
&\frac{d(mn)}{mn}\leq \frac{d(m)+d(n)}{m+n}  \quad \mbox{ for all } \quad  m, n = 1, 2, 3, \ldots\,,\\
&d(mn)\leq \frac{m^2d(n)+n^2d(m)}{m+n} \quad \mbox{ for all } \quad  m, n = 1, 2, 3, \ldots\,,\\
&d(mn)\leq \frac{m^2d(n)+n^2d(m)}{m+n} \quad \mbox{ for all } \quad  m, n = 1, 2, 3, \ldots\,,\\
&d(mn)\leq m\sqrt{n}+n\sqrt{m} \quad \mbox{ for all } \quad  m, n = 1, 2, 3, \ldots\,.
\end{align*}
In the same year S\'{a}ndor and Kov\'{a}cs \cite{Sandor2009} got that for all $n\geq1$ we have
\begin{equation*}
d(n)<4\sqrt[3]{n}\,.
\end{equation*}
Other results concerning inequalities with $d(n)$ can be found in \cite{Urroz}.

\section{The sum of the positive divisors {\boldmath$\sigma(n)$}}
\indent

It is well known that
\begin{equation*}
\sigma(n)\geq n+1\,.
\end{equation*}
On the other hand for all composite numbers $n$ we have
\begin{equation*}
\sigma(n)>n+\sqrt{n}\,.
\end{equation*}
The proof follows from the fact that of two complementary divisors of a composite
number $n$ one is always not less than $\sqrt{n}$.
In 1962 Rosser and Schoenfeld \cite{Rosser1962} proved the following theorem.
\begin{theorem}\label{Rosser1962}
For every integer $n\geq3$ we have
\begin{equation*}
\sigma(n)< e^\gamma n\log\log n + \frac{2.50637n}{\log\log n}\,.
\end{equation*}
\end{theorem}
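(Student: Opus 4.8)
The plan is to reduce the theorem to an upper bound for $n/\varphi(n)$ and then to apply an effective form of Mertens' third theorem. The first step is the elementary identity, valid for every $n>1$,
\begin{equation*}
\sigma(n)\,\varphi(n) = n^{2}\prod_{p^{a}\| n}\bigl(1 - p^{-a-1}\bigr) < n^{2},
\end{equation*}
which follows at once from the product formulas of Section~2 by multiplying $\sigma(p^{a}) = (p^{a+1}-1)/(p-1)$ with $\varphi(p^{a}) = p^{a-1}(p-1)$ and taking the product over $p^{a}\| n$. Consequently $\sigma(n) < n^{2}/\varphi(n)$, so it suffices to prove
\begin{equation*}
\frac{n}{\varphi(n)} < e^{\gamma}\log\log n + \frac{2.50637}{\log\log n}\qquad(n\ge 3).
\end{equation*}

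Next I would isolate the extremal case. Since $n/\varphi(n) = \prod_{p\mid n}(1-1/p)^{-1}$ depends only on the set of primes dividing $n$, and $t\mapsto t/(t-1)$ is decreasing, among all integers with exactly $k$ distinct prime factors the ratio $n/\varphi(n)$ is largest, and $n$ itself smallest, when $n$ is the $k$-th primorial $N_k = p_1p_2\cdots p_k$. Put $f(x) = e^{\gamma}\log\log x + 2.50637/\log\log x$; a short calculus computation shows that on $[3,\infty)$ the function $f$ has minimum value $2\sqrt{2.50637\,e^{\gamma}}$ and is increasing for $x\ge 27$. If $\omega(n)\le 3$ then $n/\varphi(n)\le \tfrac21\cdot\tfrac32\cdot\tfrac54 = \tfrac{15}{4}$, which lies below that minimum, so the desired inequality is immediate. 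If $\omega(n) = k\ge 4$ then $n\ge N_k\ge 210$, so monotonicity gives $n/\varphi(n)\le N_k/\varphi(N_k) < f(N_k)\le f(n)$, provided the inequality is already known at the primorials.

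It remains to handle $n=N_k$. Writing $\vartheta(x) = \sum_{p\le x}\log p$, one has $\log N_k = \vartheta(p_k)$ and $N_k/\varphi(N_k) = \prod_{p\le p_k}(1-1/p)^{-1}$, so the target takes the form
\begin{equation*}
\prod_{p\le p_k}\Bigl(1-\frac1p\Bigr)^{-1} < e^{\gamma}\log\vartheta(p_k) + \frac{2.50637}{\log\vartheta(p_k)}.
\end{equation*}
Here the explicit prime-number estimates enter: one uses an effective upper bound for Mertens' product of the shape $\prod_{p\le x}(1-1/p)^{-1} < e^{\gamma}\log x\,\bigl(1 + O(1/\log^{2}x)\bigr)$, together with an effective Chebyshev estimate $\vartheta(x) = x + O(x/\log x)$ that lets us pass from $\log p_k$ to $\log\vartheta(p_k) = \log\log N_k$ with a controlled loss, and then verifies the finitely many small primorials by direct computation.

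The main obstacle is precisely this last matching of constants. The slack $O(1/\log^{2}x)$ in the effective Mertens bound and the deficit $e^{\gamma}\bigl|\log(\vartheta(p_k)/p_k)\bigr|$ caused by $\vartheta(x)$ differing from $x$ must both be shown to remain below $2.50637/\log\log N_k$ for every $k$; this rules out soft asymptotics and requires the sharp explicit versions of these estimates, supplemented by a numerical check over small $n$. Alternatively, one may simply cite the self-contained treatment of Rosser and Schoenfeld \cite{Rosser1962}.
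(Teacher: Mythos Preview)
The paper does not actually prove this result; its ``proof'' consists solely of the reference ``See (\cite{Rosser1962}, Theorem 15).'' Your sketch therefore goes well beyond what the paper provides, and it is correct in outline: the reduction $\sigma(n)<n^{2}/\varphi(n)$, the passage to primorials via the observation that $n/\varphi(n)$ depends only on the prime support, the monotonicity of $f(x)=e^{\gamma}\log\log x+2.50637/\log\log x$ past its minimum, and the final appeal to effective Mertens and Chebyshev estimates together with a finite check is precisely the route taken in Rosser and Schoenfeld's original paper. Your identification of the ``matching of constants'' at the primorials as the genuine difficulty is also accurate; that step is where all the work lies and cannot be short-circuited, so your closing deferral to \cite{Rosser1962} is in the end the same move the survey makes.
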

\begin{proof}
See (\cite{Rosser1962}, Theorem 15).
\end{proof}
In 1972 Annapurna \cite{Annapurna} proved that for every natural number $n\neq1, 2, 3, 4, 6, 8, 12$ we have
\begin{equation*}
\sigma(n)<\frac{6}{\pi^2}n\sqrt{n}\,.
\end{equation*}
The next year Bruckman (\cite{SandorHandbook}, p. 88) showed that if $m > 1, n > 1$ are natural numbers, then
\begin{equation*}
\sigma(mn)>\sigma(m)+\sigma(n)\,.
\end{equation*}
Further in 1984 Robin \cite{Robin1984} improved the result of Rosser and Schoenfeld in Theorem \ref{Rosser1962}.
\begin{theorem}\label{Robin1984}
The inequality
\begin{equation*}
\sigma(n) < e^\gamma n\log\log n + \frac{0.6483n}{\log\log n}
\end{equation*}
holds for all $n\geq3$.
\end{theorem}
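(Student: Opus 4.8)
The plan is to run the Rosser--Schoenfeld-type argument behind Theorem~\ref{Rosser1962}, but to calibrate the constant against the precise structure of the integers on which $\sigma(n)/n$ is large; this is what should lower $2.50637$ to the essentially optimal $0.6483$.

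\textbf{Step 1: reduction to superabundant $n$.} Call $n$ \emph{superabundant} if $\sigma(m)/m<\sigma(n)/n$ for all $1\le m<n$, and put $g(x)=e^\gamma\log\log x+0.6483/\log\log x$. First I would note that $g$ is increasing on $[7,\infty)$, since $g'(x)=\frac{1}{x\log x}\left(e^\gamma-\frac{0.6483}{(\log\log x)^2}\right)>0$ there, and recall the standard fact that every $n$ admits a superabundant $N\le n$ with $\sigma(N)/N\ge\sigma(n)/n$. Then, if the theorem failed --- say $\sigma(n)/n\ge g(n)$ for some $n\ge3$ --- the cases $3\le n\le11$ would be excluded by direct inspection (the tightest being $n=6$), so $n\ge12$; but then $g(n)>2$, and $\sigma(N)/N\ge g(n)>2$ forces $N\ge12$, whence $g(N)\le g(n)\le\sigma(N)/N$, i.e. the inequality already fails at the superabundant $N\in[12,n]$. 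So it suffices to prove $\sigma(n)/n<g(n)$ for superabundant $n\ge12$.

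\textbf{Step 2: from $\sigma(n)/n$ to a range of primes.} For superabundant $n=2^{a_1}3^{a_2}\cdots p_k^{a_k}$ with $p_1<\cdots<p_k$, the Alaoglu--Erd\H{o}s description says the prime divisors of $n$ are exactly the first $k$ primes and $a_1\ge\cdots\ge a_k\ge1$, so
\begin{equation*}
\frac{\sigma(n)}{n}=\prod_{i=1}^{k}\frac{1-p_i^{-(a_i+1)}}{1-p_i^{-1}}<\prod_{p\le p_k}\frac{p}{p-1}\qquad\text{and}\qquad\log\log n\ge\log\theta(p_k),
\end{equation*}
the last step because $\log n=\sum_i a_i\log p_i\ge\theta(p_k)$. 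Feeding in the explicit form of Mertens' third theorem for $\prod_{p\le x}\frac{p}{p-1}$ and an explicit lower bound for Chebyshev's $\theta$ would reduce the task to a one-variable inequality comparing the resulting upper bound for $\sigma(n)/n$, of size $e^\gamma(\log p_k)\bigl(1+O((\log p_k)^{-2})\bigr)$, with $e^\gamma\log\theta(p_k)+0.6483/\log\theta(p_k)$, where $\log\theta(p_k)=\log p_k+O((\log p_k)^{-1})$.

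\textbf{Step 3: the crux, and why $0.6483$.} The hard part is that this comparison is razor-thin: both sides are $e^\gamma\log\log n+o(1)$ and the margin one must preserve is only of order $(\log\log n)^{-2}$, so the crudest route --- $\sigma(n)/n<\prod_{p\le p_k}\frac{p}{p-1}$ followed by the weakest Mertens estimate $\prod_{p\le x}\frac{p}{p-1}<e^\gamma(\log x)\bigl(1+(\log x)^{-2}\bigr)$ --- already overshoots, its $e^\gamma/\log x$ error swamping the permitted $0.6483/\log\log n$. So one has to be finer: for the colossally abundant $n$, the only ones that come near the bound, the factor $\prod_{p\le p_k}(1-p^{-(a_p+1)})$ discarded above is $1-O(p_k^{-1/2})$, so $\sigma(n)/n$ lies far closer to $e^\gamma\log\log n$ than the naive estimate shows, and this has to be combined with the sharpest available explicit constants in the Mertens and $\theta$ bounds. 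In practice I would split the range of $p_k$: beyond an explicit threshold the asymptotic comparison closes with room to spare, while below it one verifies $\sigma(n)/n<g(n)$ by a finite computation over the relevant superabundant $n$ --- and it is there that the extremal example $n=12$ appears, with $\bigl(\sigma(12)/12-e^\gamma\log\log 12\bigr)\log\log 12=0.6482\ldots$, pinning the constant. Finally, this result sharpens Theorem~\ref{Rosser1962}, and deleting the error term for $n\ge5041$ gives exactly the inequality that is equivalent to the Riemann Hypothesis.
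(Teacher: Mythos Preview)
The paper gives no proof of this theorem: its entire argument is the single line ``See (\cite{Robin1984}, Theorem 2)''. So there is nothing in the text to compare against, and your sketch is effectively an outline of Robin's own argument in \cite{Robin1984}. In its broad shape --- reduce to extremal $n$, bound $\sigma(n)/n$ via Mertens' product over $p\le p_k$, bound $\log\log n$ from below via $\theta(p_k)$, push the analytic comparison past an explicit threshold and finish the small range by computation, with $n=12$ pinning the constant $(\sigma(12)/12-e^\gamma\log\log 12)\log\log 12=0.6482\ldots$ --- it is faithful to Robin's method, and your monotonicity check for $g$ and the small-$n$ verification in Step~1 are correct.

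There is one structural gap worth naming. Your Step~1 reduces only to \emph{superabundant} $n$, but the saving you invoke in Step~3, that the discarded factor $\prod_{p\le p_k}\bigl(1-p^{-(a_p+1)}\bigr)$ is extremely close to $1$, is a feature of \emph{colossally abundant} numbers, where $a_p$ is essentially $\lfloor\log p_k/\log p\rfloor$; a generic superabundant number need not have exponents of that shape, and then the factor can sit well below $1$ without $\log n$ being correspondingly larger. Robin closes this by reducing further to colossally abundant $n$ and proving a separate oscillation lemma controlling how much $\sigma(n)/(n\log\log n)$ can vary for $n$ between two consecutive CA numbers. Without that interpolation step (or an equivalent device), your Step~1 and Step~3 are not yet connected, and the ``razor-thin'' comparison you correctly identify cannot be completed. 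This is the one substantive piece your sketch is missing relative to Robin's actual proof.
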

\begin{proof}
See (\cite{Robin1984}, Theorem 2).
\end{proof}
In 2008 Sandor \cite{Sandor2008a} proved that for all composite numbers $n$ that are not the square of a prime number, we have
\begin{equation*}
\sigma(n)>n+\sqrt{n}+\sqrt[3]{n}\,.
\end{equation*}
Afterwards in 2017 Axler \cite{Axler2017} improved Robin's upper bound in Theorem \ref{Robin1984}.
\begin{theorem}\label{Axler2017} Set
\begin{equation*}
\mathcal{A} = \{1; 2; 4; 6; 8; 10; 12; 16; 18; 20; 24; 30; 36; 48; 60; 72; 120; 180; 240; 360; 2520\}
\end{equation*}
The inequality
\begin{equation*}
\sigma(n) < e^\gamma n\log\log n + \frac{0.1209n}{(\log\log n)^2}
\end{equation*}
holds for every positive integer $n\in \mathbb{N}\setminus\mathcal{A}$.
\end{theorem}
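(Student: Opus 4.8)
The plan is to follow the architecture of Robin's proof of Theorem~\ref{Robin1984}, feeding in the strongest currently available explicit estimates for the Chebyshev function $\theta(x)=\sum_{p\le x}\log p$ and for Mertens' product. First rewrite the claim in the scale-invariant form $\frac{\sigma(n)}{n}<e^{\gamma}\log\log n+\frac{0.1209}{(\log\log n)^{2}}$. Since $\frac{\sigma(n)}{n}=\prod_{i=1}^{r}\bigl(1+p_i^{-1}+\cdots+p_i^{-a_i}\bigr)$ depends only on the exponent pattern of $n$, the quantity $\frac{\sigma(n)}{n}$, measured against the size of $n$, attains its extremal values on the colossally abundant numbers; and a bracketing argument between consecutive colossally abundant numbers, due in essence to Robin and using that $\log\log n$ varies negligibly across such a gap, reduces the assertion to two tasks: (i) proving it for every colossally abundant $N$ whose largest prime factor $p_{k}$ satisfies $p_{k}\ge x_{0}$, for a fixed explicit threshold $x_{0}$; and (ii) a finite verification below the corresponding bound, which determines the exceptional set $\mathcal{A}$.

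For task (i), write $N=p_1^{a_1}\cdots p_k^{a_k}$ with $a_1\ge a_2\ge\cdots\ge a_k\ge1$. Then
\[
\frac{\sigma(N)}{N}=\prod_{i=1}^{k}\frac{1-p_i^{-a_i-1}}{1-p_i^{-1}}<\prod_{i=1}^{k}\frac{p_i}{p_i-1}=\prod_{p\le p_k}\frac{p}{p-1}.
\]
The decisive input --- the place where one gains a power of $\log\log n$ over Robin --- is a modern explicit form of Mertens' third theorem with relative error of order $1/\log^{3}x$ rather than the $1/\log^{2}x$ available in 1984, conveniently written through $\theta$ in the shape
\[
\prod_{p\le x}\frac{p}{p-1}<e^{\gamma}\Bigl(\log\theta(x)+\frac{c}{\log^{2}x}\Bigr)\qquad(x\ge x_{0}),
\]
extracted from Dusart's refinements of the Rosser--Schoenfeld estimates \cite{Rosser1962} together with an equally sharp explicit bound on $\theta(x)-x$; here $c$ can be kept small enough that $ce^{\gamma}<0.1209$. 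Since $\log N=\sum_{i=1}^{k}a_i\log p_i\ge\theta(p_k)$, taking $x=p_k$ gives $\frac{\sigma(N)}{N}<e^{\gamma}\log\log N+\frac{ce^{\gamma}}{\log^{2}p_k}$.

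It then remains to compare $\log p_k$ with $\log\log N$. Writing $\log N=\theta(p_k)+E$ with $E=\sum_{i:\,a_i\ge2}(a_i-1)\log p_i=O(\sqrt{p_k})$ and using explicit two-sided bounds $|\theta(x)-x|<\delta(x)\,x$ with $\delta(x)\to0$, one obtains $\log\log N=\log p_k+O\bigl(\delta(p_k)\bigr)$, whence $\frac{1}{\log^{2}p_k}\le\frac{1}{(\log\log N)^{2}}\bigl(1+O(\delta(p_k)/\log p_k)\bigr)$. Substituting, $\frac{\sigma(N)}{N}-e^{\gamma}\log\log N<\frac{ce^{\gamma}}{(\log\log N)^{2}}+(\text{lower-order terms})$, and for $p_k\ge x_0$ the constant $ce^{\gamma}$ together with the lower-order contribution stays below $0.1209$; this settles (i) with a margin, which is exactly what absorbs the loss incurred in the bracketing reduction. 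Task (ii) is a finite computation: above a small explicit bound the reduction to superabundant numbers (which are very sparse) applies, while the few genuine exceptions --- the $21$ numbers of $\mathcal{A}$, all at most $2520$ --- are located by a direct check on small $n$.

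The substance of the proof lies in task (i): one must produce the explicit Mertens-type inequality above with a constant $c$ as small as is needed, which is feasible only because the modern explicit prime estimates carry much smaller implied constants than those available to Robin in 1984; one must bound the exponent surplus $E$ and verify that it does not disturb the passage from $\log p_k$ to $\log\log N$; and, the genuine engineering trade-off, one must choose $x_0$ large enough for all these estimates to be valid yet small enough that the residual finite check in (ii) stays within computational reach. Assembling the pieces with a single coherent choice of $x_0$ completes the proof.
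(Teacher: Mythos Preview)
The paper does not actually prove this theorem: its entire proof is the line ``See (\cite{Axler2017}, Theorem 1)'', i.e.\ a bare citation of Axler's original article. So there is no in-paper argument to compare against.

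That said, your sketch is the right architecture and is essentially what Axler does: reduce to colossally (or super-) abundant numbers, bound $\sigma(N)/N$ by the Mertens product $\prod_{p\le p_k}\frac{p}{p-1}$, feed in a Dusart-type explicit Mertens estimate whose error is one power of $\log x$ sharper than what Robin had in 1984, translate $\log p_k$ into $\log\log N$ via explicit $|\theta(x)-x|$ bounds, and finish with a finite computation that isolates the exceptional set $\mathcal{A}$. The only point where your write-up is a little loose is the bracketing step between consecutive colossally abundant numbers: to push the inequality from the CA skeleton to \emph{all} $n$ one needs the monotonicity/oscillation lemma (Robin's Proposition~1 and its refinements) rather than just ``$\log\log n$ varies negligibly''; in Axler's paper this is handled by working with the full sequence of superabundant numbers and a careful case split. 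If you want your sketch to stand on its own, that is the place to tighten.
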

\begin{proof}
See (\cite{Axler2017}, Theorem 1).
\end{proof}
Subsequently in 2021 Aoudjit, Berkane and Dusart \cite{Aoudjit} also sharpened Robin's upper bound.
\begin{theorem}
For every integer $n\geq3$ we have
\begin{equation*}
\sigma(n)< e^\gamma n\log\log n + \frac{0.3741n}{(\log\log n)^2}\,.
\end{equation*}
\end{theorem}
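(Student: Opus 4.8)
The cleanest route is to deduce the statement from Theorem~\ref{Axler2017}. For every integer $n\geq 3$ with $n\notin\mathcal{A}$, Axler's bound gives
\[
\sigma(n) < e^\gamma n\log\log n + \frac{0.1209\,n}{(\log\log n)^2},
\]
and since $0.1209<0.3741$ and $\log\log n>0$ for all $n\geq 3$, the (weaker) inequality of the present theorem follows at once for every such $n$. Hence the whole content of the statement is concentrated in the finitely many $n\in\mathcal{A}$ with $n\geq 3$, i.e.
\[
n\in\{4,6,8,10,12,16,18,20,24,30,36,48,60,72,120,180,240,360,2520\},
\]
and for each of these one simply computes $\sigma(n)$ from its prime factorization and compares with the right-hand side. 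This finite check is routine but delicate: these are exactly the moderately sized, highly composite numbers for which $\sigma(n)/n$ comes closest to $e^\gamma\log\log n$ (the extremal small cases forming the cluster around $n=12$, and $n=2520$ being the largest element of $\mathcal{A}$), so the constant $0.3741$ --- and, in the most careful formulation, the exact list of admissible $n$ --- is dictated by these few cases.

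A proof that does not invoke Axler follows the classical Robin-type method, and is essentially how a constant such as $0.3741$ is located. First one reduces to superabundant $n$: if $n$ is not superabundant there is a superabundant $n'\leq n$ with $\sigma(n')/n'\geq\sigma(n)/n$, and since $x\mapsto e^\gamma\log\log x+0.3741/(\log\log x)^2$ is increasing once $x$ exceeds a small absolute threshold, it suffices to treat superabundant $n$ together with a short initial segment handled by hand. For a superabundant $n=2^{a_2}3^{a_3}\cdots p_k^{a_{p_k}}$ one has $a_2\geq a_3\geq\cdots\geq a_{p_k}\geq 1$ and $\log n$ of the same order as Chebyshev's $\vartheta(p_k)$, so $\log\log n$ and $\log p_k$ differ only by a controlled amount. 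Writing
\[
\frac{\sigma(n)}{n}=\prod_{p\leq p_k}\Bigl(1-\frac1p\Bigr)^{-1}\cdot\prod_{p\leq p_k}\bigl(1-p^{-a_p-1}\bigr),
\]
one inserts the explicit Mertens estimate of Rosser--Schoenfeld into the first product (this is what produces the main term $e^\gamma\log p_k$), uses explicit bounds on $\vartheta(p_k)$ to pass from $\log p_k$ to $\log\log n$, and bounds the second product by $1$ from above and by $1-O(2^{-a_2})$ from below.

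The main obstacle is the explicit error bookkeeping in this last step: the Mertens error, the error in $\vartheta(p_k)$, and the defect of $\prod_{p\leq p_k}(1-p^{-a_p-1})$ from $1$ must be controlled simultaneously and their combined contribution shown to be smaller than $0.3741/(\log\log n)^2$. As always this forces a split into a tail $p_k\geq x_0$, settled by the analytic estimates, and a finite range $p_k<x_0$, settled by a direct check of the superabundant numbers in that range; the best constant is then whatever the worst case in the transition zone allows. In the Axler-based argument none of this arises --- only the finite computation over $\mathcal{A}$.
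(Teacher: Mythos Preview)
The paper does not give its own argument here; its entire ``proof'' is the citation ``See (\cite{Aoudjit}, Theorem~1.1).'' So there is no method to compare against beyond noting that the survey imports the result wholesale.

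Your strategy --- use Theorem~\ref{Axler2017} for $n\notin\mathcal{A}$ and dispose of the finitely many $n\in\mathcal{A}$, $n\geq 3$, by direct computation --- is natural, and the first half is correct. The gap is in the second half: you describe the finite check as ``routine but delicate'' and do not perform it, and in fact it \emph{fails}. Take $n=12$: here $\sigma(12)=28$, $\log\log 12\approx 0.9103$, and
\[
e^\gamma\cdot 12\cdot\log\log 12+\frac{0.3741\cdot 12}{(\log\log 12)^2}\;\approx\;19.46+5.42\;=\;24.88\;<\;28,
\]
so the stated inequality is violated. The same happens for $n=24,36,60,120$. More precisely, $\bigl(\sigma(12)/12-e^\gamma\log\log 12\bigr)(\log\log 12)^2\approx 0.590$, so no constant below roughly $0.59$ can make an inequality of this exact shape hold for all $n\geq 3$; this is precisely why Robin's Theorem~\ref{Robin1984} carries $\log\log n$ rather than $(\log\log n)^2$ in the denominator, with the constant $0.6483$ calibrated to $n=12$.

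The upshot is that the statement as printed in the survey cannot be correct for every $n\geq 3$: either the range of $n$ or the constant has been transcribed inaccurately from \cite{Aoudjit}. Your reduction to Axler plus a finite list is perfectly sound as a proof \emph{architecture}, but the architecture proves only what the finite check permits --- and here it does not permit the constant $0.3741$ down to $n=3$.
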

\begin{proof}
See (\cite{Aoudjit}, Theorem 1.1).
\end{proof}
Recently Axler \cite{Axler2023} improved the upper bound in Theorem \ref{Axler2017}.
\begin{theorem}
For every integer $n\geq5041$ we have
\begin{equation*}
\sigma(n)< e^\gamma n\log\log n + \frac{\alpha_0n}{(\log\log n)^2}\,,
\end{equation*}
where $\alpha_0=0.0094243\,e^\gamma=0.0167853\ldots$
\end{theorem}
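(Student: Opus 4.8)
The plan is to follow the standard route for Robin-type inequalities: first reduce to a single, well-understood family of integers, then to a one-variable inequality about primes, and finally bring in effective prime estimates. Call $n$ \emph{superabundant} if $\sigma(m)/m<\sigma(n)/n$ for all $m<n$, and write the target as $\sigma(n)/n<G(\log n)$ with $G(y)=e^{\gamma}\log y+\alpha_0/(\log y)^{2}$. Since
\begin{equation*}
G'(y)=\frac{1}{y}\Bigl(e^{\gamma}-\frac{2\alpha_0}{(\log y)^{3}}\Bigr)>0\qquad\text{whenever }\log y>0.27,
\end{equation*}
the function $G$ is increasing on the whole relevant range, so if some $n\ge 5041$ were a counterexample then the least $m$ with $\sigma(m)/m\ge\sigma(n)/n$ would be superabundant, $\le n$, and still a counterexample. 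Hence it suffices to prove the inequality for superabundant $n$ (plus a direct check of the finitely many small values left over after the analytic step). For a superabundant $n$ with largest prime factor $q$ one has $n=\prod_{p\le q}p^{a_p}$ with $a_2\ge a_3\ge\cdots\ge a_q\ge 1$, whence, writing $\vartheta(x)=\sum_{p\le x}\log p$,
\begin{equation*}
\frac{\sigma(n)}{n}=\prod_{p\le q}\frac{p}{p-1}\bigl(1-p^{-a_p-1}\bigr)\le\prod_{p\le q}\frac{p}{p-1},\qquad \log n=\sum_{p\le q}a_p\log p\ \ge\ \vartheta(q).
\end{equation*}
Because $G$ is increasing it is then enough to show $\prod_{p\le q}\frac{p}{p-1}<e^{\gamma}\log\vartheta(q)+\alpha_0/(\log\vartheta(q))^{2}$ for all primes $q$ beyond the threshold of the finite verification.

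For small $q$ this last inequality is false as stated — already at $n=5040$, i.e.\ $q=7$, the product $\prod_{p\le q}\frac{p}{p-1}$ overshoots the right-hand side substantially, which is exactly why $n\le 5040$ is excluded. There one must retain the factor $\prod_{p\le q}(1-p^{-a_p-1})$, which is genuinely bounded away from $1$ for small $q$ (it tends to $1$ only as $q\to\infty$, since the primes carrying exponent $1$ lie above roughly $\sqrt q$), and treat the comparatively sparse superabundant numbers individually up to an explicit bound; this is also what fixes the admissible constant near its extremal value.

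For large $q$ the displayed prime inequality is handled with explicit analytic number theory: an upper bound for $\prod_{p\le q}\frac{p}{p-1}$ of Rosser--Schoenfeld/Dusart shape $e^{\gamma}\log q\,\bigl(1+c_1/(\log q)^{2}+\cdots\bigr)$, a matching lower bound for $\vartheta(q)$, and the identity $\gamma=B+\sum_{p}\sum_{k\ge 2}\tfrac{1}{kp^{k}}$ linking Mertens' constant $B$ to $\gamma$. The difficulty is that the available slack is only $O\bigl((\log\vartheta(q))^{-2}\bigr)$, so every error term must be controlled to that precision — which forces the use of the verification of the Riemann Hypothesis up to a large height (yielding $\vartheta(q)-q=O(\sqrt q\,(\log q)^{2})$ for $q$ below a huge explicit bound) in the intermediate range, and an explicit zero-free region (Ford, Mossinghoff--Trudgian, \dots) for the genuinely large $q$. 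The constant $\alpha_0=0.0094243\,e^{\gamma}$ emerges as essentially the optimal value for which the one-variable inequality survives across all three ranges, the extremal configurations being superabundant numbers just above $5040$.

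The main obstacle is precisely the intermediate range of $q$: too large to enumerate superabundant numbers, yet not large enough for the unconditional zero-free region by itself to drive the error below $(\log\vartheta(q))^{-2}$ — the known explicit constant in that region being small, the corresponding threshold is astronomically large. Closing this gap, by combining the numerical RH verification with effective bounds on $\vartheta(q)-q$ and the rigid structure of superabundant numbers, is the crux, and it is also what pins down the precise value of $\alpha_0$. Since the statement sharpens Theorem~\ref{Axler2017}, one may additionally bootstrap from that result on $\mathbb{N}\setminus\mathcal{A}$ to confine the genuinely new work to the narrow range where the improved constant is the binding constraint.
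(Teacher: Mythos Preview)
The paper is a survey and does not prove this theorem at all: its ``proof'' is the single line ``See (\cite{Axler2023}, Theorem~2).'' So there is no argument in the paper to compare against; the content lives entirely in Axler's 2023 paper.

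Your outline is a reasonable high-level description of the Robin--Axler strategy (reduce to an extremal family of integers, bound $\sigma(n)/n$ by the Mertens product over $p\le q$, and feed in explicit estimates for $\vartheta(q)$ and $\prod_{p\le q}(1-1/p)^{-1}$), and in that sense it is consonant with what the cited source actually does. Two remarks, however. First, the reduction in practice is to \emph{colossally abundant} numbers rather than merely superabundant ones; the extra rigidity (each is determined by a single real parameter, and consecutive ones differ by a single prime) is what makes the ``intermediate range'' tractable and lets one pin down the constant. Your superabundant reduction is logically valid, but you would find the subsequent casework much harder to close with the stated $\alpha_0$. Second, and more importantly, what you have written is a plan, not a proof: you describe the three ranges, name the inputs (RH verification to large height, explicit zero-free region, Dusart-type bounds), and assert that $\alpha_0=0.0094243\,e^{\gamma}$ ``emerges'' and that ``closing this gap\dots is the crux'' --- but none of the quantitative work that actually determines $\alpha_0$ is carried out. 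Since the entire content of the theorem is the specific numerical constant, a proof must exhibit the explicit inequalities and the explicit thresholds at which one regime hands off to the next; without those computations the proposal is an accurate roadmap but not a proof.
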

\begin{proof}
See (\cite{Axler2023}, Theorem 2).
\end{proof}
Also recently Axler and  Nicolas \cite{Axler-Nicolas2022} established the next theorem.
\begin{theorem}
For every integer $n\geq \exp(26 318 064 420)$ we have
\begin{equation*}
\frac{\sigma(n)}{n}< e^\gamma \log\log n+\frac{\alpha_0}{(\log\log n)^2}-\frac{2\sqrt{2}e^\gamma}{\sqrt{\log n}}
+\frac{4.143e^\gamma}{\sqrt{\log n}\log\log n}\,,
\end{equation*}
where  $\alpha_0=0.0094243\,e^\gamma=0.0167853\ldots$
\end{theorem}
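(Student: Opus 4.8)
\emph{Proof strategy.}
The plan is to run the classical Alaoglu--Erd\H{o}s--Robin--Nicolas method: reduce to extremal numbers and then evaluate $\sigma(n)/n$ there by combining an effective Mertens' third theorem with the fine structure of those numbers. Recall that $n$ is \emph{superabundant} if $\sigma(m)/m<\sigma(n)/n$ for all $m<n$, and \emph{colossally abundant} (CA) if $\sigma(n)/n^{1+\varepsilon}\geq\sigma(m)/m^{1+\varepsilon}$ for all $m$ and some $\varepsilon>0$. Writing $G(x)$ for the right-hand side of the claimed inequality, one first checks that $G$ is increasing past the stated threshold and that $\sigma(n)/n$ attains its record values at superabundant numbers; hence it suffices to prove the inequality for superabundant $n$, and then, by the standard comparison $\sigma(m)/m\leq(\sigma(N)/N)(m/N)^{\varepsilon}$ relating a superabundant number $m$ to the CA numbers bracketing it, for CA numbers $N$ with $\log N\geq 26\,318\,064\,420$ (after lowering the threshold by a bounded amount to absorb the integers lying between $\exp(26\,318\,064\,420)$ and the nearest superabundant number).

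So fix such an $N=\prod_{p\leq q}p^{a_p}$ with largest prime $q$ and $a_2\geq\dots\geq a_q=1$, put $\theta(x)=\sum_{p\leq x}\log p$, and start from
\begin{equation*}
\frac{\sigma(N)}{N}=\prod_{p\leq q}\left(1-\frac1p\right)^{-1}\prod_{p\leq q}\left(1-\frac1{p^{a_p+1}}\right).
\end{equation*}
Three inputs are needed. (i) An effective Mertens' theorem: $\prod_{p\leq q}(1-1/p)^{-1}=e^{\gamma}\log q\,(1+r(q))$ with $r(q)$ explicitly bounded in terms of $|\theta(q)-q|$. (ii) The CA description of the exponents shows that $a_p=1$ holds exactly for the primes with $\sqrt{2q}\,(1+o(1))<p\leq q$, so that by effective bounds for $\pi$,
\begin{equation*}
1-\prod_{p\leq q}\left(1-\frac1{p^{a_p+1}}\right)\;\geq\;\sum_{\sqrt{2q}<p\leq q}\frac1{p^{2}}\;=\;\frac{\sqrt2}{\sqrt q\,\log q}\,(1+o(1)),
\end{equation*}
with an explicit error. (iii) The same description gives $\log N=\theta(q)+E(q)$, where $E(q)=\sum_{p\leq q}(a_p-1)\log p=\sqrt{2q}\,(1+o(1))$ — the primes with $a_p=2$ contribute the main term — again a two-sided estimate with explicit error.

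The decisive manipulation is to set $\log N=q+\Delta$ with $\Delta=(\theta(q)-q)+E(q)$ and expand $\log q=\log\log N-\Delta/\log N+O(\Delta^{2}/\log^{2}N)$: the occurrence of $\theta(q)-q$ coming from (i) then cancels to first order against the one coming from the passage $\log q\mapsto\log\log N$, and together with (ii) and (iii) one is left with
\begin{equation*}
\frac{\sigma(N)}{N}\;\leq\;e^{\gamma}\log\log N-e^{\gamma}\,\frac{E(q)}{q}-\frac{\sqrt2\,e^{\gamma}}{\sqrt q}+R(q)\;=\;e^{\gamma}\log\log N-\frac{2\sqrt2\,e^{\gamma}}{\sqrt q}+R(q).
\end{equation*}
Since $\theta(q)=q\,(1+o(1))$ forces $q\leq\log N\,(1+o(1))$, this yields $2\sqrt2\,e^{\gamma}/\sqrt q\geq 2\sqrt2\,e^{\gamma}/\sqrt{\log N}-o(1/\sqrt{\log N})$, and the entire problem reduces to showing that the remainder $R(q)$ — built from the effective error in Mertens' theorem, the floors in the $a_p$, the squared tail of $\sum_{p\leq q}p^{-a_p-1}$, and the superabundant-to-CA comparison — does not exceed $\alpha_0/(\log\log N)^{2}+4.143\,e^{\gamma}/(\sqrt{\log N}\,\log\log N)$ once $\log N\geq 26\,318\,064\,420$. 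Axler's recent unconditional bound $\sigma(n)/n<e^{\gamma}\log\log n+\alpha_0/(\log\log n)^{2}$ for $n\geq5041$ is a convenient tool for keeping several of these estimates manageable.

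The genuine obstacle is this last piece of bookkeeping. Unlike an ordinary Robin-type upper bound, the theorem calls for \emph{two-sided} control: it is precisely the lower bounds on $E(q)$ and on $\sum_{\sqrt{2q}<p\leq q}p^{-2}$ that produce the negative term $-2\sqrt2\,e^{\gamma}/\sqrt{\log n}$, and the constant $\sqrt2$ only appears after the prime near $\sqrt{2q}$ at which $a_p$ drops to $1$ has been located with sufficient precision. Furthermore the residual influence of $\theta(q)-q$ has to be forced below $4.143\,e^{\gamma}/(\sqrt{\log n}\,\log\log n)$; this works because $|\theta(x)-x|$ is extremely small for $x$ up to the height to which the Riemann hypothesis has been verified numerically — which is where $q$ lies when $n$ is close to the threshold — whereas for much larger $n$ the two correction terms already dominate the super-polynomially small errors supplied by the classical zero-free region. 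Reconciling all of these errors with the available slack, with explicit constants and starting from exactly the stated value, is what makes the constant $\exp(26\,318\,064\,420)$ emerge and accounts for essentially all of the work.
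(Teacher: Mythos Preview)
The paper is a survey and does not prove this theorem at all: its entire ``proof'' is the single line ``See (\cite{Axler-Nicolas2022}, Theorem 1.3).'' So there is no argument in the paper to compare your proposal against.

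That said, your sketch is a faithful outline of the Alaoglu--Erd\H{o}s--Robin--Nicolas machinery that the cited Axler--Nicolas paper actually uses: reduction to colossally abundant $N$, the product decomposition of $\sigma(N)/N$, effective Mertens, the identification of the cutoff near $\sqrt{2q}$ where $a_p$ drops to $1$, and the cancellation of $\theta(q)-q$ in passing from $\log q$ to $\log\log N$. You correctly identify where the $-2\sqrt{2}e^{\gamma}/\sqrt{\log n}$ term comes from (the combined contribution of $E(q)\sim\sqrt{2q}$ and $\sum_{\sqrt{2q}<p\leq q}p^{-2}\sim\sqrt{2}/(\sqrt{q}\log q)$), and you are right that the numerical threshold emerges from reconciling the explicit error terms against the partial verification of RH. What you have written is an accurate roadmap rather than a proof --- the explicit constants, the two-sided bounds on $E(q)$, and the precise handling of the superabundant-to-CA comparison are all deferred --- but as a strategy it matches the source the paper cites.
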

\begin{proof}
See (\cite{Axler-Nicolas2022}, Theorem 1.3).
\end{proof}

\section{The Euler function {\boldmath$\varphi(n)$}}
\indent

Obviously for $n\geq2$ we have that
\begin{equation*}
\varphi(n)\leq n-1\,.
\end{equation*}
It is well known that for all composite numbers $n$ we have
\begin{equation*}
\varphi(n)\leq n-\sqrt{n}\,.
\end{equation*}
The proof follows from the formula of $\varphi(n)$ and the fact that the least prime
divisor of a composite number $n$ is not greater than $\sqrt{n}$.
As a direct consequence of the formula of $\varphi(n)$, we have
\begin{equation*}
\varphi(mn)\geq\varphi(m)\,\varphi(n)\quad \mbox{ for all } \quad  m, n = 1, 2, 3, \ldots
\end{equation*}
In 1940 Popoviciu \cite{Popoviciu} derived the result
\begin{equation*}
(\varphi(mn))^2\leq \varphi(m^2)\,\varphi(n^2)  \quad \mbox{ for all } \quad  m, n = 1, 2, 3, \ldots
\end{equation*}
In 1962 Rosser and Schoenfeld \cite{Rosser1962} proved the theorem.
\begin{theorem}
For every integer $n\geq3$ we have
\begin{equation*}
\frac{n}{\varphi(n)}< e^\gamma \log\log n + \frac{2.50637}{\log\log n}\,.
\end{equation*}
\end{theorem}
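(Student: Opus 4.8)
The plan is to reduce the inequality to the case where $n$ is a primorial $N_k=p_1p_2\cdots p_k$ (the product of the first $k$ primes), and then to apply an explicit form of Mertens' third theorem together with explicit estimates for Chebyshev's function $\vartheta(x)=\sum_{p\le x}\log p$.

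\emph{Step 1: reduction to primorials.} From $\dfrac{n}{\varphi(n)}=\prod_{p\mid n}\Bigl(1-\tfrac1p\Bigr)^{-1}=\prod_{p\mid n}\dfrac{p}{p-1}$ and the monotonicity of $t\mapsto\tfrac{t}{t-1}$, if $n$ has exactly $k$ distinct prime divisors $q_1<\cdots<q_k$ then $q_i\ge p_i$ gives $\dfrac{n}{\varphi(n)}\le\dfrac{N_k}{\varphi(N_k)}$, while $n\ge q_1\cdots q_k\ge N_k$ gives $\log\log n\ge\log\log N_k$. Write $g(t)=e^\gamma t+\tfrac{2.50637}{t}$; its minimum over $t>0$ is $2\sqrt{2.50637\,e^\gamma}>4$, attained at $t^*=\sqrt{2.50637/e^\gamma}\approx1.19$, and $g$ is increasing for $t>t^*$. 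If $\tfrac{n}{\varphi(n)}\le4$ the bound is immediate because $g(\log\log n)\ge2\sqrt{2.50637\,e^\gamma}>4$. Otherwise $\tfrac{N_k}{\varphi(N_k)}\ge\tfrac{n}{\varphi(n)}>4$, which forces $k\ge4$ (since $\tfrac{N_3}{\varphi(N_3)}=\tfrac{15}{4}<4$); then $N_k\ge210$ and $\log\log N_k\ge\log\log210>t^*$, so $g$ is increasing on $[\log\log N_k,\log\log n]$ and it suffices to prove $\dfrac{N_k}{\varphi(N_k)}<g(\log\log N_k)$ for every $k\ge4$.

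\emph{Step 2: the primorial case.} Here $\dfrac{N_k}{\varphi(N_k)}=\prod_{p\le p_k}\Bigl(1-\tfrac1p\Bigr)^{-1}$, so one invokes an explicit Mertens inequality of the form
\begin{equation*}
\prod_{p\le x}\Bigl(1-\tfrac1p\Bigr)^{-1}<e^\gamma\Bigl(\log x+\frac{1}{\log x}\Bigr)\qquad(x\ge x_0),
\end{equation*}
obtained by partial summation from explicit bounds for $\sum_{p\le x}\tfrac{\log p}{p}$ and $\vartheta(x)$. With $x=p_k$ this yields $\dfrac{N_k}{\varphi(N_k)}<e^\gamma\log p_k+\dfrac{e^\gamma}{\log p_k}$. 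Since $\log N_k=\vartheta(p_k)$, we have $\log\log N_k=\log\vartheta(p_k)$; an explicit lower bound $\vartheta(x)>(1-\varepsilon(x))x$ with $\varepsilon(x)\to0$ (a quantitative form of the prime number theorem) gives $\log p_k<\log\log N_k+2\varepsilon(p_k)$ once $k$ is large, and therefore
\begin{equation*}
\frac{N_k}{\varphi(N_k)}<e^\gamma\log\log N_k+2e^\gamma\varepsilon(p_k)+\frac{e^\gamma}{\log p_k}.
\end{equation*}
Because $\log\log N_k\sim\log p_k$ and $e^\gamma\approx1.781<2.50637$, the remaining error is at most $\dfrac{2.50637}{\log\log N_k}$ as soon as $2e^\gamma\,\varepsilon(p_k)\log p_k\le2.50637-e^\gamma-o(1)$, which holds for all sufficiently large $p_k$ since $\varepsilon(x)\log x\to0$. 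The finitely many remaining primorials ($4\le k\le k_0$) are checked numerically.

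\emph{Main obstacle.} The mechanism above is routine; the difficulty is entirely in the constants. The inequality is close to sharp for moderate primorials — already $N_5$ and $N_6$ leave a margin of only a few hundredths — so the numerical verification in the intermediate range must be done with care, and the explicit Mertens estimate of Step 2 together with the explicit form of $\varepsilon(x)$ must be strong enough to bridge the gap between the finite check and the asymptotic regime. Supplying sharp explicit bounds for $\vartheta$, $\psi$ and $\pi$ is exactly what makes this delicate; the full argument is carried out in \cite{Rosser1962}.
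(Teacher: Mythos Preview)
Your proposal is correct and aligns with the paper's approach: the paper simply refers to \cite{Rosser1962}, Theorem~15, and what you have written is precisely a sketch of the Rosser--Schoenfeld argument (reduction to primorials, explicit Mertens product bound, explicit $\vartheta$-estimates, and a finite numerical check). Your outline in fact goes further than the paper, which only gives the citation.
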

\begin{proof}
See (\cite{Rosser1962}, Theorem 15).
\end{proof}
In 1969 Hatalov\'{a} and \v{S}al\v{a}t \cite{Hatalova} showed that
\begin{equation*}
\varphi(n)>\frac{\log2}{2}\frac{n}{\log n}
\end{equation*}
for $n\geq3$.
Recently Axler and  Nicolas \cite{Axler-Nicolas2022} established the result.
\begin{theorem}\label{Axler-Nicolas2022}
For every integer $n\geq \exp(12530479255.595931)$ we have
\begin{equation*}
\frac{n}{\varphi(n)}< e^\gamma \log\log n + \frac{\alpha_0}{(\log\log n)^2}\,,
\end{equation*}
where $\alpha_0=0.0094243\,e^\gamma= 0.0167853\ldots$
\end{theorem}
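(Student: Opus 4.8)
The plan is to follow the classical method of Nicolas, reducing first to primorials and then inserting the sharpest available explicit prime estimates. Write $N_k=p_1p_2\cdots p_k$ for the $k$-th primorial and recall that $n/\varphi(n)=\prod_{p\mid n}(1-1/p)^{-1}$ depends only on the set of prime divisors of $n$. If $n$ has $k+1$ distinct prime factors then $n\ge N_{k+1}$; hence every $n$ with $N_k\le n< N_{k+1}$ has at most $k$ distinct prime factors $q_1<\cdots<q_j$ (so $j\le k$ and $q_i\ge p_i$), and since $t\mapsto t/(t-1)$ is decreasing this gives $n/\varphi(n)=\prod_{i=1}^{j}\frac{q_i}{q_i-1}\le\prod_{i=1}^{k}\frac{p_i}{p_i-1}=N_k/\varphi(N_k)$. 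Moreover $x\mapsto e^\gamma\log\log x+\alpha_0(\log\log x)^{-2}$ has derivative $\frac{1}{x\log x}\bigl(e^\gamma-\frac{2\alpha_0}{(\log\log x)^3}\bigr)>0$ for $x\ge 16$, so the right-hand side is increasing throughout the relevant range and it suffices to prove
\begin{equation*}
\frac{N_k}{\varphi(N_k)}<e^\gamma\log\log N_k+\frac{\alpha_0}{(\log\log N_k)^2}
\end{equation*}
for every primorial $N_k\ge\exp(12530479255.595931)$, i.e.\ for every prime $p_k$ beyond a threshold of order $1.25\cdot 10^{10}$.

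For the primorial inequality I would work with logarithms. Writing $\theta(x)=\sum_{p\le x}\log p$ for the Chebyshev function, we have $\log N_k=\theta(p_k)$, so $\log\log N_k=\log\theta(p_k)=\log p_k+\log(\theta(p_k)/p_k)$, and by partial summation against $\theta$,
\begin{equation*}
\log\frac{N_k}{\varphi(N_k)}=\sum_{p\le p_k}\log\frac{p}{p-1}=\gamma+\log\log N_k+R_1(p_k)+R_2(p_k)+R_3(p_k),
\end{equation*}
where $R_1$ is the remainder in Mertens' third theorem, equal after summation by parts to $O\!\bigl(|\theta(x)-x|/(x\log x)\bigr)+O\!\bigl(\int_x^\infty|\theta(t)-t|\,t^{-2}(\log t)^{-1}\,dt\bigr)$; $R_2=\sum_{p>p_k}\bigl(\tfrac1p+\log(1-\tfrac1p)\bigr)=O(1/(p_k\log p_k))$ gathers the higher prime powers and is entirely negligible; and $R_3=-\log(\theta(p_k)/p_k)=O(|\theta(x)-x|/x)$. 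Exponentiating, $N_k/\varphi(N_k)=e^\gamma\log\log N_k\,(1+E(p_k))$ with $E$ built from $R_1,R_2,R_3$ and the quadratic corrections of $\exp$ and $\log(1+\cdot)$, and the target reduces to showing $E(p_k)<\alpha_0\bigl(e^\gamma(\log\log N_k)^3\bigr)^{-1}$ (the inequality being trivial when $E\le 0$).

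The arithmetic core is then a supply of explicit bounds on $\theta(x)-x$ valid for all $x$ above the threshold. In the lower part of the range one exploits the numerical verification of the Riemann Hypothesis up to a large height, which yields both $\theta(x)<x$ up to a very large bound and estimates of conditional (RH) strength, $|\theta(x)-x|=O(\sqrt{x}\log^2 x)$, throughout that part, and hence sharp control of $R_1$ and $R_3$; beyond it one uses the best unconditional explicit prime-number estimates (Dusart, Trudgian and subsequent refinements). Substituting these into $E$ and simplifying turns $E(p_k)<\alpha_0(e^\gamma(\log\log N_k)^3)^{-1}$ into an elementary one-variable inequality in $t=\log p_k$ (equivalently in $\log\log N_k$), which holds exactly when $t$ exceeds the stated bound; the constant $12530479255.595931$ is the optimized root.

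I expect this last step --- the explicit bookkeeping --- to be the main obstacle. At the threshold the budget $\alpha_0/(\log\log N_k)^2\approx 3\cdot 10^{-5}$ is only a modest constant factor above the genuine size of the dominant error terms $e^\gamma\log\log N_k\cdot|\theta(p_k)-p_k|/p_k$ and the associated integral, so crude Mertens-type inequalities such as $\prod_{p\le x}(1-1/p)^{-1}<e^\gamma\log x\,(1+(\log x)^{-2})$ are hopelessly lossy; one is forced to use the sharpest currently known explicit prime estimates and to optimize the split of the range into a verified part and an unconditional part. One must also check that $R_2$, the reduction step, and all $O\!\bigl((|\theta(x)-x|/x)^2\bigr)$ terms coming from expanding $\log(1+\cdot)$ and $\exp$ are comfortably dominated, so that no hidden loss shifts the threshold.
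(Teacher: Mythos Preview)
The paper's own ``proof'' is a bare citation: \emph{See (\cite{Axler-Nicolas2022}, Theorem 1.2)}. This is a survey article, and the theorem is not proved here but merely attributed to Axler and Nicolas. So there is no in-paper argument to compare your proposal against.

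That said, your outline is the correct strategy and is, in broad strokes, exactly what Axler and Nicolas do in the cited reference: reduce to primorials via the monotonicity of $t/(t-1)$ and of the right-hand side, rewrite $N_k/\varphi(N_k)$ through Mertens and $\log\log N_k=\log\theta(p_k)$, and then feed in the sharpest available explicit bounds on $\theta(x)-x$ (partial RH verification plus unconditional Dusart/Trudgian-type estimates) to locate the threshold. Your decomposition into $R_1,R_2,R_3$ is the right bookkeeping.

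What you have, however, is a plan rather than a proof. You explicitly flag the explicit numerical step as ``the main obstacle'' and do not carry it out; the specific constant $12530479255.595931$ is precisely the output of that computation, and nothing in your write-up establishes it. In particular you would still need to quote the exact explicit inequalities for $|\theta(x)-x|$ with their ranges of validity and constants, verify that the resulting one-variable inequality in $t=\log p_k$ holds from the claimed threshold onward, and check the finitely many primorials near the boundary. Until that analysis is done, the argument is incomplete---correct in approach, but not yet a proof of the stated theorem with its stated threshold.
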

\begin{proof}
See (\cite{Axler-Nicolas2022}, Theorem 1.2).
\end{proof}
\begin{remark}
Theorem \ref{Axler-Nicolas2022} is not true only for one $n$ pointed in \cite{Axler-Nicolas2022}.
\end{remark}

\section{The Dedekind function {\boldmath$\psi(n)$}}
\indent

It is well known that
\begin{equation*}
\psi(n)\geq n+1\,.
\end{equation*}
In 2011 Sol\'{e} and Planat \cite{Sole} established the following theorem.
\begin{theorem}
For  $n>30$ we have
\begin{equation*}
\psi(n)<e^\gamma n\log\log n\,.
\end{equation*}
\end{theorem}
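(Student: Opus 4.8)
The plan is to relate $\psi(n)/n$ to the more familiar ratio $n/\varphi(n)$ and then invoke the classical Rosser–Schoenfeld-type bound already quoted in the Euler-function section. The key algebraic observation is that for $n=p_1^{a_1}\cdots p_r^{a_r}$ we have
\begin{equation*}
\frac{\psi(n)}{n}=\prod_{i=1}^{r}\left(1+\frac{1}{p_i}\right)=\prod_{i=1}^{r}\frac{p_i^2-1}{p_i^2}\cdot\prod_{i=1}^{r}\frac{p_i}{p_i-1}=\frac{n}{\varphi(n)}\prod_{i=1}^{r}\left(1-\frac{1}{p_i^2}\right)\,.
\end{equation*}
Since $\prod_i(1-p_i^{-2})<\prod_{p}(1-p^{-2})=1/\zeta(2)=6/\pi^2$, this already gives the clean bound $\psi(n)/n<(6/\pi^2)\,n/\varphi(n)$. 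So it suffices to show $(6/\pi^2)\,n/\varphi(n)<e^\gamma\log\log n$ for all $n>30$, i.e. $n/\varphi(n)<(\pi^2 e^\gamma/6)\log\log n$.

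First I would settle the large-$n$ range using an effective upper bound for $n/\varphi(n)$. The Rosser–Schoenfeld theorem quoted above gives $n/\varphi(n)<e^\gamma\log\log n+2.50637/\log\log n$ for $n\ge3$; multiplying by $6/\pi^2\approx0.6079$ we need
\begin{equation*}
\frac{6}{\pi^2}e^\gamma\log\log n+\frac{6}{\pi^2}\cdot\frac{2.50637}{\log\log n}<e^\gamma\log\log n\,,
\end{equation*}
which rearranges to $(1-6/\pi^2)e^\gamma(\log\log n)^2>(6/\pi^2)(2.50637)$, i.e. $(\log\log n)^2>$ an explicit constant (roughly $2.1$), hence $\log\log n$ bigger than about $1.46$, i.e. $n\gtrsim e^{4.3}\approx 74$. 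So for $n\ge75$ or so the inequality follows immediately from the already-cited bound. The remaining finite range $31\le n\le 74$ I would dispatch by direct computation: for each such $n$ compute $\psi(n)$ and check $\psi(n)<e^\gamma n\log\log n$ numerically; since $\log\log n$ is smallest near $n=31$ these are the tightest cases, and one verifies the inequality holds (the squarefree $n$ with many small prime factors, like $n=2\cdot3\cdot5=30$, are exactly the extremal ones, which is why the bound fails at $30$ and first holds just above it).

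The main obstacle is making the ``large-$n$'' threshold and the finite check mesh cleanly: one must choose the cutoff so that the effective inequality from Rosser–Schoenfeld genuinely holds above it, and then the finite verification below it is a routine but non-empty computation. A secondary subtlety is that the bound is genuinely sharp — it fails at $n=30$ — so there is no slack to waste; one cannot be sloppy with the constant $6/\pi^2$ versus $\prod_{p\le P}(1-p^{-2})$ for the small primes actually dividing $n$. In fact a cleaner route for the finite range is to note that $\psi$ is multiplicative and $\psi(n)/n$ depends only on the radical of $n$, so one only needs to check squarefree $n$ in $[31,74]$, and among those only the ones divisible by $2,3$ and possibly $5,7$ are close to extremal; this reduces the computation to a handful of cases. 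I would present the argument in exactly that order: the factorization identity, the reduction to $n/\varphi(n)$, the asymptotic range via the quoted theorem, and finally the short table for small $n$.
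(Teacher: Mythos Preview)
Your reduction contains a genuine error that breaks the argument. You claim
\[
\prod_{p\mid n}\Bigl(1-\frac{1}{p^{2}}\Bigr)<\prod_{p}\Bigl(1-\frac{1}{p^{2}}\Bigr)=\frac{6}{\pi^{2}},
\]
but the inequality goes the other way: every factor lies in $(0,1)$, so a \emph{finite} subproduct is \emph{larger} than the full infinite product. For instance $n=2$ gives $1-\tfrac14=\tfrac34>6/\pi^{2}\approx0.608$. Consequently $(6/\pi^{2})\,n/\varphi(n)$ is a \emph{lower} bound for $\psi(n)/n$, not an upper bound, and your reduction to the Rosser--Schoenfeld estimate for $n/\varphi(n)$ collapses.

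This is not a cosmetic slip that can be patched by replacing $6/\pi^{2}$ with the trivial bound $1$: the resulting inequality $n/\varphi(n)<e^{\gamma}\log\log n$ is known to fail for infinitely many $n$ (primorials; this is precisely the content of Nicolas's criterion). A correct proof must exploit the actual gap between $\psi(n)/n$ and $n/\varphi(n)$. The paper itself does not supply an argument---it simply cites Sol\'{e} and Planat---but their route is essentially the one you gesture at in your final paragraph: since $\psi(n)/n$ depends only on the radical of $n$, the extremal integers are the primorials $N_{k}=p_{1}\cdots p_{k}$, and one bounds $\prod_{p\le p_{k}}(1+1/p)$ directly via explicit Mertens-type estimates (yielding a main term $(6/\pi^{2})e^{\gamma}\log\log N_{k}$, strictly below $e^{\gamma}\log\log N_{k}$) together with a finite check. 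Your instinct to reduce to squarefree $n$ and to primorials is right; the mistake is trying to route the estimate through $n/\varphi(n)$ with the factor on the wrong side.
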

\begin{proof}
See (\cite{Sole}, Corollary 2).
\end{proof}
Further in 2014 Mortici \cite{Mortici} showed that for $k\geq2$ we have
\begin{equation*}
(\psi(mn))^k\geq\psi(m^k)\,\psi(n^k)  \quad \mbox{ for all } \quad  m, n = 1, 2, 3, \ldots
\end{equation*}
Subsequently in 2016 S\'{a}ndor \cite{Sandor2016} among other results refined the above inequality,
proving that for any integers $m, n\geq1$ and $l\geq k\geq2$ one has
\begin{align*}
&(\psi(mn))^k\geq (mn)^{k-2}(\psi(mn))^2\geq\psi(m^k)\,\psi(n^k)\,,\\
&(\psi(mn))^k\geq (mn)^{l-k}\psi(m^l)\,\psi(n^l)\,,\\
&(mn)^{k-1}\frac{\psi(mn)}{\psi(m)\psi(n)}\leq(mn)^{k-1}\leq\frac{(\psi(mn))^k}{\psi(m)\psi(n)}\leq (\psi(m)\psi(n))^{k-1}\,.
\end{align*}
Other results concerning inequalities with $\psi(n)$ can be found in \cite{Sandor2005}.

\section{The prime-counting function {\boldmath$\pi(x)$}}

\subsection{Chebyshev-type inequalities for primes}
\indent

One of the first works on the function $\pi(x)$ is due to Chebyshev \cite{Chebyshev}.
He proved that for every sufficiently large $x$ the inequalities
\begin{equation*}
c_1\frac{x}{\log x}<\pi (x)<c_2\frac{x}{\log x}
\end{equation*}
hold. Here $c_1=\log\Big(2^\frac{1}{2}3^\frac{1}{3}5^\frac{1}{5}30^{-\frac{1}{30}}\Big)\approx 0.921292022934$
and $c_2=\frac{6}{5}c_1\approx 1.10555042752$.
The result of Chebyshev has been sharpened many times.
In 1941 Rosser \cite{Rosser1941} proved that
\begin{equation*}
\frac{x}{\log x+2}<\pi (x)<\frac{x}{\log x-4}
\end{equation*}
for $x\geq55$.
Subsequently in 1962 Rosser and Schoenfeld \cite{Rosser1962} showed that
\begin{equation*}
\frac{x}{\log x}<\pi (x)<1.25506\frac{x}{\log x}\,,
\end{equation*}
where the left inequality holds for $x\geq17$ and the right inequality holds for $x>1$, and
\begin{equation*}
\frac{x}{\log x-0.5}<\pi(x)<\frac{x}{\log x-1.5}\,,
\end{equation*}
where the left inequality holds for $x\geq67$ and the right inequality holds for $x>e^\frac{3}{2}$, and
\begin{equation*}
\frac {x}{\log x}\left(1+\frac{1}{2\log x}\right)<\pi(x)<\frac {x}{\log x}\left(1+\frac{3}{2\log x}\right)\,,
\end{equation*}
where the left inequality holds for $x\geq59$ and the right inequality holds for $x>1$.
In his 1976 book, Apostol \cite{Apostol} gives elementary proof of the inequalities
\begin{equation*}
\frac{1}{6}\frac{x}{\log x}<\pi(x)<6\frac{x}{\log x}
\end{equation*}
for $x\geq 2$.
In 1982 Nair \cite{Nair} got
\begin{equation*}
\pi (x)>(\log2)\frac{x}{\log x}
\end{equation*}
for $x\geq4$. Afterwards in 1999 Panaitopol \cite{Panaitopol1999} deduced
\begin{equation*}
\pi(x)<\frac{x}{\log x-1.12}
\end{equation*}
for $x\geq4$. In the same year Dusart \cite{Dusart1999} proved that
\begin{equation*}
\frac{x}{\log x}\left(1+\frac{1}{\log x}+\frac{1.8}{\log^2x}\right)\leq\pi(x)
\leq\frac {x}{\log x}\left(1+\frac{1}{\log x}+\frac{2.51}{\log^2x}\right)\,,
\end{equation*}
where the left inequality holds for $x \geq32299$ and the right inequality holds for $x>355991$.
Next year Panaitopol \cite{Panaitopol2000} established that
\begin{equation*}
\frac{x}{\log x-1+\frac{1}{\sqrt{\log x}}}<\pi(x)<\frac{x}{\log x-1-\frac{1}{\sqrt{\log x}}}\,,
\end{equation*}
where the left inequality holds for $x\geq59$ and the right inequality holds for $x\geq6$.
In 2006 Hassani \cite{Hassani2006} showed that
\begin{equation*}
\frac{x}{\log x-1+\frac{1}{4\log x}}<\pi(x)<\frac{x}{\log x-1+\frac{151}{100\log x}}\,,
\end{equation*}
where the left inequality holds for $x\geq3299$ and the right inequality holds for $x\geq7$.
Further in 2010 Dusart \cite{Dusart2010} obtained
\begin{equation*}
\frac{x}{\log x-1}<\pi(x)<\frac{x}{\log x-1.1}\,,
\end{equation*}
where the left inequality holds for $x\geq5393$ and the right inequality holds for $x\geq601841$.
In 2016 Berkane and Dusart \cite{Berkane} proved that
\begin{equation*}
\frac{x}{\log x}\left(1+\frac{1}{\log x}+\frac{2}{\log^2x}+\frac {5.2}{\log^3x}\right)\leq\pi(x)
\leq\frac{x}{\log x}\left(1+\frac{1}{\log x}+\frac{2}{\log^2x}+\frac {7.57}{\log^3x}\right)\,,
\end{equation*}
where the left inequality holds for $x>3596143$ and the right inequality holds for $x>110118914$.
Subsequently Dusart \cite{Dusart2018} established that
\begin{equation*}
\frac {x}{\log x}\left(1+\frac{1}{\log x}\right)\leq\pi(x)\leq\frac {x}{\log x}\left(1+\frac{1.2762}{\log x}\right)\,,
\end{equation*}
where the left inequality holds for $x \geq599$ and the right inequality holds for $x>1$, and
\begin{equation*}
\frac{x}{\log x}\left(1+\frac{1}{\log x}+\frac{2}{\log^2x}\right)\leq\pi(x)
\leq\frac {x}{\log x}\left(1+\frac{1}{\log x}+\frac{2.53816}{\log^2x}\right)\,,
\end{equation*}
where the left inequality holds for $x \geq88789$ and the right inequality holds for $x>1$, and
\begin{equation*}
\pi(x)\leq\frac{x}{\log x}\left(1+\frac{1}{\log x}+\frac{2}{\log^2x}+\frac {7.59}{\log^3x}\right)
\end{equation*}
for $x>1$.
Other results concerning inequalities with $\pi(x)$ can be found in the papers of
Axler \cite{Axler2016}, \cite{Axler2018}, \cite{Axler2018a}, \cite{Axler2022}.
It was shown in 2016 by Trudgian \cite{Trudgian2016} that
\begin{equation*}
\big |\pi (x)-\operatorname {li}(x)\big |\leq 0.2795\frac {x}{(\log x)^{3/4}}e^{-\sqrt {\frac {\log x}{6.455}}}
\end{equation*}
for $x\geq 229$.
The result of Trudgian was sharpened by Mossinghoff and Trudgian \cite{Mossinghoff2015} with the explicit upper bound
\begin{equation*}
\big |\pi (x)-\operatorname {li}(x)\big |\leq 0.2593\frac {x}{(\log x)^{3/4}}e^{-\sqrt {\frac {\log x}{6.315}}}
\end{equation*}
for $x\geq 229$.
Ramanujan \cite{Berndt} is credited with the inequality
\begin{equation}\label{Ramanujan-inequality}
\pi^2(x)<\frac{ex}{\log x}\pi\left(\frac{x}{e}\right)\,,
\end{equation}
which is valid for all sufficiently large values of $x$.
The inequality \eqref{Ramanujan-inequality} is called Ramanujan's prime counting inequality.
A further contribution to this problem was proposed by Dudek and Platt \cite{Dudek2015} in 2015
when they proved that inequality \eqref{Ramanujan-inequality} holds unconditionally for all
$x\geq \textmd{exp(9658)}$.
The result of Dudek and Platt was then successively improved by Dudek \cite{Dudek2016} to $x\geq \textmd{exp(9394)}$,
by Axler \cite{Axler2018a}  to $x\geq \textmd{exp(9032)}$,
by Platt and Trudgian  \cite{Platt2021} to $x\geq \textmd{exp(3915)}$,
by Cully-Hugill and Johnston \cite{Cully-Hugill} to $x\geq \textmd{exp(3604)}$,
by Johnston and Yang  \cite{Johnston2022} to $x\geq \textmd{exp(3361)}$
and the best result up to now belongs to Axler \cite{Axler2022a} with $x\geq \textmd{exp(3158.442)}$.
In his recent paper Hassani \cite{Hassani2021} studies inequalities analogous to Ramanujan's inequality \eqref{Ramanujan-inequality}.

\subsection{The second Hardy-Littlewood conjecture}
\indent

One of the most famous hypotheses in number theory is the second Hardy-Littlewood conjecture,
which was published in 1923 in their paper \cite{Hardy-Littlewood}.
\begin{conjecture}\label{HLconjecture} (Second Hardy-Littlewood conjecture) Let $x, y \geq2$. Then
\begin{equation}\label{H-Lconjecture}
\pi(x+y)\leq\pi(x)+\pi(y)\,.
\end{equation}
\end{conjecture}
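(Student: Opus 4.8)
The plan is to reduce \eqref{H-Lconjecture} to a statement about primes in a single interval and then attack it with the explicit bounds for $\pi(x)$ collected in the previous subsection. Assume without loss of generality that $2\le x\le y$. Then \eqref{H-Lconjecture} is equivalent to
\begin{equation*}
\pi(x+y)-\pi(y)\le\pi(x),
\end{equation*}
that is, the number of primes in the interval $(y,x+y]$ of length $x$ should never exceed the number of primes in $[2,x]$. The guiding heuristic is that primes thin out, so an interval pushed far to the right ought to contain fewer primes than an interval of the same length anchored at the start; the entire difficulty is that this heuristic can fail \emph{locally}, because primes cluster into admissible constellations.

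First I would dispose of the ``balanced'' range. Using an explicit upper bound such as $\pi(t)<t/(\log t-1.1)$ for $t\ge 601841$ (Dusart) together with the matching lower bound $\pi(t)>t/(\log t-1)$ for $t\ge 5393$, one obtains, whenever $x$ and $y$ are both large and $y\le Cx$ for a fixed constant $C$,
\begin{equation*}
\pi(x+y)<\frac{x+y}{\log(x+y)-1.1}\le\frac{x}{\log x-1}+\frac{y}{\log y-1}<\pi(x)+\pi(y),
\end{equation*}
where the middle step is an elementary (if tedious) calculus estimate valid once the ratio $y/x$ is bounded. The remaining configurations are then handled separately: for $x$ bounded one needs $\pi(y+x)-\pi(y)\le\pi(x)$ for all $y$, which for each small value of $x$ follows from known results on prime gaps for $y$ of moderate size and from a Brun-Titchmarsh estimate for $\pi(y+x)-\pi(y)$ when $y$ is genuinely large; the small-$x$, small-$y$ corner is a finite computation; and the diagonal case $x=y$ reduces to the classical inequality $\pi(2x)\le 2\pi(x)$.

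The hard part — and the reason no such proof exists — is the clustering obstruction, which appears at two levels. First, Brun-Titchmarsh only delivers $\pi(y+x)-\pi(y)\le 2x/\log x$, which overshoots the target by a factor of about $2$; the parity barrier prevents removing that constant by purely sieve-theoretic means, so the unbalanced range cannot be closed by the naive argument above. Second, and decisively, Hensley and Richards proved that \eqref{H-Lconjecture} is \emph{incompatible} with the first Hardy-Littlewood $k$-tuples conjecture: there exist admissible tuples contained in an interval of length $x$ with strictly more than $\pi(x)$ members for all large $x$, and if every admissible tuple has infinitely many all-prime translates, then \eqref{H-Lconjecture} fails for infinitely many pairs $(x,y)$.

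Consequently, the honest outcome of this plan is not a proof of the stated conjecture but a family of conditional or restricted statements: the inequality for $y$ confined to a controlled range relative to $x$, the inequality outside a sparse exceptional set, or a clean equivalence with structural properties of prime constellations. I expect the genuine ``main obstacle'' to be exactly this: on the balance of current evidence the conjecture as stated is most likely \emph{false}, so any complete argument must either restrict the range of $(x,y)$ or pass through a disproof of the prime $k$-tuples conjecture, and neither route is presently available.
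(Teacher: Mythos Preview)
There is nothing to compare against: in the paper this statement is labelled a \emph{Conjecture}, and the very next sentence reads ``This is still an open problem.'' No proof is offered or claimed. Your write-up is likewise not a proof but an (accurate) account of why none is available --- the factor-of-two loss in Brun--Titchmarsh blocks the unbalanced range, and the Hensley--Richards theorem shows that \eqref{H-Lconjecture} is incompatible with the prime $k$-tuples conjecture, so on present evidence the inequality is expected to fail. Your conclusion therefore agrees with the paper's position; the only issue is that the assignment was ill-posed, since there is no proof in the paper for this item.
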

This is still an open problem.
Inequality \eqref{H-Lconjecture} has attracted the attention of numerous mathematicians, who presented various related
results. There exist many inequalities related to \eqref{H-Lconjecture}. We mention a few of them.
In 1934 Ishikawa \cite{Ishikawa} deduced
\begin{equation*}
\pi(x)+\pi(y)\leq\pi(xy)\,,
\end{equation*}
where $x, y\geq 5$.
Schinzel \cite{Schinzel} in 1961 proved that for all positive integers $x, y$ with $\min(x, y) \leq146$
the inequality \eqref{H-Lconjecture} holds.
Further in 1962 Rosser and Schoenfeld \cite{Rosser1962} established that
\begin{equation*}
\frac{3x}{5\log x}<\pi(2x)-\pi(x)<\frac{7x}{5\log x}\,,
\end{equation*}
where the left inequality holds for $x \geq\frac{41}{2}$ and the right inequality holds for $x>1$.
In the same year Segal \cite{Segal} proves the inequality \eqref{H-Lconjecture} for $x+y\leq101081$.
In 1966 Rosser and Schoenfeld \cite{Rosser1966} proved that Conjecture \ref{HLconjecture} is true when $x=y$.
This is the famous inequality
\begin{equation}\label{Landau-inequality}
\pi(2x)\leq2\pi(x)
\end{equation}
which is suggested by Landau \cite{Landau}.
Thanks to Karanikolov \cite{Karanikolov} we have that
\begin{equation*}
\pi(ax)\leq a\pi(x)
\end{equation*}
for $a\geq e^{1/4}$ and $x\geq364$.
In 1973 Montgomery and Vaughan \cite{Montgomery} using the large sieve obtained
\begin{equation*}
\pi(x+y)\leq\pi(x)+2\pi(y)
\end{equation*}
for $x\geq1$, $y\geq2$.
Udrescu \cite{Udrescu} in 1975 derived that if $0<a\leq1$ and $ax\leq y\leq x$ then
the inequality \eqref{H-Lconjecture} holds for $x$ and $y$ sufficiently large.
Afterwards in 1979 Panaitopol \cite{Panaitopol1979} proved that if $a<b<c<d$ are positive numbers
with $a+d=b+c$ then, if $x$ exceeds an (unspeciﬁed) margin, we have
\begin{equation}\label{Panaitopol1979}
\pi(ax)+\pi(dx)<\pi(bx)+\pi(cx)\,.
\end{equation}
In 1998 Gordon and Rodemich \cite{Gordon} showed that \eqref{H-Lconjecture} is true
when $2 \leq \min(x, y) \leq1731$.
In the same year Dusart \cite{Dusart1998} obtained  that the inequality \eqref{H-Lconjecture} holds
for $x\leq y\leq\frac{7}{5}x(\log x)\log\log x$, and
\begin{equation*}
\frac{1}{2}[\pi(x)+\pi(2x)]\leq\pi(3x)\leq\pi(x)+\pi(2x)
\end{equation*}
for all integers $x\geq2$, and
\begin{equation*}
\pi(kx)\leq k\pi(x)
\end{equation*}
for all integers $x\geq3$, $k\geq1$, and
\begin{equation*}
\pi(2x)-\pi(x)\leq\frac{x}{\log x}
\end{equation*}
for all integers $x\geq2$. Again in the same year Panaitopol \cite{Panaitopol1998} proved that
\begin{equation*}
\pi(xy)\geq\pi(x)\pi(y)
\end{equation*}
for $x, y\geq2$ with the exceptions of $(x, y)=(7, 5); (7, 7)$.
Further in 2000 Panaitopol \cite{Panaitopol2000} established that the inequality \eqref{H-Lconjecture}
holds for $ax\leq y\leq x$,\, $x>e^{9a^{-2}}$,\, $0<a\leq1$, and
\begin{equation*}
\pi(ax)<a\pi(x)
\end{equation*}
for $a>1$, $x>e^{4(\log a)^{-2}}$, and
\begin{equation*}
\pi(x+y)\leq2\pi(x/2)+\pi(y)
\end{equation*}
for $2\leq y\leq x$,\, $x\geq6$, and
\begin{equation*}
\pi(x+y)\leq2[\pi(x/2)+\pi(y/2)]
\end{equation*}
for all integers $x, y\geq4$.
In the same year Panaitopol \cite{Panaitopol2000a} proved that for $\alpha<1$ there exists
$n=n(\alpha)$ (explicitly computable) such that for all $x, y\geq n$ one has
\begin{equation*}
\pi^\alpha(x+y)\leq\pi^\alpha(x)+\pi^\alpha(y)\,.
\end{equation*}
Again Panaitopol \cite{Panaitopol2001} in 2001 got
\begin{equation*}
\pi(x+y)\leq\pi(x)+\pi(y)+\pi(x-y)\,,
\end{equation*}
where $2\leq y\leq x$ and
\begin{equation}\label{Panaitopol2001}
\pi^2(x+y)\leq2[\pi^2(x)+\pi^2(y)]
\end{equation}
for all integers $x, y \geq 2$, and
\begin{equation*}
2\frac{\pi(x+y)}{x+y}\leq\frac{\pi(x)}{x}+\frac{\pi(y)}{y}
\end{equation*}
except the cases $(x, y)=(3, 2)$, $(5, 2)$.
In the same year Panaitopol \cite{Panaitopol2001a} showed that the inequality \eqref{H-Lconjecture}
holds for all integers $x, y \geq 2$ that fulfill the conditions $\frac{y}{29}\leq x\leq y$.
Thanks to Garunk\v{s}tis \cite{Garun} from 2002 we have that
\begin{equation}\label{Garun}
\pi(x+y)\leq1.11\pi\left(\frac{x}{1.11}\right)+\pi(y)\,,
\end{equation}
where $7\leq y\leq x$.
Subsequently in 2003 Mincu \cite{Mincu} improved the inequality \eqref{Garun} to
\begin{equation*}
\pi(x+y)\leq1.04\pi\left(\frac{x}{1.04}\right)+\pi(y)\,,
\end{equation*}
where $17\leq y\leq x$. Moreover Mincu \cite{Mincu} found an explicit margin for $x$ from which
on \eqref{Panaitopol1979} checks up and he sharpened \eqref{Landau-inequality} to
\begin{equation*}
\pi\big(2x+\pi(x)\big)\leq2\pi(x)
\end{equation*}
for all integers $x\geq37$, and also established that
\begin{equation*}
\pi(x)+\pi(4x)<\pi(2x)+\pi(3x)
\end{equation*}
for all real $x\geq284.5$.
Further in 2012 Minculete \cite{Minculete2012} obtained
\begin{equation*}
b\pi(ax)<a\pi(bx)
\end{equation*}
for every positive integers $a, b, x$ with $bx \geq5393$,\, $ax\geq4$,\, $a\geq be^{0.12}$, and
\begin{equation*}
(x+y)\pi(x+y)<2[x\pi(x)+y\pi(y)]
\end{equation*}
for all integers $x, y \geq67$, and
\begin{equation*}
\pi^2(xy)<\frac{1}{4}xy\pi(x)\pi(y)
\end{equation*}
for all integers $x, y \geq1525$, and
\begin{equation*}
(1-a)\pi(x+y)<(1+a)\pi(x)+\pi(y)
\end{equation*}
for all integers $10544111 \leq y\leq x$ and $a\geq0.00042$.
In addition, Minculete \cite{Minculete2012} formulated the following hypothesis.
\begin{conjecture}
There exists sufficiently large natural number $n_0$, such that for all $x, y \geq n_0$, we have the inequality
\begin{equation*}
\pi^2(xy)\leq\pi(x^2)\pi(y^2)\,.
\end{equation*}
\end{conjecture}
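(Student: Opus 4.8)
\medskip
\noindent\emph{Proof proposal.}
The plan is to take logarithms. For $x,y\ge n_0\ge2$ set $u=\log x$, $v=\log y$, $s=u+v$ and $\Phi(t):=\log\pi(e^t)$; since $u+v=\tfrac12(2u+2v)$, the inequality $\pi^2(xy)\le\pi(x^2)\pi(y^2)$ is exactly
\begin{equation*}
2\Phi(s)\ \le\ \Phi(2u)+\Phi(2v),
\end{equation*}
i.e.\ the midpoint convexity of $\Phi$ at the points $2u$ and $2v$. Note that for $x=y$ the two sides agree, so the asserted inequality is an \emph{equality} on the diagonal; hence no argument can succeed by inserting one‑sided (upper/lower) bounds for $\pi$ separately, and the real content concerns the behaviour of $\pi$ near the diagonal. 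The strategy is: (a) isolate a smooth, genuinely convex main term of $\Phi$ with an explicit lower bound on its second derivative; (b) show the oscillatory part $\pi-\operatorname{li}$ is too small to eat the resulting gap, except extremely close to the diagonal; (c) deal with what is left.

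For (a), write $\pi(t)=\operatorname{li}(t)+E(t)$ and put $L(t):=\log\operatorname{li}(e^t)$. Because $\frac{d}{dt}\operatorname{li}(e^t)=e^t/t$,
\begin{equation*}
L'(t)=\frac{e^t/t}{\operatorname{li}(e^t)},\qquad
L''(t)=\frac{e^t/t}{\operatorname{li}(e^t)^2}\Bigl(\bigl(1-\tfrac1t\bigr)\operatorname{li}(e^t)-\tfrac{e^t}{t}\Bigr).
\end{equation*}
Using $\frac{d}{dx}\bigl(\operatorname{li}(x)-\tfrac{x}{\log x-1}\bigr)=\frac{1}{\log x(\log x-1)^2}>0$ together with $\operatorname{li}(x)>\tfrac{x}{\log x-1}$ for $x\ge5393$, the bracket above is positive for $t\ge\log 5393$, and a short estimate gives $L''(t)\ge c_0/t^2$ for all $t\ge t_0$ with an explicit $c_0>0$. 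Thus $L$ is convex on $[t_0,\infty)$, and whenever $2\min(\log x,\log y)\ge t_0$ the $\operatorname{li}$‑version of the inequality holds with a strictly positive explicit gap,
\begin{equation*}
L(2u)+L(2v)-2L(s)=\int_{-h}^{h}(h-|r|)\,L''(s+r)\,dr\ \ge\ c_0\!\int_{-h}^{h}\frac{h-|r|}{(s+r)^2}\,dr\ >\ 0,\qquad h:=|u-v|.
\end{equation*}

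For (b), feed in an explicit prime number theorem with error term — for instance the Mossinghoff–Trudgian bound $|\pi(t)-\operatorname{li}(t)|\le0.2593\,t(\log t)^{-3/4}e^{-\sqrt{\log t/6.315}}$ for $t\ge229$ recorded above — which yields $\bigl|E(e^t)/\operatorname{li}(e^t)\bigr|\le C\,t^{1/4}e^{-\sqrt{t/6.315}}$, smaller than any power of $1/t$. Writing $\Phi(t)=L(t)+\log\!\bigl(1+E(e^t)/\operatorname{li}(e^t)\bigr)$, the midpoint defect $\Phi(2u)+\Phi(2v)-2\Phi(s)$ equals the positive main gap of (a) plus an error $\ll(s-h)^{1/4}e^{-\sqrt{(s-h)/6.315}}$, where $s-h=2\min(\log x,\log y)$. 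Since the main gap decays only polynomially while the error decays like $e^{-c\sqrt{\,\cdot\,}}$, for $\min(x,y)$ beyond an explicit bound the main gap wins \emph{as long as $x$ and $y$ are not too close}; quantitatively this settles the conjecture for every pair with $|\log x-\log y|\ge\kappa(\log\min(x,y))^{9/8}e^{-\frac12\sqrt{\log\min(x,y)/6.315}}$, and (via the crude Rosser–Schoenfeld bounds $\tfrac{t}{\log t}<\pi(t)<1.26\tfrac{t}{\log t}$, which already force the inequality once $6.4\log x\log y\le(\log x+\log y)^2$) for every pair whose logarithms are far apart.

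The remaining near‑diagonal strip — in particular consecutive integers $y=x+1$ — is the genuine obstacle: there the main gap has size $\asymp(\log x-\log y)^2/(\log x)^2$, which is below even the conditional (RH) bound for $|\pi(t)-\operatorname{li}(t)|/\operatorname{li}(t)$ near $t=x^2$, so the smooth approximation is useless. Going back to $\pi$ itself, with $A=\pi(x^2)\le B=\pi(xy)\le C=\pi(y^2)$ the inequality $B^2\le AC$ is equivalent to
\begin{equation*}
A\bigl((C-B)-(B-A)\bigr)\ \ge\ (B-A)^2 ,
\end{equation*}
i.e.\ (since $(B-A)^2/A$ is of lower order, bounded by Brun–Titchmarsh) to the assertion that the short interval $(xy,y^2]$ contains more primes than $(x^2,xy]$ for all large $x$ — the second interval being longer by the factor $y/x$. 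Proving such a short‑interval prime comparison unconditionally is beyond current methods (even Baker–Harman–Pintz only guarantees primes in intervals of length about $x^{1.05}$ around $x^2$, not $x$), and under RH the fluctuations of prime counts in windows of length $\asymp\sqrt N$ about $N=x^2$ are of order $\sqrt N$, far larger than the margin required; one would seem to need a genuine equidistribution input of Cramér/Selberg strength. This is the step I expect to be the real difficulty, and it is presumably why the statement is only conjectured: the plan above reduces the full conjecture to this short‑interval prime comparison and proves it unconditionally away from the diagonal.
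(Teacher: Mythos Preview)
The statement is recorded in the paper as an open \emph{conjecture} of Minculete; the paper offers no proof, so there is nothing to compare your attempt against.

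Your proposal is, as you yourself say, not a proof. The reformulation as midpoint convexity of $t\mapsto\log\pi(e^t)$ is correct, and the off-diagonal part---convexity of $\log\operatorname{li}(e^t)$ plus an explicit PNT error term, backed up by the crude Rosser--Schoenfeld bounds when $\log x/\log y$ is far from $1$---is sound and does establish the inequality in that regime. Your diagnosis of the near-diagonal obstruction is equally accurate: for $y=x+1$ the question becomes a comparison of prime counts in two adjacent intervals of length $\asymp\sqrt N$ about $N=x^2$, well outside the reach of Baker--Harman--Pintz and not resolved by RH. If anything, pushing your own reduction $A\bigl((C-B)-(B-A)\bigr)\ge(B-A)^2$ through the Cram\'er model gives $(C-B)-(B-A)$ a mean of order $1/\log x$ but a standard deviation of order $\sqrt{x/\log x}$, so heuristically one would expect the inequality to \emph{fail} for infinitely many pairs $(x,x+1)$; this is consistent with the statement being posed only as a conjecture.
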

Inspired by Panaitopol's result \eqref{Panaitopol2001} in 2013 Alzer \cite{Alzer2013} proved that
\begin{equation*}
\pi^2(x)+\pi^2(y)\leq\frac{5}{4}\pi^2(x+y)
\end{equation*}
holds for all integers $x, y \geq 2$.
In 2022 Alzer, Kwong and S\'{a}ndor \cite{Alzer2022} showed that
\begin{equation*}
\pi(x)\pi(y)\leq\frac{9}{16}\pi^2(x+y)
\end{equation*}
is valid for all integers $x, y \geq2$ with equality only for $x=5$.
The constant factor $\frac{9}{16}$ is the best possible. The special case $x=y$ leads to
\begin{equation*}
\pi(2x)\geq\frac{4}{3}\pi(x)\,,
\end{equation*}
which is a converse of the Landau inequality \eqref{Landau-inequality}.
Moreover in \cite{Alzer2022} we find the results
\begin{equation*}
\pi(x+y)\leq2\Big[\pi(x)^x\pi(y)^y\Big]^\frac{1}{x+y}
\end{equation*}
and
\begin{equation*}
\left(2\frac{\pi(x+y)}{x+y}\right)^s\leq \left(\frac{\pi(x)}{x}\right)^s+\left(\frac{\pi(y)}{y}\right)^s\,.
\end{equation*}
The latter inequality holds for all integers $x, y \geq2$ if and only if $0<s\leq s_0=0.94745\ldots$.
Here, $s_0$ is the only positive solution of the equation
\begin{equation*}
\left(\frac{16}{7}\right)^t-\left(\frac{6}{5}\right)^t=1\,.
\end{equation*}
Subsequently S\'{a}ndor \cite{Sandor2021} deduced that for all $x, y\geq 2$ we have
\begin{equation*}
\pi(x+y)\geq \frac{2}{3}[\pi(x)+\pi(y)]
\end{equation*}
and
\begin{equation*}
\pi^2(x+y)\geq \frac{8}{7}[x\pi(x)+y\pi(y)]\,.
\end{equation*}
Further S\'{a}ndor \cite{Sandor2022a} got
\begin{equation*}
\sqrt{3\pi(x+y)}\geq\sqrt{\pi(x})+\sqrt{\pi(y)}\,,
\end{equation*}
\begin{equation*}
\sqrt{x\pi(x)}+\sqrt{y\pi(y)}\leq\frac{\sqrt{6}}{2}\sqrt{(x+y)\pi(x+y)}
\end{equation*}
for all $x, y\geq 1$, and
\begin{equation*}
\sqrt{\pi(x+y)}\leq\sqrt{\pi(x})+\sqrt{\pi(y)}
\end{equation*}
for all $x, y\geq 2$, and
\begin{equation*}
\sqrt{2\pi(x+y)}\geq\sqrt{\pi(x})+\sqrt{\pi(y)}\,,
\end{equation*}
\begin{equation*}
\sqrt{2\pi(x+y)}\leq\sqrt{\pi(x})+\sqrt{\pi(y)}
\end{equation*}
for infinitely many $(x, y)$, and
\begin{equation*}
(x+y)\sqrt{\pi(x+y)}\leq x\sqrt{2\pi(x})+y\sqrt{2\pi(y)}
\end{equation*}
for all $x, y\geq 2$, and
\begin{equation*}
\pi(x+y)\sqrt{x+y}\leq \pi(x)\sqrt{2x}+\pi(y)\sqrt{2y}
\end{equation*}
for any $2\leq y\leq x$, except for the values $(x, y)=(4, 3), (10, 9)$, and
\begin{equation*}
\pi(x^2+y^2)\geq \frac{5}{6}[\pi(x^2)+\pi(y^2)]\,,
\end{equation*}
\begin{equation*}
\sqrt{\frac{12}{5}\pi(x^2+y^2)}\geq\sqrt{\pi(x^2)}+\sqrt{\pi(y^2)}
\end{equation*}
for all $x, y\geq 2$.
Very recently S\'{a}ndor \cite{Sandor2023} has improved his results in \cite{Alzer2022},
\cite{Sandor2021} and \cite{Sandor2022a} showing that for all $x, y\geq 2$ we have
\begin{equation*}
\pi(x+y)\geq \frac{3}{4}[\pi(x)+\pi(y)]
\end{equation*}
with the exceptions of $(x, y)=(7, 3); (5, 5); (7, 5); (23, 13); (19, 17)$, and
\begin{equation*}
\pi(2x)\geq\frac{3}{2}\pi(x)\,,
\end{equation*}
for any $x\neq5$, $x\geq2$, and
\begin{equation*}
\pi(x)+\pi(y)\leq\pi(x)\pi(y)\,,
\end{equation*}
for any $x,y\geq3$, and
\begin{equation*}
\pi(x)+\pi(y)\leq\frac{2}{3}\pi(x)\pi(y)\,,
\end{equation*}
with the exceptions of $(x, y)=(3, 3); (4, 3); (4, 4)$, and
\begin{equation*}
\pi(x+y)\leq\frac{x}{y}\pi(x)+\pi(y)\,,
\end{equation*}
\begin{equation*}
\pi(x+y)\leq 2\sqrt{\pi(x)^{x/y}\pi(y)}\leq\pi(x)^{x/y}+\pi(y)
\end{equation*}
for $2\leq y\leq x$, and
\begin{equation*}
\frac{5}{4}\leq\frac{\pi(\pi(2x))}{\pi(\pi(x))}\leq2
\end{equation*}
for $x\geq3$ except for $x=5$ on the left side, and
\begin{equation*}
\frac{3}{2}\leq\frac{\pi(\pi(3x))}{\pi(\pi(x))}\leq3
\end{equation*}
for $x\geq3$.

\subsection{Conjectures about {\boldmath$\pi(x)$}}\label{Conjecturesforpix}
\indent

Using the analogy of the Young, H\"{o}lder and Minkowski inequalities the author formulates the following three hypotheses.
\begin{conjecture}
Let $\frac{1}{p}+\frac{1}{q}=1$ with $p, q>1$.
There exists sufficiently large natural number $n_0$, such that for all $x, y \geq n_0$ the inequality
\begin{equation*}
\pi(xy)\leq\frac{\pi(x^p)}{p}+\frac{\pi(x^q)}{q}
\end{equation*}
holds.
\end{conjecture}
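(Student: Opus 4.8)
We read the right-hand side as $\frac{\pi(x^{p})}{p}+\frac{\pi(y^{q})}{q}$, the form corresponding to Young's inequality $xy\le x^{p}/p+y^{q}/q$. The plan is first to recast the statement: setting $u=x^{p}$, $v=y^{q}$ gives $xy=u^{1/p}v^{1/q}$, so the claim becomes
\begin{equation*}
\pi\!\left(u^{1/p}v^{1/q}\right)\ \le\ \frac{\pi(u)}{p}+\frac{\pi(v)}{q}\qquad(u,v\text{ large}).
\end{equation*}
Write $m=u^{1/p}v^{1/q}$, so $\log m=\tfrac1p\log u+\tfrac1q\log v$ is a weighted mean of $\log u$ and $\log v$; the inequality thus says that $t\mapsto\pi(t)$ is ``convex along the logarithmic scale''. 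Assuming without loss of generality $v\le u$ (hence $v\le m\le u$) and subtracting $\pi(m)=(\tfrac1p+\tfrac1q)\pi(m)$ from both sides, the claim is \emph{equivalent} to
\begin{equation*}
q\bigl(\pi(u)-\pi(m)\bigr)\ \ge\ p\bigl(\pi(m)-\pi(v)\bigr),
\end{equation*}
i.e.\ to a comparison of the number of primes in $(m,u]$ with the number of primes in $(v,m]$. On the logarithmic scale $m$ splits $[\log v,\log u]$ into a left piece $[\log v,\log m]$ of length $\tfrac1p\log(u/v)$ and a right piece $[\log m,\log u]$ of length $\tfrac1q\log(u/v)$, since $m/v=(u/v)^{1/p}$ and $u/m=(u/v)^{1/q}$.

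The model case is $\operatorname{li}$. Counting primes in $(a,b]$ is modelled by $\int_{\log a}^{\log b}\tfrac{e^{w}}{w}\,dw$, and $w\mapsto e^{w}/w$ increases for $w>1$; bounding it below by its value $m/\log m$ at $\log m$ on $[\log m,\log u]$ and above by the same value on $[\log v,\log m]$ gives $\int_{\log m}^{\log u}\tfrac{e^{w}}{w}dw\ge\tfrac1q\log(u/v)\cdot\tfrac{m}{\log m}\ge\tfrac pq\int_{\log v}^{\log m}\tfrac{e^{w}}{w}dw$, which is exactly the displayed inequality with $\pi$ replaced by $\operatorname{li}$. Equivalently, $f(w):=\operatorname{li}(e^{w})$ satisfies $f'(w)=e^{w}/w$ and $f''(w)=e^{w}(w-1)/w^{2}>0$ for $w>1$, so $\operatorname{li}(m)\le\tfrac1p\operatorname{li}(u)+\tfrac1q\operatorname{li}(v)$, and a second-order expansion of $f$ about $\log m$ yields the quantitative surplus
\begin{equation*}
G(u,v):=\frac{\operatorname{li}(u)}{p}+\frac{\operatorname{li}(v)}{q}-\operatorname{li}(m)\ \asymp_{p,q}\ \bigl(\log u-\log v\bigr)^{2}\,\frac{m}{\log m}\qquad(m\to\infty).
\end{equation*}

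Next I would transfer this to $\pi$ via an explicit prime number theorem. Writing $\bigl|\pi(t)-\operatorname{li}(t)\bigr|\le\varepsilon(t)$ --- for instance $\varepsilon(t)=0.2593\,t(\log t)^{-3/4}\exp(-\sqrt{(\log t)/6.315})$ for $t\ge229$ (Mossinghoff--Trudgian, quoted above) --- and combining this at $u$, $v$, $m$ with $\operatorname{li}(m)=\tfrac1p\operatorname{li}(u)+\tfrac1q\operatorname{li}(v)-G(u,v)$ gives
\begin{equation*}
\pi(m)\ \le\ \frac{\pi(u)}{p}+\frac{\pi(v)}{q}+\varepsilon(m)+\tfrac1p\varepsilon(u)+\tfrac1q\varepsilon(v)-G(u,v),
\end{equation*}
so the conjecture follows once $G(u,v)\ge\varepsilon(m)+\tfrac1p\varepsilon(u)+\tfrac1q\varepsilon(v)$. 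Because each $\varepsilon(t)=O\bigl(t\exp(-c\sqrt{\log t})\bigr)$, this holds automatically as soon as $x,y$ stay away from the critical curve $x^{p}=y^{q}$, say for $|\log u-\log v|\ge\exp(-c'\sqrt{\log m})$; a fixed compact range of the ratio $u/v$ bounded away from $1$ needs only $\pi(t)=\tfrac{t}{\log t}(1+O(1/\log t))$, and a finite computation using explicit Dusart-type bounds (quoted above) disposes of the bounded values of $m$. Alternatively, once the interval $(v,u]$ contains enough primes one may work with the equivalent form $q\,\#(m,u]\ge p\,\#(v,m]$ directly, comparing each subinterval count to its $\operatorname{li}$-prediction with an $O(1)$ error absorbed by the increasing-density surplus; this again yields $n_{0}=n_{0}(p,q)$ explicitly, \emph{except} on a thin neighbourhood of the curve.

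The main obstacle is exactly that neighbourhood, where $u$ and $v$ are multiplicatively extremely close. There $G(u,v)$, being quadratically small in $\log(u/v)$, drops below the available bounds for $|\pi-\operatorname{li}|$ and the transfer breaks down; and \emph{on} the curve one has $xy=x^{p}=y^{q}$, so the conjectured inequality degenerates to the identity $\pi(xy)=\tfrac1p\pi(xy)+\tfrac1q\pi(xy)$ --- underscoring that everything there depends on the fine oscillation of $\pi$ about $\operatorname{li}$. The step-function structure ($v\le m\le u$, with equality in the claim whenever $\pi$ is constant on $[v,u]$) handles prime-free short intervals, but for $u$ large the interval $[v,u]$ still contains many primes even when $\log(u/v)$ is that small, so this does not close the gap; even the Riemann Hypothesis bound $|\pi(t)-\operatorname{li}(t)|\ll\sqrt t\log t$ only shrinks the open region (to roughly $|\log(u/v)|\ll(\log m)\,m^{-1/4}$) without eliminating it. Closing it would require genuinely finer control of $\pi(m)-\operatorname{li}(m)$ relative to $\pi(u)$ and $\pi(v)$ for nearby $u,v,m$ --- an obstruction in the same spirit as those behind the second Hardy--Littlewood conjecture and Ramanujan's inequality \eqref{Ramanujan-inequality}.
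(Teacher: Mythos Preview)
This statement is presented in the paper as an open \emph{conjecture}, not a theorem; the author offers no proof and no heuristic argument beyond the analogy with Young's inequality. There is therefore no proof in the paper to compare your attempt against.

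Your sketch is a sensible and honest treatment. You correctly establish the $\operatorname{li}$-analogue via the convexity of $w\mapsto\operatorname{li}(e^{w})$ for $w>1$, and you correctly identify that transferring this to $\pi$ via any known explicit form of the prime number theorem fails in a thin neighbourhood of the critical curve $x^{p}=y^{q}$, where the convexity surplus $G(u,v)$ is quadratically small in $\log(u/v)$ while the available bound on $|\pi-\operatorname{li}|$ is only subpolynomially small. You are right that this obstruction persists even under RH, and that it is of the same character as the difficulties behind the second Hardy--Littlewood conjecture and Ramanujan's inequality. In short, your analysis does not prove the conjecture---and you say so yourself---but it goes well beyond what the paper offers and isolates precisely why the statement is hard. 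Your correction of the apparent misprint $\pi(x^{q})\to\pi(y^{q})$ on the right-hand side is also surely what the author intended.
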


\begin{conjecture}
Let $\frac{1}{p}+\frac{1}{q}=1$ with $p, q>1$.
There exists sufficiently large natural number $n_0$, such that for all $x, y \geq n_0$ the inequality
\begin{equation*}
\pi(xy)\leq[\pi(x^p)]^\frac{1}{p}[\pi(y^q)]^\frac{1}{q}
\end{equation*}
holds.
\end{conjecture}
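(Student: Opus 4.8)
We sketch a possible approach. The starting observation is the exact identity
\begin{equation*}
xy=(x^{p})^{1/p}(y^{q})^{1/q},
\end{equation*}
which holds precisely because $\frac1p+\frac1q=1$: the argument $xy$ is the weighted geometric mean of $x^{p}$ and $y^{q}$ with weights $\frac1p,\frac1q$. Writing $u=x^{p}$, $v=y^{q}$, the conjectured inequality reads $\pi(u^{1/p}v^{1/q})\le\pi(u)^{1/p}\pi(v)^{1/q}$; equivalently, it asserts that $s\mapsto\log\pi(e^{s})$ behaves convexly at the points $\log(x^{p})$ and $\log(y^{q})$. By the weighted AM--GM inequality this statement is stronger than the preceding conjecture, so proving it settles both.

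A natural plan is to establish first the analogue with the logarithmic integral in place of $\pi$, and then transfer it using the explicit bounds on $|\pi(x)-\operatorname{li}(x)|$ of Trudgian \cite{Trudgian2016} and of Mossinghoff and Trudgian \cite{Mossinghoff2015}. For the first step one checks that $\Phi(s):=\operatorname{li}(e^{s})$ has $\log\Phi$ convex for $s\ge s_{0}$: since $\Phi'(s)=e^{s}/s$,
\begin{equation*}
(\log\Phi)''(s)=\frac{\Phi''(s)\Phi(s)-\Phi'(s)^{2}}{\Phi(s)^{2}}=\frac{e^{s}}{s^{2}\,\Phi(s)^{2}}\Big((s-1)\operatorname{li}(e^{s})-e^{s}\Big),
\end{equation*}
and the bracket $(s-1)\operatorname{li}(e^{s})-e^{s}$ is increasing for $s$ above a small explicit bound (its derivative equals $\operatorname{li}(e^{s})-e^{s}/s$) and positive from some $s_{0}$ on, as one verifies numerically. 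Convexity of $\log\Phi$ then gives, with $A=\log(x^{p})$, $B=\log(y^{q})$ and $C=\frac1pA+\frac1qB=\log(xy)$,
\begin{equation*}
\Delta_{\operatorname{li}}:=\tfrac1p\log\operatorname{li}(x^{p})+\tfrac1q\log\operatorname{li}(y^{q})-\log\operatorname{li}(xy)\ \ge\ 0.
\end{equation*}
A second-order expansion shows that $\Delta_{\operatorname{li}}$ has size $\asymp D^{2}/(\log x)^{2}$ with $D:=\log(y^{q}/x^{p})$ when $D$ is small, while $\Delta_{\operatorname{li}}$ exceeds a fixed positive constant once $x,y$ are large and $\log y/\log x$ is bounded away from $p/q$.

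Passing back to $\pi$, put $\eta(t):=\log\pi(t)-\log\operatorname{li}(t)$, so that the quoted bounds give $|\eta(t)|\le\rho(t)$ with $\rho(t)\to0$; the quantity to be shown nonnegative is $\Delta_{\operatorname{li}}+\big(\tfrac1p\eta(x^{p})+\tfrac1q\eta(y^{q})-\eta(xy)\big)$. When $\log y/\log x$ is bounded away from $p/q$ this is routine: $\Delta_{\operatorname{li}}$ is bounded below while the $\eta$-term tends to $0$, so the inequality holds for $x,y$ large, and for small $x,y$ it reduces to comparison with a constant. The hard part is the near-balanced range $x^{p}\approx y^{q}$: there the inequality degenerates to an equality at $x^{p}=y^{q}$ and elsewhere has only the quadratically small margin $\Delta_{\operatorname{li}}\asymp D^{2}/(\log x)^{2}$, which is typically far below the unconditional size of $\rho\big(\min(x^{p},y^{q})\big)$; moreover $\eta$ is not smooth --- it carries the jumps of $\pi$ --- so $\tfrac1p\eta(x^{p})+\tfrac1q\eta(y^{q})-\eta(xy)$ is governed by the distribution of primes in the two short intervals $(x^{p},xy]$ and $(xy,y^{q}]$. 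In fact the conjecture is essentially equivalent to the assertion that the primes of $(x^{p},y^{q}]$ are never all confined to the left subinterval $(x^{p},xy]$ --- a primes-in-short-intervals statement near $x^{p}$. We expect this to be the genuine obstacle: handling it unconditionally appears to lie beyond current technology (plausibly needing the Riemann Hypothesis, or conjectural prime-gap bounds beyond it, with the precise requirement depending on $p$), after which the finitely many remaining cases would be a computation, as with the explicit results surveyed above.
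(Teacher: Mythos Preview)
The statement is labelled a \emph{conjecture} in the paper and is offered without proof, as one of three open hypotheses formulated by analogy with the classical Young, H\"older and Minkowski inequalities. There is therefore no argument in the paper to compare your proposal against.

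Your write-up is not a proof either, and you say as much: you present a heuristic outline, correctly recast the inequality as a log-convexity statement for $s\mapsto\log\pi(e^{s})$, verify the analogue for $\operatorname{li}$ via the computation of $(\log\Phi)''$, and then isolate the genuine obstruction in the near-balanced regime $x^{p}\approx y^{q}$, where the convexity margin $\Delta_{\operatorname{li}}\asymp D^{2}/(\log x)^{2}$ is swamped by the oscillation of $\pi$ about $\operatorname{li}$. Your reduction to a short-interval statement is on target; for example, with $p=q=2$ and $y=x+1$ the two subintervals $(x^{2},x(x{+}1)]$ and $(x(x{+}1),(x{+}1)^{2}]$ each have length $\asymp\sqrt{N}$ where $N=x^{2}$, so excluding the bad configurations amounts to controlling prime gaps at the scale $\sqrt{N}$, which is at or beyond what the Riemann Hypothesis delivers. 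Your conclusion that an unconditional proof lies outside current technology is accurate, and the paper's stance---leaving the statement as a conjecture---is consistent with it. One minor overstatement: the failure mode you describe (all primes of $(x^{p},y^{q}]$ confined to $(x^{p},xy]$) is sufficient to break the inequality but not literally equivalent to its negation, since a sufficiently lopsided distribution would also do; ``essentially equivalent'' should be read loosely.
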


\begin{conjecture}
Let $p>1$.
There exists sufficiently large natural number $n_0$, such that for all $x, y \geq n_0$ the inequality
\begin{equation*}
[\pi\big((x+y)^p\big)]^\frac{1}{p}\leq[\pi(x^p)]^\frac{1}{p}+[\pi(y^p)]^\frac{1}{p}
\end{equation*}
holds.
\end{conjecture}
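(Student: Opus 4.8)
The plan is to treat this as an asymptotic statement, reduce it via the prime number theorem to an inequality for the logarithmic integral that can be checked by a convexity/monotonicity argument, and then control the error term. Throughout assume $x\ge y$ without loss of generality, and use the classical form of the prime number theorem in the shape $\pi(t)=\operatorname{li}(t)+E(t)$ with $E(t)\ll t\exp(-c\sqrt{\log t})$ for an absolute constant $c>0$.

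First I would establish the ``main term'' version, namely that $g(t):=\operatorname{li}(t^p)^{1/p}$ is subadditive for large $t$. The key point is that $t\mapsto \operatorname{li}(t^p)/t^p$ is eventually strictly decreasing: it is asymptotic to $(p\log t)^{-1}$, and its derivative is negative once $t$ is large because $\operatorname{li}(s)>s/\log s$. Writing $g(t)=t\big(\operatorname{li}(t^p)/t^p\big)^{1/p}$ and splitting the leading factor $x+y$ as $x+y$, this monotonicity gives
\begin{equation*}
g(x+y)=x\Big(\tfrac{\operatorname{li}((x+y)^p)}{(x+y)^p}\Big)^{1/p}+y\Big(\tfrac{\operatorname{li}((x+y)^p)}{(x+y)^p}\Big)^{1/p}\le x\Big(\tfrac{\operatorname{li}(x^p)}{x^p}\Big)^{1/p}+y\Big(\tfrac{\operatorname{li}(y^p)}{y^p}\Big)^{1/p}=g(x)+g(y).
\end{equation*}
Making the monotonicity quantitative yields a gap: keeping only the first summand and applying the mean value theorem, $g(x)+g(y)-g(x+y)\ge x\big[(\operatorname{li}(x^p)/x^p)^{1/p}-(\operatorname{li}((x+y)^p)/(x+y)^p)^{1/p}\big]\asymp_p y/(\log x)^{1+1/p}$.

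Next I would transfer this to $\pi$. A first-order expansion of $t\mapsto t^{1/p}$ gives $\big|\pi(t^p)^{1/p}-\operatorname{li}(t^p)^{1/p}\big|\ll \pi(t^p)^{1/p-1}\,|E(t^p)|\ll t\exp(-c'\sqrt{\log t})$, so $\pi((x+y)^p)^{1/p}\le g(x)+g(y)+O\big((x+y)\exp(-c'\sqrt{\log(x+y)})\big)\le \pi(x^p)^{1/p}+\pi(y^p)^{1/p}$ whenever the gap beats the error, i.e. whenever $y\gg x(\log x)^{1+1/p}\exp(-c'\sqrt{\log x})$. This disposes of all pairs $(x,y)$ with $y$ not much smaller than $x$, and in particular the diagonal $x=y$, where the statement reads $\pi((2x)^p)\le 2^p\pi(x^p)$. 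For $y$ very small, say $y\le x^{\delta}$ with a suitable $\delta=\delta(p)>0$, I would instead use the Montgomery--Vaughan bound $\pi((x+y)^p)-\pi(x^p)\le 2\pi\big((x+y)^p-x^p\big)$ together with $\pi(h)<1.26\,h/\log h$: since $(x+y)^p-x^p\asymp_p yx^{p-1}$, a first-order expansion gives $\pi((x+y)^p)^{1/p}-\pi(x^p)^{1/p}\ll_p y(\log x)^{-1/p}$, while $\pi(y^p)^{1/p}\sim y(p\log y)^{-1/p}$, so the inequality holds as soon as $\log y\ll_p\log x$, which is automatic in this range.

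The real obstacle is the intermediate window $x^{\delta}<y<x(\log x)^{1+1/p}\exp(-c'\sqrt{\log x})$. Here one is effectively asking for an asymptotic count of primes in a short interval about $x^p$ of length $\asymp_p yx^{p-1}=(x^p)^{\,1-1/p+o(1)}$. When $p>12/5$ this exponent exceeds $7/12$, so Huxley's unconditional short-interval theorem $\pi(t+t^{\theta})-\pi(t)\sim t^{\theta}/\log t$ applies and reduces this window back to the $\operatorname{li}$-inequality above plus an admissible error. For $p$ close to $1$, however, no unconditional short-interval estimate of the required strength is available, and this is precisely where the conjecture inherits the difficulty of the second Hardy--Littlewood conjecture itself (which is the formal limit $p\to 1^{+}$); here one would have to bring in zero-density estimates or the Riemann Hypothesis. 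Even the latter only clearly settles $p>2$ — then $E(t)\ll\sqrt{t}\log^2 t$ makes the transfer error negligible for all $x,y$ — and closing the range $1<p\le 2$ appears to require genuinely new input, so I expect this to be the crux of any proof.
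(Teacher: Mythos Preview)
The paper offers no proof of this statement: it is presented as one of three open conjectures (the author's Minkowski-type analogue) in the subsection on conjectures about $\pi(x)$, with no argument attached. There is therefore nothing in the paper to compare your attempt against.

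As for your proposal itself, it is not a proof but a partial strategy, and you say so explicitly. Your main-term subadditivity for $g(t)=\operatorname{li}(t^p)^{1/p}$ via the eventual monotonicity of $\operatorname{li}(s)/s$ is correct, and the transfer to $\pi$ via the PNT error term is fine; together these handle the ``balanced'' range $y\gg x(\log x)^{1+1/p}\exp(-c'\sqrt{\log x})$. Your treatment of the very small range $y\le x^{\delta}$ via Montgomery--Vaughan is also essentially right, though the phrasing ``$\log y\ll_p\log x$, which is automatic'' obscures that you need $\log y\le c_p\log x$ with a \emph{specific} small $c_p$ determined by the factor $2\cdot 1.26$ and the constants in the first-order expansion; this is exactly what forces $\delta$ to be small and creates the intermediate window.

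That intermediate window is the genuine gap, and you identify it yourself: for $x^{\delta}<y<x^{1-o(1)}$ one needs an asymptotic prime count in an interval of length $(x^p)^{(\alpha+p-1)/p}$ with $\alpha=\log y/\log x$ possibly as small as $\delta$, and neither Huxley (which needs exponent $>7/12$, hence $p>12/5$) nor RH (which via $E(t)\ll\sqrt{t}\log^2 t$ only closes the gap for $p>2$) suffices for $p$ near $1$. Since the statement is posed in the paper precisely as an open conjecture for all $p>1$, this is not a flaw in your write-up so much as confirmation that the problem is, as stated, open.
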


\section{The arithmetic derivative {\boldmath$D(n)$}}
\indent

In 1961 Barbeau \cite{Barbeau} obtained upper and lower bounds for $D(n)$ and found that
\begin{equation*}
D(n)\leq\frac {n\log_2n}{2}
\end{equation*}
and
\begin{equation*}
D(n)\geq\Omega(n)n^\frac {\Omega(n)-1}{\Omega (n)}
\end{equation*}
for $n\geq1$.

Afterwards in 2011 Dahl, Olsson and Loiko \cite{Dahl} showed that
\begin{equation*}
D(n)\leq \frac{n\log_pn}{p}\,,
\end{equation*}
where $p$ is the smallest prime factor of $n$ and equality holds when $n$ is a power of $p$.

\section{The {\boldmath$n$}-th harmonic number {\boldmath$H_n$}}
\indent

It is easy to see that
\begin{equation*}
\gamma+\log n<H_n<1+\log n\,,
\end{equation*}
where $n>1$ and $\gamma$ is Euler's constant.
Over the years, the upper and lower bounds of $H_n$ have been improved many times.
In 1971 Tims and Tyrrell \cite{Tims} proved the following theorem.
\begin{theorem}
For all integers $n\geq2$ we have
\begin{equation*}
\gamma+\log n+\frac{1}{2(n+1)}<H_n<\gamma+\log n+\frac{1}{2(n-1)}\,.
\end{equation*}
\end{theorem}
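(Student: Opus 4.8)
The plan is to express the error term $a_n:=H_n-\log n-\gamma$ --- which, as already noted, is positive and decreases to $0$ --- as a single convergent series whose individual terms can be sandwiched between telescoping quantities. First I would use $\gamma=\lim_{N\to\infty}(H_N-\log N)$ together with the telescopings $H_N-H_n=\sum_{k=n}^{N-1}\frac1{k+1}$ and $\log N-\log n=\sum_{k=n}^{N-1}\log\frac{k+1}{k}$ and let $N\to\infty$ to obtain
\[
a_n=\sum_{k=n}^{\infty}\Big(\log\tfrac{k+1}{k}-\tfrac1{k+1}\Big).
\]
Each summand is positive and of size $O(k^{-2})$, so the series converges; writing $\log\frac{k+1}{k}=\int_k^{k+1}\frac{dt}{t}$ and $\frac1{k+1}=\int_k^{k+1}\frac{dt}{k+1}$ and substituting $t=k+s$, the $k$-th term becomes
\[
\log\tfrac{k+1}{k}-\tfrac1{k+1}=\int_0^1\frac{1-s}{(k+s)(k+1)}\,ds ,
\]
so the task reduces to estimating $a_n=\sum_{k\ge n}\int_0^1\frac{1-s}{(k+s)(k+1)}\,ds$.

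For the upper bound: for $k\ge 2$ and $s\in[0,1)$ one has $1-s>0$ and $0<k(k-1)<k(k+1)\le(k+s)(k+1)$, so the $k$-th term is strictly smaller than $\int_0^1\frac{1-s}{k(k-1)}\,ds=\frac1{2k(k-1)}=\frac12\big(\frac1{k-1}-\frac1k\big)$; summing over $k\ge n$ and telescoping gives $a_n<\frac1{2(n-1)}$, i.e. $H_n<\gamma+\log n+\frac1{2(n-1)}$.

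For the lower bound: for $k\ge 1$ and $s\in[0,1)$ one has $1-s>0$ and $(k+s)(k+1)<(k+1)(k+2)$, so the $k$-th term is strictly larger than $\int_0^1\frac{1-s}{(k+1)(k+2)}\,ds=\frac1{2(k+1)(k+2)}=\frac12\big(\frac1{k+1}-\frac1{k+2}\big)$; summing and telescoping gives $a_n>\frac1{2(n+1)}$, i.e. $H_n>\gamma+\log n+\frac1{2(n+1)}$.

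The only step that needs real care is the first one: justifying the limit interchange (equivalently, the convergence of the telescoping series) and the elementary integral identity, and making sure every integrand comparison stays strict --- which is why I compare on the half-open interval $[0,1)$, where $1-s>0$ and the remaining factor is continuous and positive, so the strictness survives integration and then summation. Everything after that is routine; no input from analytic number theory is used. As a side remark, replacing the bound $(k+s)(k+1)\ge k(k-1)$ in the upper estimate by $(k+s)(k+1)\ge(k+s)^2$ and telescoping the resulting series against $\gamma$ once more yields the sharper $a_n<\frac1{2n}$, from which the stated upper inequality follows a fortiori.
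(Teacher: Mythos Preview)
Your argument is correct. The series representation $a_n=\sum_{k\ge n}\bigl(\log\tfrac{k+1}{k}-\tfrac1{k+1}\bigr)$ follows exactly as you say from $a_N\to 0$, the integral rewrite of each term is valid, and the two sandwich estimates $(k+s)(k+1)>k(k-1)$ (for $k\ge 2$) and $(k+s)(k+1)<(k+1)(k+2)$ lead cleanly to the telescoping bounds $\tfrac1{2(n-1)}$ and $\tfrac1{2(n+1)}$; your attention to strictness on $[0,1)$ is the right justification.

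As for comparison: the paper is a survey and does not prove this theorem at all---it simply refers the reader to Tims and Tyrrell's original note. So your write-up is not a reproduction of anything in the paper but a complete, self-contained elementary proof. Your closing remark that the same method with the sharper comparison $(k+s)(k+1)\ge (k+s)^2$ yields $a_n<\tfrac1{2n}$ is also correct; that is precisely Young's refinement, which the survey records as the next theorem.
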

\begin{proof}
See (\cite{Tims}, , Theorem 1, Corollary).
\end{proof}
Next step belongs to Young \cite{Young}. In 1991 he proved the next result.
\begin{theorem}\label{Young}
For all natural numbers $n$ we have
\begin{equation*}
\gamma+\log n+\frac{1}{2(n+1)}<H_n<\gamma+\log n+\frac{1}{2n}\,.
\end{equation*}
\end{theorem}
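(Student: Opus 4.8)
The plan is to prove both one-sided bounds uniformly by a telescoping argument. (Note that the left bound $H_n > \gamma + \log n + \frac{1}{2(n+1)}$ is, for $n \ge 2$, already contained in the Tims--Tyrrell theorem quoted above, and for $n = 1$ it is the triviality $\gamma + \tfrac14 < 1$; the substantive new content is the right bound, which sharpens Tims--Tyrrell's $\frac{1}{2(n-1)}$ to $\frac{1}{2n}$.) I would not use induction on $n$, because the resulting step inequality turns out to have the wrong sign; instead one exploits the known limit at infinity.

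Introduce $f(n) = \gamma + \log n + \frac{1}{2n} - H_n$ and $h(n) = H_n - \gamma - \log n - \frac{1}{2(n+1)}$, so the theorem asserts $f(n) > 0$ and $h(n) > 0$ for all $n \ge 1$. By $\lim_{m\to\infty}(H_m - \log m) = \gamma$ (Section~2), both sequences tend to $0$, hence each is the telescoping tail of its own increments: $f(n) = \sum_{k\ge n}\bigl(f(k) - f(k+1)\bigr)$ and $h(n) = \sum_{k\ge n}\bigl(h(k) - h(k+1)\bigr)$. It therefore suffices to show that $f$ and $h$ are strictly decreasing, i.e.\ that every increment is positive. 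A direct computation gives
\[
f(k) - f(k+1) = \frac{2k+1}{2k(k+1)} - \log\bigl(1+\tfrac1k\bigr), \qquad h(k) - h(k+1) = \log\bigl(1+\tfrac1k\bigr) - \frac{2k+5}{2(k+1)(k+2)}.
\]

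Substituting $t = 1/k \in (0,1]$, positivity of these increments amounts to $g(t) := \frac{t(2+t)}{2(1+t)} - \log(1+t) > 0$ and $G(t) := \log(1+t) - \frac{t(2+5t)}{2(1+t)(1+2t)} > 0$. Since $g(0) = G(0) = 0$, it is enough to verify $g'(t) \ge 0$ and $G'(t) \ge 0$. Differentiating and clearing denominators, the numerators collapse to monomials: I expect to find $g'(t) = \frac{t^2}{2(1+t)^2}$ and $G'(t) = \frac{t^2(5+8t)}{2(1+t)^2(1+2t)^2}$, both plainly positive for $t > 0$. Hence $g(t), G(t) > 0$ on $(0,1]$, so every increment of $f$ and of $h$ is positive, and the two bounds follow.

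I expect the main obstacle to be bookkeeping rather than any isolated hard step: one must orient the telescoping correctly, so that positivity of the increments together with $f(n), h(n) \to 0$ genuinely yields $f(n) > 0$ and $h(n) > 0$, and the polynomial algebra behind the compact forms of $g'$ and $G'$, though elementary, must be done without slips, since the whole argument rests on those numerators being nonnegative. If one prefers to avoid calculus, the bounds $g > 0$ and $G > 0$ can instead be extracted from $\log\bigl(1+\tfrac1k\bigr) = \int_k^{k+1}\frac{dx}{x}$ via a quadrature estimate, but the monotonicity route is the cleanest.
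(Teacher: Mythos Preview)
Your argument is correct. The increments are computed accurately, and the derivative identities $g'(t)=\dfrac{t^{2}}{2(1+t)^{2}}$ and $G'(t)=\dfrac{t^{2}(5+8t)}{2(1+t)^{2}(1+2t)^{2}}$ do hold (I checked the algebra), so $g,G>0$ on $(0,1]$, whence $f$ and $h$ are strictly decreasing to $0$ and therefore positive. The telescoping orientation is right: strict decrease together with limit $0$ gives strict positivity for every $n\ge 1$.

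As for comparison with the paper: the paper does not actually supply a proof of this theorem --- it simply refers the reader to Young's original note (\cite{Young}, Theorem~1). Young's own argument is in the same spirit as yours: he shows that the sequences $H_n-\log n-\gamma-\tfrac{1}{2n}$ and $H_n-\log n-\gamma-\tfrac{1}{2(n+1)}$ are monotone (increasing and decreasing, respectively) with limit $0$, via an elementary estimate on $\log(1+1/k)$. Your version packages the monotonicity step through the substitution $t=1/k$ and a derivative computation, which has the advantage of producing the exact closed forms for $g'$ and $G'$ and making the positivity transparent; Young's presentation leans instead on direct integral/series bounds for the logarithm. The two are essentially equivalent realisations of the same idea. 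Your remark that the only pitfall is bookkeeping is accurate; there is no missing idea.
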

\begin{proof}
See (\cite{Young}, Theorem 1).
\end{proof}
Subsequently T\'{o}th \cite{Toth1}, \cite{Toth2} improved the result of Young.
\begin{theorem}
For all natural numbers $n$ we have
\begin{equation*}
\gamma+\log n+\frac{1}{2n+\frac{2}{5}}<H_n<\gamma+\log n+\frac{1}{2n+\frac{1}{3}}\,.
\end{equation*}
\end{theorem}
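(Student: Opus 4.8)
The plan is to analyse the sequence $a_n:=H_n-\log n-\gamma$, which tends to $0$ as $n\to\infty$ by the limit recalled in Section~2. Abbreviate the two target quantities by $u(n):=\dfrac{1}{2n+\frac13}$ and $\ell(n):=\dfrac{1}{2n+\frac25}$; the assertion is that $\ell(n)<a_n<u(n)$ for every $n\ge1$. The engine of the proof is the telescoping identity
\[
a_n-a_{n+1}=\log\!\Big(1+\tfrac1n\Big)-\frac{1}{n+1}=:\phi(n),
\]
together with $u(n)-u(n+1)=\dfrac{2}{(2n+\frac13)(2n+\frac73)}$ and $\ell(n)-\ell(n+1)=\dfrac{2}{(2n+\frac25)(2n+\frac{12}{5})}$. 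Hence, if one establishes the two-sided estimate
\[
\frac{2}{(2n+\frac25)(2n+\frac{12}{5})}\;<\;\phi(n)\;<\;\frac{2}{(2n+\frac13)(2n+\frac73)}\qquad(n\ge1),
\]
then $(u(n)-a_n)-(u(n+1)-a_{n+1})>0$ and $(a_n-\ell(n))-(a_{n+1}-\ell(n+1))>0$, so the sequences $u(n)-a_n$ and $a_n-\ell(n)$ are both strictly decreasing; since each converges to $0$, each is strictly positive for every $n$, which is exactly the claimed double inequality.

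Everything therefore reduces to a sharp two-sided estimate of $\phi(n)$. For this I would \emph{not} expand in powers of $1/n$ — the resulting alternating series has truncations that are not tight enough near $n=2$ — but instead use the classical expansion
\[
\log\frac{n+1}{n}=2\sum_{j\ge0}\frac{1}{(2j+1)(2n+1)^{2j+1}}
\]
(valid for all $n\ge1$), together with the elementary identity $\dfrac{2}{2n+1}-\dfrac{1}{n+1}=\dfrac{1}{(2n+1)(n+1)}=\dfrac{2}{m(m+1)}$, where $m:=2n+1\ge3$. Combining these yields the representation with only non-negative coefficients,
\[
\phi(n)=\frac{2}{m(m+1)}+R_n,\qquad R_n:=\sum_{j\ge1}\frac{2}{(2j+1)\,m^{2j+1}}>0 .
\]

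For the upper bound I would majorise the tail geometrically, $R_n<\dfrac23\sum_{j\ge1}m^{-(2j+1)}=\dfrac{2}{3m(m^2-1)}$, which gives $\phi(n)<\dfrac{2(3m-2)}{3m(m-1)(m+1)}$; since $(2n+\frac13)(2n+\frac73)=\frac19(3m-2)(3m+4)$, the upper inequality is then equivalent to the polynomial inequality $(3m-2)^2(3m+4)\le 27\,m(m^2-1)$, i.e.\ to $16\le 9m$, which holds because $m=2n+1\ge3$. For the lower bound I would simply keep the first tail term, $R_n>\dfrac{2}{3m^3}$; since $(2n+\frac25)(2n+\frac{12}{5})=\frac{1}{25}(5m-3)(5m+7)$, the lower inequality reduces to $(6m^2+2m+2)(5m-3)(5m+7)>150\,m^3(m+1)$, i.e.\ to $20m^3-36m^2-2m-42>0$, which is increasing in $m$ and already positive at $m=3$.

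The polynomial verifications are routine, and because $m=2n+1\ge3$ throughout, no case $n=1$ needs separate treatment; every use of the series (in particular the geometric majorisation of $R_n$) is legitimate. The one step deserving care — and the reason the precise constants $\tfrac13$ and $\tfrac25$ work — is producing the representation $\phi(n)=\frac{2}{m(m+1)}+R_n$ with non-negative coefficients in powers of $1/(2n+1)$: this expansion agrees with each of the two target rational functions to the first two orders, so the remaining discrepancy is governed by the elementary inequalities in $m$ above. That is the only real obstacle; the rest is bookkeeping.
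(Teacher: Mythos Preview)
Your argument is correct. The telescoping reduction to the two--sided estimate of $\phi(n)=\log(1+1/n)-1/(n+1)$ is sound, the series representation in powers of $1/(2n+1)$ is valid, the geometric majorisation and the one--term minorisation of $R_n$ are both legitimate and strict, and the resulting polynomial inequalities $16<9m$ and $20m^3-36m^2-2m-42>0$ indeed hold for all $m=2n+1\ge3$ (I checked your expansions). Strictness carries through to the final statement because each of the monotone sequences $u(n)-a_n$ and $a_n-\ell(n)$ has limit $0$.

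As for comparison with the paper: the paper does not supply a proof of this theorem at all; it merely points to T\'{o}th's problem E3432 in the \emph{American Mathematical Monthly} and its published solution. Your write--up is therefore strictly more than what the paper offers. The monotonicity-plus-telescoping strategy you use is in fact the standard device for results of this shape (it is how T\'{o}th's solution and the neighbouring results of Young, DeTemple and Alzer are proved), so your approach is the natural one; what is pleasant in your version is the choice of the variable $m=2n+1$, which turns both endpoints into clean polynomial checks and avoids any separate verification at $n=1$.
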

\begin{proof}
See (\cite{Toth1}, \cite{Toth2}).
\end{proof}
Further in 1993 DeTemple \cite{DeTemple} established the following theorem.
\begin{theorem}\label{DeTemple}
For all natural numbers $n$ we have
\begin{equation*}
\gamma+\log\left(n+\frac{1}{2}\right)+\frac{1}{24(n+1)^2}<H_n<\gamma+\log\left(n+\frac{1}{2}\right)+\frac{1}{24n^2}\,.
\end{equation*}
\end{theorem}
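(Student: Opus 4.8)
The plan is to prove the two one‑sided estimates separately, in each case by producing a monotone sequence converging to $\gamma$ that keeps the relevant quantity on one side of its limit. Set
$$a_n = H_n - \log\Big(n+\tfrac12\Big) - \frac{1}{24n^2}, \qquad b_n = H_n - \log\Big(n+\tfrac12\Big) - \frac{1}{24(n+1)^2}.$$
Since $H_n-\log n\to\gamma$ (recalled above) while $\log(n+\tfrac12)-\log n\to0$ and $n^{-2}\to0$, both sequences tend to $\gamma$. Hence the asserted inequalities are exactly $a_n<\gamma$ and $b_n>\gamma$, so it suffices to show $(a_n)$ is strictly increasing and $(b_n)$ is strictly decreasing: a strictly increasing (resp.\ decreasing) sequence stays strictly below (resp.\ above) its limit.

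The only analytic ingredient I would use is the classical expansion $\log\frac{1+x}{1-x}=2\sum_{k\ge0}\frac{x^{2k+1}}{2k+1}$ for $|x|<1$, together with the two elementary consequences
$$2x+\frac{2x^3}{3} \;<\; \log\frac{1+x}{1-x} \;<\; 2x+\frac{2x^3}{3(1-x^2)},$$
where the left bound drops everything past the cubic term and the right bound majorizes the tail $\sum_{k\ge1}\frac{x^{2k+1}}{2k+1}$ by the geometric series $\frac{x^3}{3}\sum_{k\ge0}x^{2k}$. Choosing $x=\frac{1}{2(n+1)}$ makes $\frac{1+x}{1-x}=\frac{n+3/2}{n+1/2}$, so this yields explicit two‑sided control of the increment $\log(n+\tfrac32)-\log(n+\tfrac12)$ occurring in $a_{n+1}-a_n$ and $b_{n+1}-b_n$.

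For $(a_n)$ I would write, with $m=n+1$,
$$a_{n+1}-a_n = \frac{1}{m} - \log\frac{m+1/2}{m-1/2} + \frac{1}{24}\Big(\frac{1}{(m-1)^2}-\frac{1}{m^2}\Big),$$
insert the upper bound for the logarithm, and reduce $a_{n+1}-a_n>0$ (after clearing denominators) to $(2m-1)^2(2m+1)>8m(m-1)^2$, i.e.\ $12m^2-10m+1>0$, which holds for all $m\ge1$. For $(b_n)$ the analogous identity is $b_{n+1}-b_n = \frac{1}{m} - \log\frac{m+1/2}{m-1/2} + \frac{1}{24}\big(\frac{1}{m^2}-\frac{1}{(m+1)^2}\big)$; inserting the lower bound for the logarithm reduces $b_{n+1}-b_n<0$ to $m(2m+1)\le 2(m+1)^2$, i.e.\ $3m+2\ge0$, which is immediate. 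Combining, $(a_n)\uparrow\gamma$ gives $H_n<\gamma+\log(n+\tfrac12)+\frac{1}{24n^2}$ and $(b_n)\downarrow\gamma$ gives $H_n>\gamma+\log(n+\tfrac12)+\frac{1}{24(n+1)^2}$.

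The step needing genuine care — the only place the argument could fail through carelessness — is the choice of the elementary bounds on $\log\frac{1+x}{1-x}$: they must be simple enough that clearing denominators leaves a polynomial, yet sharp enough that the leftover polynomial is positive for \emph{all} $n\ge1$, not merely asymptotically. In particular the upper bound must retain the factor $\frac{1}{1-x^2}$ (the cruder truncation $2x+\frac{2x^3}{3}+\frac{2x^5}{5}$ is too weak to beat the $\frac{1}{24}$ term), and one should confirm the small values $m=2,3,\dots$ directly, which the inequality $12m^2-10m+1>0$ makes transparent. Everything else is routine algebra. (Alternatively one could phrase the whole argument through the digamma function via $H_n=\gamma+\psi(n+1)$, but the elementary route above is self‑contained.)
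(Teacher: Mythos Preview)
Your argument is correct. The paper itself does not supply a proof of this theorem; it merely cites DeTemple's 1993 note, so there is nothing substantive in the paper to compare against. That said, your route---defining the shifted sequences $a_n=H_n-\log(n+\tfrac12)-\tfrac{1}{24n^2}$ and $b_n=H_n-\log(n+\tfrac12)-\tfrac{1}{24(n+1)^2}$ and establishing their monotonicity via the two-sided estimate $2x+\tfrac{2x^3}{3}<\log\tfrac{1+x}{1-x}<2x+\tfrac{2x^3}{3(1-x^2)}$ with $x=\tfrac{1}{2(n+1)}$---is exactly DeTemple's own method, recast only cosmetically: he bounds the difference $D_n=R_n-R_{n+1}$ (where $R_n=H_n-\log(n+\tfrac12)$) between $\tfrac{1}{24(n+1)^2}-\tfrac{1}{24(n+2)^2}$ and $\tfrac{1}{24n^2}-\tfrac{1}{24(n+1)^2}$ and then telescopes, which is precisely the statement that your $a_n$ increases and your $b_n$ decreases. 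The algebraic reductions you record, $(2m-1)^2(2m+1)>8m(m-1)^2\Leftrightarrow 12m^2-10m+1>0$ and $m(2m+1)<2(m+1)^2\Leftrightarrow 3m+2>0$, are accurate; note only that for the $a$-sequence one needs $m=n+1\ge 2$ (so that $(m-1)^2$ is a genuine denominator), which is automatic since $n\ge 1$.
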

\begin{proof}
See (\cite{DeTemple}, Theorem 1).
\end{proof}
In 1995 Anderson et al. \cite{Anderson} derived result which sharpens the lower bound given in Theorem \ref{Young} if $n\leq5$.
\begin{theorem}
For all natural numbers $n$ we have
\begin{equation*}
\gamma+\log n+\frac{1-\gamma}{n}<H_n<\gamma+\log n+\frac{1}{2n}\,.
\end{equation*}
\end{theorem}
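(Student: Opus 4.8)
The upper bound $H_n<\gamma+\log n+\frac{1}{2n}$ is exactly the right‑hand inequality of Young's Theorem~\ref{Young}, so it can be quoted directly and nothing new is needed there. For the lower bound the plan is to study the auxiliary sequence
\[
b_n:=n\bigl(H_n-\log n-\gamma\bigr)
\]
and to prove that it is strictly increasing. Since $b_1=H_1-\log 1-\gamma=1-\gamma$, this at once gives $b_n>1-\gamma$ for every $n\ge 2$, i.e. $H_n>\gamma+\log n+\frac{1-\gamma}{n}$ (at $n=1$ the displayed lower bound holds with equality rather than strictly, a point worth flagging when the theorem is quoted).

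To analyse $b_n$ I would first compute its one‑step difference. From $H_{n+1}=H_n+\frac{1}{n+1}$ one gets $(n+1)H_{n+1}-nH_n=H_n+1$, and since $\int_n^{n+1}\log t\,dt=(n+1)\log(n+1)-n\log n-1$, a short rearrangement yields
\[
b_{n+1}-b_n=H_n-\gamma-\int_n^{n+1}\log t\,dt=d_n-\gamma,\qquad\text{where}\quad d_n:=H_n-\int_n^{n+1}\log t\,dt .
\]
Hence it suffices to show $d_n>\gamma$ for all $n\ge 1$, and for this I would establish that $d_n$ is strictly decreasing with limit $\gamma$.

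The limit is the easy half: since $\log n<\int_n^{n+1}\log t\,dt<\log(n+1)$, we have $0<\int_n^{n+1}\log t\,dt-\log n<\log\!\bigl(1+\tfrac1n\bigr)\to 0$, and combining this with the defining relation $H_n-\log n\to\gamma$ gives $d_n\to\gamma$. For the monotonicity, write $\int_n^{n+1}\log t\,dt=\int_0^1\log(n+u)\,du$; then
\[
d_{n+1}-d_n=\frac{1}{n+1}-\int_0^1\!\!\int_0^1\frac{du\,dv}{n+u+v},
\]
and since $t\mapsto 1/t$ is strictly convex on $(0,\infty)$ while the average of $n+u+v$ over $[0,1]^2$ equals $n+1$, Jensen's inequality gives $\int_0^1\!\int_0^1\frac{du\,dv}{n+u+v}>\frac{1}{n+1}$, so $d_{n+1}-d_n<0$. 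A strictly decreasing sequence converging to $\gamma$ stays strictly above $\gamma$; therefore $d_n>\gamma$, the sequence $b_n$ is strictly increasing, and the lower bound follows.

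The main obstacle is really this last step: one has to recognise that $d_{n+1}-d_n$ equals $\frac{1}{n+1}$ minus a genuine average of the values of $1/t$ and then invoke convexity; everything else is bookkeeping. As an alternative to the Jensen argument one could expand both $H_n$ and $\int_n^{n+1}\log t\,dt$ by Euler--Maclaurin and check that $d_n-\gamma\sim\frac{1}{24n^2}>0$, but that route demands careful control of the remainder terms and is noticeably less clean.
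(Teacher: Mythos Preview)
Your argument is correct. The upper bound is indeed just Young's inequality, and for the lower bound your monotonicity argument for $b_n=n(H_n-\log n-\gamma)$ goes through cleanly: the computation $b_{n+1}-b_n=d_n-\gamma$ is right, the limit $d_n\to\gamma$ is immediate, and the Jensen step for the strict decrease of $d_n$ is a nice touch (the double integral representation of $d_{n+1}-d_n$ and the strict convexity of $1/t$ do exactly what you claim). You are also right to flag that at $n=1$ the lower bound is attained with equality, so the theorem as stated with a strict sign cannot hold there; this is a defect of the statement, not of your proof.

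As for comparison with the paper: there is essentially nothing to compare. The paper is a survey and its ``proof'' consists solely of the citation ``See (\cite{Anderson}, Theorem 3.1)''. Your write-up therefore supplies strictly more than the paper does --- a short, self-contained argument in place of a bare reference. If one consults the Anderson--Barnard--Richards--Vamanamurthy--Vuorinen paper, their route is the same in spirit (they too work with $n(H_n-\log n-\gamma)$ and show it increases), so your approach is in line with the original source even though the survey under review does not reproduce it.
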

\begin{proof}
See (\cite{Anderson}, Theorem 3.1).
\end{proof}
Next step belongs to Alzer \cite{Alzer1998}. In 1998 he established the following result.
\begin{theorem}
For all natural numbers $n$ we have
\begin{equation*}
\gamma+\log n+\frac{1}{2n+\frac{2\gamma-1}{1-\gamma}}<H_n<\gamma+\log n+\frac{1}{2n+\frac{1}{3}}\,.
\end{equation*}
\end{theorem}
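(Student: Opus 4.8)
The plan is to recast the double inequality in the equivalent form
\[
\frac{1}{2n+\beta}<c_n<\frac{1}{2n+\tfrac13},\qquad\text{where}\quad c_n:=H_n-\log n-\gamma,\quad \beta:=\frac{2\gamma-1}{1-\gamma},
\]
and to deal with the two sides separately. The right-hand inequality is exactly the upper bound in T\'{o}th's theorem above, so nothing remains to prove there. For the left-hand inequality note that $H_1=1$ forces $c_1=1-\gamma=\tfrac{1}{2+\beta}$, so it is an equality at $n=1$; thus the real content is the strict bound $c_n>\frac{1}{2n+\beta}$ for $n\ge2$, and I would read the statement accordingly (equality at $n=1$).

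For that bound I would use DeTemple's theorem to borrow its extra accuracy. Setting $\delta_n:=H_n-\gamma-\log\!\bigl(n+\tfrac12\bigr)$, DeTemple gives $\delta_n>\frac{1}{24(n+1)^2}$, and the identity $c_n=\delta_n+\log\!\bigl(1+\tfrac{1}{2n}\bigr)$ yields
\[
c_n>\log\!\Bigl(1+\frac{1}{2n}\Bigr)+\frac{1}{24(n+1)^2}.
\]
Next I would invoke the elementary inequality $\log(1+t)>\tfrac{2t}{2+t}$ for $t>0$ — immediate, since the difference vanishes at $t=0$ and has derivative $\tfrac{t^2}{(1+t)(2+t)^2}>0$ — which with $t=\tfrac{1}{2n}$ gives $\log\!\bigl(1+\tfrac{1}{2n}\bigr)>\tfrac{1}{2n+1/2}$. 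Hence it suffices to establish
\[
\frac{1}{2n+\tfrac12}+\frac{1}{24(n+1)^2}\ \ge\ \frac{1}{2n+\beta}\qquad(n\ge2).
\]
Since $\tfrac13<\beta<\tfrac25<\tfrac12$, the right-hand gap equals $\frac{1/2-\beta}{(2n+\beta)(2n+1/2)}$, and after clearing denominators the displayed inequality becomes a quadratic inequality in $n$ with coefficients depending only on $\gamma$; a short computation shows it holds for all $n\ge7$, say, and the remaining cases $2\le n\le6$ are disposed of by comparing $H_n$ with $\log n+\gamma+\frac{1}{2n+\beta}$ to sufficient numerical precision.

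The crux is the sharpness of $\beta=\frac{2\gamma-1}{1-\gamma}=0.3652\ldots$: it is strictly below T\'{o}th's constant $\tfrac25$, so the target lower bound for $c_n$ is genuinely stronger than any bound for $c_n$ alone recorded earlier here, and one cannot simply bootstrap from those — it is the sharper base function $\log(n+\tfrac12)$ in DeTemple's estimate that supplies the missing precision. An equivalent, more structural route is to prove that the sequence $\theta_n:=\tfrac{1}{c_n}-2n$ decreases strictly from $\theta_1=\beta$ to $\lim_{n}\theta_n=\tfrac13$ (which would give both sides simultaneously); this amounts to the inequality $\log\!\bigl(1+\tfrac1n\bigr)-\tfrac{1}{n+1}<2c_nc_{n+1}$, but that is delicate because the two sides agree asymptotically through order $n^{-3}$, so it needs expansions of $H_n$ and of the logarithm accurate to order $n^{-4}$. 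Either way, the only other thing to watch is that all numerical comparisons be made rigorous via a certified rational enclosure of $\gamma$, which is routine.
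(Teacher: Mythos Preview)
Your argument is sound. The survey itself supplies no proof of this theorem; it merely cites Alzer's 1998 paper. Alzer's original proof is precisely the ``structural route'' you sketch at the end: he shows that the sequence $\theta_n=\frac{1}{c_n}-2n$ is strictly decreasing, with $\theta_1=\beta$ and $\lim_n\theta_n=\tfrac13$, which simultaneously yields both inequalities together with the optimality of the two constants. Your main line of attack is genuinely different: you recycle T\'{o}th's upper bound verbatim, and for the lower bound you pull the required accuracy from DeTemple's estimate via the identity $c_n=\delta_n+\log(1+\tfrac{1}{2n})$ and the Pad\'e-type bound $\log(1+t)>\tfrac{2t}{2+t}$, reducing everything to a quadratic in $n$ valid from $n\ge7$ plus a finite check. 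This buys you a self-contained argument using only results already stated in the survey, at the cost of not recovering the sharpness claim and of needing some numerical case-checking; Alzer's monotonicity proof is cleaner and gives optimality for free, but requires more analytic machinery (complete monotonicity of a digamma-related function).

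Two small remarks. First, you are right that the lower inequality is an equality at $n=1$; the strict ``$<$'' in the survey's display is simply inaccurate there (Alzer states it with ``$\le$'', equality iff $n=1$). Second, your reduction to the quadratic $(2n+\beta)(2n+\tfrac12)\ge 24(\tfrac12-\beta)(n+1)^2$ does indeed first hold at $n=7$ (the larger root is about $6.76$), so the range $2\le n\le 6$ to be checked by hand is exactly as you say; the margins there are small (at $n=6$ one has $c_6\approx0.081025$ versus $\tfrac{1}{12+\beta}\approx0.080872$), so a certified rational enclosure of $\gamma$ with a handful of correct digits is needed, but nothing more.
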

\begin{proof}
See (\cite{Alzer1998}, Theorem 3).
\end{proof}
In 1999 Negoi \cite{Negoi} formulated and proved the following theorem.
\begin{theorem}
For all natural numbers $n$ we have
\begin{equation*}
\gamma+\log\left(n+\frac{1}{2}+\frac{1}{24n}\right)-\frac{1}{48n^3}<H_n
<\gamma+\log\left(n+\frac{1}{2}+\frac{1}{24n}\right)-\frac{1}{48(n+1)^3}\,.
\end{equation*}
\end{theorem}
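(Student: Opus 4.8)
The plan is to follow the standard route for such refined harmonic-number estimates: rewrite the claim as a statement about the sign of a remainder sequence, and then prove that sequence is monotone. Write
\begin{equation*}
g(n)=n+\frac12+\frac1{24n},\qquad \Phi(n)=H_n-\gamma-\log g(n),
\end{equation*}
so that the asserted double inequality is exactly $-\tfrac1{48n^3}<\Phi(n)<-\tfrac1{48(n+1)^3}$ for all $n\ge1$. Put
\begin{equation*}
u_n=\Phi(n)+\frac1{48n^3},\qquad v_n=\Phi(n)+\frac1{48(n+1)^3}.
\end{equation*}
Since $H_n-\log n\to\gamma$ and $\log g(n)-\log n\to0$, we have $u_n\to0$ and $v_n\to0$; hence it suffices to prove that $(u_n)_{n\ge1}$ is strictly decreasing and $(v_n)_{n\ge1}$ is strictly increasing, for then $u_n>0$ and $v_n<0$ follow at once from the limits.

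Using $H_{n+1}-H_n=\tfrac1{n+1}$ and writing $L_n=\log g(n+1)-\log g(n)$, one obtains
\begin{align*}
u_n-u_{n+1}&=L_n-\frac1{n+1}+\frac1{48n^3}-\frac1{48(n+1)^3},\\
v_{n+1}-v_n&=\frac1{n+1}-L_n+\frac1{48(n+2)^3}-\frac1{48(n+1)^3},
\end{align*}
so both monotonicity statements reduce to the single two-sided estimate
\begin{equation*}
\frac1{n+1}-\frac1{48n^3}+\frac1{48(n+1)^3}<L_n<\frac1{n+1}-\frac1{48(n+1)^3}+\frac1{48(n+2)^3}\qquad(n\ge1),
\end{equation*}
which is internally consistent because the two outer expressions differ by $\tfrac1{48}$ times the (positive) second forward difference at $n$ of the convex function $x\mapsto x^{-3}$.

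To bound $L_n$ I would use the classical substitution $g(n+1)/g(n)=(1+\tau_n)/(1-\tau_n)$ with
\begin{equation*}
\tau_n=\frac{g(n+1)-g(n)}{g(n+1)+g(n)}=\frac{1-\frac1{24n(n+1)}}{2n+2+\frac{2n+1}{24n(n+1)}},
\end{equation*}
so that $L_n=2\sum_{k\ge0}\tau_n^{2k+1}/(2k+1)$ and, since $\tau_n>0$,
\begin{equation*}
2\tau_n+\frac23\tau_n^3+\frac25\tau_n^5<L_n<2\tau_n+\frac23\tau_n^3+\frac25\tau_n^5+\frac27\cdot\frac{\tau_n^7}{1-\tau_n^2}.
\end{equation*}
Because $\tau_n$ is an explicit rational function of $n$ with $\tau_n=\tfrac1{2n}+O(n^{-2})$, one substitutes it into both bounds, expands in powers of $1/n$, clears denominators, and compares with the outer quantities in the sandwich for $L_n$; each inequality thereby becomes a polynomial inequality in $n$, true for all large $n$ by comparison of leading coefficients, and the finitely many small $n$ are checked by direct computation. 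The main obstacle is the required precision: the differences $u_n-u_{n+1}$ and $v_{n+1}-v_n$ turn out to be only of order $n^{-5}$ (the $n^{-3}$- and $n^{-4}$-terms cancel), so the expansions of $2\tau_n$, $\tfrac23\tau_n^3$ and $\tfrac25\tau_n^5$ must be carried through order $n^{-5}$ — the $\tau_n^7$-tail, being $O(n^{-7})$, is harmless — and one must then verify that the resulting estimates really do hold all the way down to $n=1$ rather than merely asymptotically; DeTemple's coarser bound in Theorem \ref{DeTemple} serves as a convenient consistency check on the shape of this computation.
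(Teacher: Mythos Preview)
The paper does not supply its own argument for this theorem; its ``proof'' is simply the citation ``See (\cite{Negoi}, Theorem 1)'', so there is no in-paper proof to compare your proposal against. Your strategy --- reduce the double inequality to the monotonicity of the two remainder sequences $u_n$ and $v_n$, exploit their common limit $0$, and then convert the monotonicity into a two-sided rational bound on $L_n=\log g(n+1)-\log g(n)$ via the $\operatorname{artanh}$ series --- is exactly the standard device used for results of this type (it is how DeTemple handled the coarser estimate you cite, and Negoi's original argument follows the same pattern). The algebraic setup you give is correct: the expressions for $u_n-u_{n+1}$, $v_{n+1}-v_n$, and $\tau_n$ check out, and your remark that the gap between the two required bounds on $L_n$ equals $\tfrac{1}{48}$ times a second forward difference of $x^{-3}$ is a nice sanity check.

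The only point on which your write-up is a sketch rather than a proof is the final step: ``clears denominators and compares'' hides a genuine computation, since, as you correctly note, the first several terms cancel and the decisive sign appears only at order $n^{-5}$. That computation is routine but must actually be carried out (and the sign of the resulting polynomial in $n$ verified for all $n\ge1$, not merely asymptotically); once it is, your argument is complete and matches the approach of the cited source.
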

\begin{proof}
See (\cite{Negoi}, Theorem 1).
\end{proof}
In 2005 Batir \cite{Batir2005} provided remarkable upper and lower bounds for $H_n$.
\begin{theorem}
For any integer $n\geq1$ the following inequalities involving harmonic numbers hold.
\begin{equation*}
\gamma+\log\left(n+\frac{1}{2}\right)<H_n\leq \gamma+\log(n-1+e^{1-\gamma})\,,
\end{equation*}
\begin{equation*}
\log\left(\frac{\pi^2}{6}\right)-\log\big(e^{1/(n+1)}-1\big)<H_n<\gamma-\log\big(e^{1/(n+1)}-1\big)\,.
\end{equation*}
\end{theorem}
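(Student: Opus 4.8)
The plan is to route all four inequalities through the single sequence $u_n:=e^{H_n-\gamma}$ and its forward differences. Since $H_{n+1}=H_n+\frac1{n+1}$, one has the exact identity $u_{n+1}-u_n=e^{H_n-\gamma}\bigl(e^{1/(n+1)}-1\bigr)$, so that, with
\[
c_n:=e^{H_n}\bigl(e^{1/(n+1)}-1\bigr)=e^{\gamma}\,(u_{n+1}-u_n),
\]
the two inequalities on the second displayed line are nothing but
\[
\frac{\pi^2}{6}<c_n<e^{\gamma}\qquad\Longleftrightarrow\qquad \frac{\pi^2}{6e^{\gamma}}<u_{n+1}-u_n<1 .
\]
Granting this, the first displayed line follows with no extra analytic input. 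Its lower half, $H_n>\gamma+\log\bigl(n+\tfrac12\bigr)$, is already contained in DeTemple's Theorem~\ref{DeTemple}, whose left-hand side exceeds $\gamma+\log(n+\tfrac12)$ by $\frac1{24(n+1)^2}>0$. Its upper half follows by summation: from $u_{k+1}-u_k<1$ for $1\le k\le n-1$ and $u_1=e^{1-\gamma}$ we get $u_n<e^{1-\gamma}+(n-1)$, i.e. $H_n<\gamma+\log(n-1+e^{1-\gamma})$, the sum being empty and the relation an equality precisely at $n=1$. So the whole theorem reduces to the two-sided bound $\pi^2/6<c_n<e^{\gamma}$.

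The left-hand bound $c_n>\pi^2/6$ has a comfortable, fixed-size margin (the two sides differ by $\gamma-\log(\pi^2/6)\approx 0.08$), and I would dispose of it crudely: from $e^{1/(n+1)}-1>\frac1{n+1}$ and the classical fact that $H_n-\log n$ decreases to $\gamma$, hence $H_n-\log n>\gamma$, one gets $c_n>\frac{e^{H_n}}{n+1}>\frac{n}{n+1}e^{\gamma}$, and $\frac{n}{n+1}e^{\gamma}>\pi^2/6$ once $n\ge 13$; the finitely many remaining values $n\le 12$ are checked by direct evaluation.

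The real work is the right-hand bound $c_n<e^{\gamma}$, equivalently $H_n<\gamma-\log\bigl(e^{1/(n+1)}-1\bigr)$, where the gap between the two sides is only $O(n^{-2})$, so the argument must be quantitatively sharp. The plan is to sandwich each ingredient. For $\frac1{e^{1/(n+1)}-1}$ I would use the Bernoulli expansion
\[
\frac{1}{e^{x}-1}=\frac1x-\frac12+\frac{x}{12}-\frac{x^{3}}{720}+\cdots,
\]
whose terms decrease in modulus for $0<x\le\tfrac12$, so that truncating after the $-\frac{x^3}{720}$ term gives a valid lower bound at $x=\frac1{n+1}$, namely $n+\tfrac12+\frac1{12(n+1)}-\frac1{720(n+1)^{3}}$. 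For $e^{H_n-\gamma}=n\,e^{H_n-\log n-\gamma}$ I would feed in the upper bound of DeTemple's Theorem~\ref{DeTemple}, $H_n-\log n-\gamma<\log\bigl(1+\tfrac1{2n}\bigr)+\tfrac1{24n^{2}}$, together with the elementary $e^{t}\le 1+\frac{t}{1-t}$ for $0<t<1$, obtaining $e^{H_n-\gamma}<\bigl(n+\tfrac12\bigr)\frac{24n^{2}}{24n^{2}-1}$. Both sides are thus $n+\tfrac12$ plus an explicit rational perturbation, and the desired inequality collapses to an elementary rational inequality in $n$, valid for all $n\ge 2$; the single case $n=1$ is settled by the exact value $c_1=e^{3/2}-e<e^{\gamma}$. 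I expect this last step to be the main obstacle: Young's cruder bound $H_n<\gamma+\log n+\frac1{2n}$ is not good enough here, since it overshoots $e^{H_n-\gamma}$ by a term of order $\frac1{8n}$, which exceeds the $O(n^{-2})$ slack one has to spend, so a DeTemple-type bound is essential and the constants must be tracked carefully, with the handful of smallest $n$ isolated and checked separately.
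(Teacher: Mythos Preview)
The paper itself does not supply an argument here; it refers to Batir (2005), where these bounds are Corollary~2.2 of inequalities for the digamma function, obtained by showing that certain auxiliary functions of a real variable are monotone (via polygamma estimates) and then specialising to integer arguments. Your route is genuinely different: you stay on the integer side, organise everything around the forward differences of $u_n=e^{H_n-\gamma}$, reduce all four bounds to the two-sided estimate $\pi^2/6<c_n<e^{\gamma}$ plus a telescope, and feed in DeTemple's Theorem (already quoted in the survey) as the sole analytic input. The plan is sound: the enveloping property of the Bernoulli expansion of $1/(e^{x}-1)$ is standard for $|x|<2\pi$, and the final rational inequality does hold for $n\ge 2$ with roughly $\tfrac{1}{24n}$ to spare, exactly matching your remark that Young's cruder bound would overshoot. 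What your approach buys is a transparent dependency structure---the first line's upper bound is exhibited as a consequence of the second line's, and both descend from DeTemple---at the cost of consuming DeTemple as a black box; Batir's approach, by contrast, is self-contained, yields continuous digamma inequalities of independent interest, and does not rely on any prior harmonic-number bound.
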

\begin{proof}
See (\cite{Batir2005}, Corollary 2.2).
\end{proof}
Next year  Alzer \cite{Alzer2006} refines the result of Batir.
\begin{theorem}
For all natural numbers $n$ we have
\begin{equation*}
1+\log\big(\sqrt{e}-1\big)-\log\big(e^{1/(n+1)}-1\big)\leq H_n<\gamma-\log\big(e^{1/(n+1)}-1\big)\,.
\end{equation*}
\end{theorem}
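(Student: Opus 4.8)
The plan is to split the statement into its two halves. The right-hand bound $H_n<\gamma-\log\bigl(e^{1/(n+1)}-1\bigr)$ is nothing new: it is exactly the upper estimate already proved by Batir and quoted above, so I would simply invoke that result. The whole work lies in the left-hand bound, which is the genuine refinement of Batir.

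For the left-hand inequality I would introduce the auxiliary sequence
\[
f(n):=H_n+\log\bigl(e^{1/(n+1)}-1\bigr),\qquad n\ge 1 .
\]
Since $f(1)=1+\log\bigl(e^{1/2}-1\bigr)=1+\log(\sqrt{e}-1)$, the desired inequality is equivalent to $f(n)\ge f(1)$ for every $n\ge 1$. Hence it suffices to prove that $f$ is strictly increasing; this also gives, as a bonus, that the bound is best possible, with equality precisely at $n=1$.

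To prove monotonicity I would compute
\[
f(n+1)-f(n)=\frac{1}{n+1}-\log\frac{e^{1/(n+1)}-1}{e^{1/(n+2)}-1},
\]
and then, after exponentiating, clearing the positive denominator and dividing through by $e^{1/(n+1)}$, reduce the inequality $f(n+1)-f(n)>0$ to the equivalent statement $e^{1/(n+2)}+e^{-1/(n+1)}>2$. Putting $u:=\tfrac{1}{n+1}\in(0,\tfrac12]$ and using the identity $\tfrac{1}{n+2}=\tfrac{u}{1+u}$, this becomes $h(u):=e^{u/(1+u)}+e^{-u}-2>0$. It then remains to check $h(u)>0$ for $u>0$. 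Here $h(0)=0$, and $h'(u)>0$ is, after taking logarithms, the same as $\dfrac{u(2+u)}{1+u}>2\log(1+u)$; denoting by $\phi(u)$ the difference of the two sides, one has $\phi(0)=0$ and, after simplification,
\[
\phi'(u)=\frac{u^{2}}{(1+u)^{2}}>0\qquad(u>0),
\]
so $\phi>0$ on $(0,\infty)$, whence $h'>0$ and therefore $h>0$ on $(0,\infty)$; in particular $h(u)>0$ for $u\in(0,\tfrac12]$, which is what was needed.

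The argument is entirely elementary, and I do not expect a serious obstacle. The two places that need a little care are the algebraic reduction to the clean form $e^{1/(n+2)}+e^{-1/(n+1)}>2$ (keeping track of the directions of the inequalities) and the substitution $u=1/(n+1)$; once that is done, the collapse of $\phi'$ to $u^{2}/(1+u)^{2}$ does all the work.
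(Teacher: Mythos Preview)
Your argument is correct. The reduction of $f(n+1)-f(n)>0$ to $e^{1/(n+2)}+e^{-1/(n+1)}>2$ checks out, and the computation $\phi'(u)=u^{2}/(1+u)^{2}$ is right, so the monotonicity of $f$ follows cleanly; the identification of $f(1)=1+\log(\sqrt{e}-1)$ then gives the sharp lower bound with equality at $n=1$, and the upper bound is indeed just Batir's earlier estimate.

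As for comparison with the paper: there is nothing to compare. This is a survey, and the paper's ``proof'' consists solely of the citation ``See (\cite{Alzer2006}, Theorem 1)''; no argument is reproduced. Your self-contained proof is in fact very much in the spirit of Alzer's original, which likewise proceeds by showing that the sequence $H_n+\log(e^{1/(n+1)}-1)$ is increasing and attains its minimum at $n=1$. So you have essentially reconstructed the source argument rather than diverged from it.
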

\begin{proof}
See (\cite{Alzer2006}, Theorem 1).
\end{proof}
In 2009 Chen \cite{Chen2009} obtained result which sharpens the upper bound given in Theorem \ref{DeTemple}.
\begin{theorem}
For all natural numbers $n$ we have
\begin{equation*}
H_n<\gamma+\log\left(n+\frac{1}{2}\right)+\frac{1}{24\left(n+\frac{1}{2}\right)^2}\,.
\end{equation*}
\end{theorem}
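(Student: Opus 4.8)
The plan is to control the sequence
\[
a_n \;=\; H_n - \log\left(n+\tfrac12\right) - \frac{1}{24\left(n+\tfrac12\right)^2},
\]
by proving two things: that $(a_n)_{n\ge1}$ is strictly increasing, and that $\lim_{n\to\infty}a_n=\gamma$. Together these force $a_n<\gamma$ for every $n\ge1$, which is exactly the asserted inequality. The limit is immediate from the material recalled in Section~2: since $H_n-\log n\to\gamma$, $\log(n+\tfrac12)-\log n=\log\!\big(1+\tfrac{1}{2n}\big)\to0$, and $\tfrac{1}{24(n+1/2)^2}\to0$, we get $a_n\to\gamma$.

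For the monotonicity I would compute $a_{n+1}-a_n$ directly. Using $H_{n+1}-H_n=\tfrac{1}{n+1}$ and setting $x=n+1\ge2$, one finds $a_{n+1}-a_n=F(x)$, where
\[
F(x) \;=\; \frac1x - \log\frac{x+\frac12}{x-\frac12} + \frac{1}{24}\left(\frac{1}{\left(x-\frac12\right)^2} - \frac{1}{\left(x+\frac12\right)^2}\right).
\]
So it suffices to show $F(x)>0$ for all real $x\ge2$, and I would do this by showing $F$ is strictly decreasing on $[2,\infty)$ while $F(x)\to0$ as $x\to\infty$; the latter is clear term by term. For the former, differentiating and simplifying (using $(x+\tfrac12)^3-(x-\tfrac12)^3=3x^2+\tfrac14$ and $-\tfrac1{x^2}+\tfrac1{x^2-1/4}=\tfrac{1/4}{x^2(x^2-1/4)}$) gives
\[
F'(x) \;=\; \frac{1}{x^2-\frac14}\left(\frac{1}{4x^2} - \frac{3x^2+\frac14}{12\left(x^2-\frac14\right)^2}\right),
\]
and clearing the positive factor $12x^2\left(x^2-\tfrac14\right)^2$ shows the bracket has the sign of $3\left(x^2-\tfrac14\right)^2-x^2\left(3x^2+\tfrac14\right)=-\tfrac74x^2+\tfrac{3}{16}$, which is negative for $x\ge2$. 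Hence $F'(x)<0$ on $[2,\infty)$, so $F(x)>\lim_{t\to\infty}F(t)=0$ there, and $(a_n)_{n\ge1}$ is strictly increasing.

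Combining the two facts, $(a_n)$ increases to $\gamma$, so $a_n<\gamma$ for all $n\ge1$, i.e.\ $H_n<\gamma+\log\left(n+\tfrac12\right)+\tfrac{1}{24\left(n+\frac12\right)^2}$. The only step where genuine work occurs is the sign analysis of $F'$ — equivalently, obtaining an estimate for $\log\frac{x+1/2}{x-1/2}$ sharp enough to outweigh $\tfrac1x$ together with the small correction term; once that elementary polynomial inequality is in hand, the remainder is bookkeeping. (One may also verify the base case $n=1$ numerically as a sanity check, though it is subsumed by the general argument.)
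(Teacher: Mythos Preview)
Your argument is correct. The computation of $F'(x)$ checks out line by line: the identity $(x+\tfrac12)^3-(x-\tfrac12)^3=3x^2+\tfrac14$ and the partial-fraction step $-\tfrac{1}{x^2}+\tfrac{1}{x^2-1/4}=\tfrac{1/4}{x^2(x^2-1/4)}$ are both right, and the resulting polynomial $-\tfrac{7}{4}x^2+\tfrac{3}{16}$ is indeed negative for $x\ge2$ (in fact for $x>\sqrt{3/28}$). Hence $F$ is strictly decreasing to $0$ on $[2,\infty)$, so $F(x)>0$ there, $(a_n)$ increases strictly, and together with $a_n\to\gamma$ you get $a_n<\gamma$ for all $n\ge1$.

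As for comparison with the paper: this is a survey article, and its ``proof'' of this theorem consists solely of a pointer to Chen's 2009 paper (Remark~4 there), so there is no in-paper argument to compare against. Your method --- showing the deficit sequence is monotone increasing to its limit $\gamma$ --- is exactly the standard technique for such sharp harmonic-number bounds, and is in the spirit of DeTemple's original proof of the weaker two-sided estimate (Theorem~\ref{DeTemple} above). It is self-contained and would serve perfectly well as a replacement for the bare citation.
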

\begin{proof}
See (\cite{Chen2009}, Remark 4).
\end{proof}
Further in 2010 Chen \cite{Chen2010} established  the following result.
\begin{theorem}
For all natural numbers $n$ we have
\begin{equation*}
\gamma+\log\left(n+\frac{1}{2}\right)+\frac{1}{24(n+a)^2}
\leq H_n<\gamma+\log\left(n+\frac{1}{2}\right)+\frac{1}{24(n+b)^2}
\end{equation*}
with the best possible constants
\begin{equation*}
a=\frac{1}{\sqrt{24[1-\gamma-\log(3/2)]}} \quad \mbox{ and } \quad   b=\frac{1}{2} \,.
\end{equation*}
\end{theorem}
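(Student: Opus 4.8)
The plan is to convert the two-sided estimate into a monotonicity statement for a single sequence. Put $R_n:=H_n-\gamma-\log\!\bigl(n+\tfrac12\bigr)$. By DeTemple's theorem (Theorem~\ref{DeTemple}) one has $R_n>\frac{1}{24(n+1)^2}>0$, so $c_n:=\bigl(24R_n\bigr)^{-1/2}-n$ is well defined. Since $n+\tfrac12>0$ and the relevant values of $a,b$ keep $n+a$ and $n+b$ positive, inverting and taking square roots shows that the asserted chain $\frac{1}{24(n+a)^2}\le R_n<\frac{1}{24(n+b)^2}$ is equivalent to $b<c_n\le a$ for every $n\ge1$. Hence it suffices to prove that $(c_n)_{n\ge1}$ is strictly decreasing with $c_n\to\tfrac12$: the supremum is then $c_1$, attained (matching the non-strict lower inequality), the infimum is $\tfrac12$, not attained (matching the strict upper inequality), and evaluating $R_1=1-\gamma-\log\tfrac32$ pins down $c_1$, hence $a$, in closed form, while $b=\tfrac12$.

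The upper half is almost free. Unwinding the definition, $c_n>\tfrac12$ is literally $H_n<\gamma+\log(n+\tfrac12)+\frac{1}{24(n+\tfrac12)^2}$, i.e.\ Chen's 2009 bound recorded just above. For the optimality of $b=\tfrac12$ I would invoke the Euler--Maclaurin expansions $H_n=\log n+\gamma+\frac{1}{2n}-\frac{1}{12n^2}+O(n^{-4})$ and $\log(n+\tfrac12)=\log n+\frac{1}{2n}-\frac{1}{8n^2}+\frac{1}{24n^3}+O(n^{-4})$, which give $R_n=\frac{1}{24n^2}-\frac{1}{24n^3}+O(n^{-4})$ and hence $(24R_n)^{-1/2}=n+\tfrac12+O(n^{-1})$, so $c_n\to\tfrac12$; consequently no $b>\tfrac12$ is admissible. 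The non-strict lower bound for all $n$, and the optimality of $a=c_1$ (forced by equality at $n=1$), then reduce to $c_n\le c_1$ for $n\ge2$, which follows once monotonicity is known.

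The core of the argument is the monotonicity $c_{n+1}<c_n$. With $u_n:=(24R_n)^{-1/2}$ this reads $u_{n+1}-u_n<1$, and since $u_{n+1}^2-u_n^2=\tfrac{1}{24}\bigl(R_n^{-1}-R_{n+1}^{-1}\bigr)$, clearing denominators turns it into
\[
R_n-R_{n+1}\;<\;2\sqrt{6}\,\bigl(R_{n+1}\sqrt{R_n}+R_n\sqrt{R_{n+1}}\bigr),\qquad R_n-R_{n+1}=\log\!\Bigl(1+\tfrac{1}{n+1/2}\Bigr)-\tfrac{1}{n+1}.
\]
That $R_n-R_{n+1}>0$ (so $(R_n)$ decreases and $(u_n)$ increases) follows from $\log(1+t)>\tfrac{2t}{2+t}$ with $t=\tfrac{1}{n+1/2}$, so what the display really asserts is the \emph{size} of the increment, not its sign. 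The obstacle is that both sides are $\sim\frac{1}{12n^3}$ with the \emph{same} leading coefficient, so the crude envelopes $\frac{1}{24(n+1)^2}<R_n<\frac{1}{24(n+1/2)^2}$ coming from DeTemple and Chen are not precise enough to close the gap. The plan is to bound the left side from above by the easy estimate $R_n-R_{n+1}<\frac{1}{12(n+1/2)^3}$ (e.g.\ from the integral form of $\log(1+t)-\tfrac{2t}{2+t}$, using $(1+t)(2+t)^2\ge4$), and to bound $R_n$ and $R_{n+1}$ from below by a refinement of DeTemple's inequality — either via the standard representation of $H_n-\gamma$ as a digamma value together with polygamma (trigamma) estimates, or by an induction carrying a lower envelope $R_n>\frac{1}{24(n+\beta_n)^2}$ with $\beta_n\downarrow\tfrac12$ — then substitute to verify the inequality for all $n\ge N$ and dispose of the finitely many $n<N$ by direct computation. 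Obtaining a lower bound for $R_n$ sharp enough to beat the matching leading terms is the genuinely hard step; granting it, $c_{n+1}<c_n$ follows and the remaining assertions fall out as indicated.
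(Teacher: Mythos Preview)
The paper does not actually prove this theorem; its ``proof'' is the one-line reference ``See (\cite{Chen2010}, Theorem 1)'', so there is no argument here to compare against. Your reformulation via $c_n=(24R_n)^{-1/2}-n$ and the reduction to monotonicity plus a limit is sound and is, in fact, the strategy Chen uses.

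That said, your proposal has a genuine gap, and you flag it yourself: the monotonicity $c_{n+1}<c_n$ is not established. You correctly observe that both sides of
\[
R_n-R_{n+1}\;<\;2\sqrt{6}\,\bigl(R_{n+1}\sqrt{R_n}+R_n\sqrt{R_{n+1}}\bigr)
\]
are asymptotically $\frac{1}{12n^3}$, so the DeTemple and Chen envelopes are too crude, and you then write ``granting it'' for the sharper lower bound on $R_n$ that would close the argument. Outlining ``trigamma estimates or an inductive envelope, then check small $n$ directly'' is a reasonable plan but is not a proof: you have not produced the refined inequality, nor the threshold $N$, nor the finite verification. This is precisely the place where all the work lives; until it is carried out, the central claim is assumed.

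A smaller point worth noting: the value of $a$ your argument produces is $c_1=(24R_1)^{-1/2}-1$ with $R_1=1-\gamma-\log(3/2)$, which is the correct consequence of equality at $n=1$. This differs by a missing ``$-1$'' from the formula printed above (numerically $a\approx 0.551$ versus $\approx 1.551$); the discrepancy is a typo in the survey's statement rather than an error in your derivation.
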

\begin{proof}
See (\cite{Chen2010}, Theorem 1).
\end{proof}

In 2011 Alzer \cite{Alzer2011} proved several interesting results.
\begin{theorem}
For all integers $n\geq2$ we have
\begin{equation*}
\alpha\frac{\log(\log n+\gamma)}{n^2}<H^{1/n}_n-H^{1/(n+1)}_{n+1}<\frac{\log(\log n+\gamma)}{n^2}
\end{equation*}
with the best possible constant factors
\begin{equation*}
\alpha=\frac{6\sqrt{6}-2\sqrt[3]{396}}{3\log(\log 2+\gamma)}
\end{equation*}
\end{theorem}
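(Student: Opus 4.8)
The plan is to recast the double inequality in terms of the sequence
\[
\phi(n)=\frac{n^{2}\bigl(H_{n}^{1/n}-H_{n+1}^{1/(n+1)}\bigr)}{\log(\log n+\gamma)}\qquad(n\ge2),
\]
which is well defined because $\log2+\gamma>1$, so $\log(\log n+\gamma)>0$ for every $n\ge2$. In this form the assertion is equivalent to three statements: (i) $\phi(2)=\alpha$; (ii) $\lim_{n\to\infty}\phi(n)=1$; and (iii) $\phi$ is strictly increasing. Granting these, $\alpha=\phi(2)\le\phi(n)<1$ for all $n\ge2$ (strict on the left for $n\ge3$), and no constant larger than $1$ is admissible on the right, nor smaller than $\alpha$ on the left — this is the ``best possible'' clause.

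For (i) one computes directly: $H_{2}=3/2$ and $H_{3}=11/6$ give $H_{2}^{1/2}=\sqrt6/2$ and $H_{3}^{1/3}=(11/6)^{1/3}=\sqrt[3]{396}/6$, whence
\[
\phi(2)=\frac{4\bigl(\tfrac{\sqrt6}{2}-\tfrac{\sqrt[3]{396}}{6}\bigr)}{\log(\log2+\gamma)}=\frac{6\sqrt6-2\sqrt[3]{396}}{3\log(\log2+\gamma)}=\alpha .
\]
For (ii) I would use the expansion $H_{n}=\log n+\gamma+\tfrac1{2n}+O(n^{-2})$, made effective by the two-sided bounds of Young or T\'oth quoted above. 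Writing $L(x)=\log(\log x+\gamma)$ and $H_{n}^{1/n}=\exp\!\bigl(\tfrac{\log H_{n}}{n}\bigr)$, one gets $\log H_{n}=L(n)+O\!\bigl(\tfrac1{n\log n}\bigr)$, hence $H_{n}^{1/n}=1+\tfrac{L(n)}{n}+O\!\bigl(\tfrac{L(n)^{2}}{n^{2}}\bigr)$; since the mean value theorem applied to $x\mapsto L(x)/x$ (with derivative $\tfrac1{x^{2}(\log x+\gamma)}-\tfrac{L(x)}{x^{2}}$) gives $\tfrac{L(n)}{n}-\tfrac{L(n+1)}{n+1}=\tfrac{L(n)}{n^{2}}+O\!\bigl(\tfrac1{n^{2}\log n}\bigr)$, subtracting the two expansions yields $H_{n}^{1/n}-H_{n+1}^{1/(n+1)}=\tfrac{L(n)}{n^{2}}\bigl(1+o(1)\bigr)$, i.e. $\phi(n)\to1$; in particular the supremum $1$ is not attained.

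The heart of the proof is (iii). Here I would pass to a real variable, extending $H_{n}$ to the smooth function $H(x)=\int_{0}^{1}\tfrac{1-t^{x}}{1-t}\,dt$ and $\phi$ correspondingly to a smooth function $\Phi$ on $[2,\infty)$, and aim to prove $\Phi'(x)>0$. A lengthy differentiation expresses the sign of $\Phi'$ through the values of $H,H',L,L'$ at $x$ and $x+1$; inserting the sharp elementary two-sided bounds for $H(x)$ (for instance DeTemple's or Negoi's refinements recorded earlier) together with an effective form of $H'(x)=\tfrac1x+O(x^{-2})$ turns the required positivity into an inequality between explicit elementary functions of $x$, which one settles by calculus for $x\ge N_{0}$ with an effective threshold $N_{0}$; the finitely many integers $2\le n\le N_{0}$ are then verified numerically. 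Combining (i)--(iii) gives the theorem, with the left constant realised at $n=2$ and the right constant realised in the limit.

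The main obstacle is exactly this monotonicity step. The subtlety is that in $H_{n}^{1/n}$ the slowly varying quantity $\log(\log n+\gamma)$ occupies an exponent of size $1/n$, so in the expansion of $H_{n}^{1/n}-H_{n+1}^{1/(n+1)}$ the remainder is only a factor of order $1/(\log n\,\log\log n)$ smaller than the main term; a crude estimate will not even yield a usable $N_{0}$. One therefore has to push the asymptotic expansion of $\phi(n)$ far enough, with fully explicit remainders drawn from the sharp $H_{n}$-inequalities, so that monotonicity becomes transparent for all large $n$ while the residual finite verification stays genuinely finite — controlling those remainders tightly is where the real work lies.
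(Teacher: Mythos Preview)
The paper does not actually prove this theorem: its entire proof is the single line ``See (\cite{Alzer2011}, Theorem~3.1)'', so there is no in-paper argument to compare your proposal against.

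Judged on its own terms, your reduction is the natural one: with $\phi(n)=n^{2}\bigl(H_{n}^{1/n}-H_{n+1}^{1/(n+1)}\bigr)/\log(\log n+\gamma)$ the statement is equivalent to $\phi(2)=\alpha$, $\phi(n)\to1$, and strict monotonicity of $\phi$. Your computation for (i) is correct---and it shows, incidentally, that the left inequality as printed cannot be strict at $n=2$; this is a minor transcription slip in the survey, not in your argument. Your sketch for (ii) is standard and sound.

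The genuine gap is (iii). You lay out a programme---extend to a real variable via $H(x)=\int_{0}^{1}(1-t^{x})/(1-t)\,dt$, differentiate, feed in sharp explicit bounds for $H$ and $H'$, reduce to an elementary inequality valid beyond some effective $N_{0}$, then check the finitely many remaining cases---but you execute none of it, and you correctly diagnose why it is delicate: the error in the expansion of $\phi(n)$ is only about a factor $1/(\log n\,\log\log n)$ below the main term, so crude bounds give no usable $N_{0}$. As it stands this is a reasonable plan, not a proof; the monotonicity step is precisely where the substance of Alzer's argument resides, and without it neither the lower bound for $n\ge3$ nor the sharpness of the upper constant $1$ is established.
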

\begin{proof}
See (\cite{Alzer2011}, Theorem 3.1).
\end{proof}

\begin{theorem}
For all integers $n\geq4$ we have
\begin{equation*}
H^{1/n}_n<\sqrt{H^{1/(n-1)}_{n-1}H^{1/(n+1)}_{n+1}}\,.
\end{equation*}
\end{theorem}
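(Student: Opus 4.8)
The plan is to deduce the inequality from the strict convexity of an auxiliary function of a real variable. Let $\psi$ be the digamma function and set $\phi(x)=\frac1x\log\!\left(\psi(x+1)+\gamma\right)$ for $x\ge1$; since $\psi(n+1)+\gamma=H_n$, we have $\phi(n)=\log\!\left(H_n^{1/n}\right)$. If $\phi$ is strictly convex on $[4,\infty)$, then for every integer $n\ge5$ the three points $n-1,n,n+1$ lie in $[4,\infty)$, so $\phi(n)<\tfrac12\phi(n-1)+\tfrac12\phi(n+1)$, and exponentiating this is exactly $H_n^{1/n}<\sqrt{H_{n-1}^{1/(n-1)}H_{n+1}^{1/(n+1)}}$. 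The single remaining case $n=4$ can be checked directly: with $H_3=11/6$, $H_4=25/12$, $H_5=137/60$ and after raising to the $60$th power, the claim becomes $(25/12)^{15}<(11/6)^{10}(137/60)^6$, a comparison of two explicit rationals.

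To obtain the convexity, write $h(x)=\log\!\left(\psi(x+1)+\gamma\right)$. A short computation gives
\[
x^3\phi''(x)=x^2h''(x)-2xh'(x)+2h(x)=:g(x),
\]
and one more differentiation yields $g'(x)=x^2h'''(x)$. Hence it suffices to prove that $h'''>0$ on $(0,\infty)$ and that $g(4)>0$; the first makes $g$ strictly increasing, so the second forces $g>0$ on $[4,\infty)$, i.e.\ $\phi''>0$ there, and $\phi$ is strictly convex on $[4,\infty)$ as needed.

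For the sign of $h'''$, put $P(x)=\psi(x+1)+\gamma$. From $P(x)=\sum_{k\ge1}\frac{x}{k(x+k)}$ and the standard polygamma series one reads off, for $x>0$,
\[
P>0,\qquad P'=\sum_{k\ge1}\frac1{(x+k)^2}>0,\qquad P''=-2\sum_{k\ge1}\frac1{(x+k)^3}<0,\qquad P'''=6\sum_{k\ge1}\frac1{(x+k)^4}>0.
\]
Since $h=\log P$,
\[
h'''=\frac{P'''}{P}-\frac{3P'P''}{P^2}+\frac{2(P')^3}{P^3},
\]
and every one of the three summands is positive (the middle one because $P''<0$), so $h'''>0$ throughout $(0,\infty)$.

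It remains to verify $g(4)>0$. Here $P(4)=25/12$, $P'(4)=\sum_{k\ge5}k^{-2}$ and $P''(4)=-2\sum_{k\ge5}k^{-3}$, and numerically $g(4)\approx0.063$; to turn this into a proof it is enough to trap the two tail sums between consecutive integral estimates, which pins $P'(4)$ and $P''(4)$ into intervals narrow enough to certify $g(4)>0$. I expect this to be the only delicate step, precisely because $g(4)$ is small while $g(3)\approx-0.325<0$: the function $\phi$ is genuinely not convex on $[3,4]$, which is why the case $n=4$ must be treated separately and why the hypothesis $n\ge4$ is sharp---the inequality already fails for $n=2$ and $n=3$.
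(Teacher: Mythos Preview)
The paper under review is a survey; its ``proof'' of this theorem is merely the citation ``See (\cite{Alzer2011}, Theorem~3.2)'', so there is no argument in the paper to compare against. Your approach must therefore be judged on its own merits.

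Your strategy is sound. Setting $h(x)=\log\bigl(\psi(x+1)+\gamma\bigr)$ and $\phi(x)=h(x)/x$, the identities
\[
x^3\phi''(x)=x^2h''(x)-2xh'(x)+2h(x)=:g(x),\qquad g'(x)=x^2h'''(x)
\]
are correct, and the decomposition $h'''=\dfrac{P'''}{P}-\dfrac{3P'P''}{P^2}+\dfrac{2(P')^3}{P^3}$ with each summand positive (using $P>0$, $P'>0$, $P''<0$, $P'''>0$) is a clean way to obtain $h'''>0$ on $(0,\infty)$. Hence $g$ is strictly increasing, and once $g(4)>0$ is secured you get $\phi''>0$ on $[4,\infty)$, which handles every $n\ge 5$. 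Your remark that $g(3)<0$ is also correct and explains why $n=4$ must be treated separately and why the hypothesis $n\ge 4$ is sharp.

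The only points that remain at the level of a sketch are the two numerical certifications: the inequality $(25/12)^{15}<(11/6)^{10}(137/60)^{6}$ for the case $n=4$, and the inequality $g(4)>0$. Both are true (numerically $g(4)\approx 0.062$, and the $n=4$ comparison is between two explicit rationals), and your indicated method---bounding the tails $\sum_{k\ge 5}k^{-2}$ and $\sum_{k\ge 5}k^{-3}$ by integrals---does yield intervals tight enough to decide the sign of $g(4)$. For a complete write-up you should actually carry these estimates out, since $g(4)$ is small and the margin in the $n=4$ case is likewise thin; as written, the proof is a correct outline with two routine but non-negligible verifications deferred.
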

\begin{proof}
See (\cite{Alzer2011}, Theorem 3.2).
\end{proof}

\begin{theorem}
For all integers $m\geq2$ and $n\geq2$ we have
\begin{equation*}
2<\frac{H^{1/m}_m+H^{1/n}_n}{H^{1/(m+n)}_{m+n}}<1+\frac{\sqrt{6}}{2}\,.
\end{equation*}
\end{theorem}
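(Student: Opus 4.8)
The plan is to study the single sequence $g(k)=H_k^{1/k}$ and to reduce both inequalities to one structural fact: $\bigl(g(k)\bigr)_{k\ge2}$ is strictly decreasing, so that $g(k)\le g(2)=\sqrt{3/2}=\sqrt6/2$ for every $k\ge2$ and $g(k)>1$ always. First I would prove this monotonicity. For $k\ge2$ the inequality $g(k+1)<g(k)$ is equivalent, after raising to the power $k(k+1)$, to $\bigl(H_{k+1}/H_k\bigr)^k<H_k$; since $H_{k+1}/H_k=1+\frac{1}{(k+1)H_k}$, the elementary estimate $(1+x)^k<e^{kx}$ (for $x>0$) reduces this to $\exp(1/H_k)\le H_k$, i.e.\ to $H_k\log H_k\ge1$. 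Because $t\mapsto t\log t$ increases for $t>1/e$ and $H_3\log H_3=\frac{11}{6}\log\frac{11}{6}>1$, this holds for every $k\ge3$, and the one remaining instance $g(2)>g(3)$ follows from $(11/6)^2<(3/2)^3$. It is convenient to record the logarithmic form as well: $u_k:=\log g(k)=\tfrac1k\log H_k$ is strictly decreasing for $k\ge2$.

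The lower bound is then immediate. Since $m,n\ge2$ forces $m+n>m$ and $m+n>n$ with all three indices at least $2$, the monotonicity gives $g(m)>g(m+n)$ and $g(n)>g(m+n)$, hence $g(m)+g(n)>2\,g(m+n)$, which is the left inequality.

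For the upper bound I would assume $m\le n$ without loss of generality. Then $g(m)\le g(2)=\sqrt6/2$, while $m+n\le 2n$ gives $g(m+n)\ge g(2n)$, so that
\[
\frac{g(m)+g(n)}{g(m+n)}\;\le\;\frac{g(2)+g(n)}{g(2n)} .
\]
Hence it suffices to show $g(2)+g(n)<\bigl(1+\tfrac{\sqrt6}{2}\bigr)g(2n)$ for all $n\ge2$, and this rearranges to $\dfrac{g(n)-1}{g(2n)-1}<1+\dfrac{\sqrt6}{2}$. Writing $g(k)-1=u_k\,\phi(u_k)$ with $\phi(u)=(e^u-1)/u$, which is increasing on $(0,\infty)$ and exceeds $1$ there, the left side equals $\dfrac{u_n}{u_{2n}}\cdot\dfrac{\phi(u_n)}{\phi(u_{2n})}$. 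Here $\dfrac{u_n}{u_{2n}}=\dfrac{2\log H_n}{\log H_{2n}}<2$ because $H_{2n}>H_n>1$, and for $n\ge4$ the monotonicity of $(u_k)$ gives $\phi(u_n)\le\phi(u_4)$ while $\phi(u_{2n})>1$; thus the ratio is $<2\phi(u_4)=2\,\dfrac{g(4)-1}{\log g(4)}=2.195\ldots<2.2247\ldots=1+\tfrac{\sqrt6}{2}$. The two small cases $n=2$ and $n=3$ are then disposed of by a direct check with $(3/2)^{1/2},(25/12)^{1/4},(11/6)^{1/3},(49/20)^{1/6}$.

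The hard part is precisely this upper bound, because the constant $1+\tfrac{\sqrt6}{2}$ is sharp: along $m=2,\ n\to\infty$ one has $g(2)+g(n)\to\sqrt6/2+1$ and $g(2n)\to1$ (since $g(k)=\exp(\tfrac1k\log H_k)\to1$), so the ratio tends to $1+\sqrt6/2$ from below. Consequently the crude bound $g(m)+g(n)\le 2g(2)=\sqrt6$ by itself does not suffice — it would require $g(m+n)>\sqrt6/(1+\sqrt6/2)\approx1.101$, which fails once $m+n$ is moderately large — and one genuinely has to play $g(m)\le g(2)$ against the gain of $g(m+n)$ over $g(2n)$ and bound $g(n)-1$ by $g(2n)-1$ with the correct factor $2$ rather than $\sqrt6$. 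The margin $2.195<2.225$ in the decisive estimate is narrow, so the elementary inequalities above must be applied with some care; everything else is routine.
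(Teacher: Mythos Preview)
The paper is a survey and does not actually prove this theorem; its ``proof'' is simply a citation to Alzer (2011), Theorem~3.4. So there is no approach in the paper to compare against.

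Your argument is correct. The monotonicity of $g(k)=H_k^{1/k}$ on $k\ge2$ is the key structural fact, and your proof of it is clean: the reduction to $H_k\log H_k\ge1$ via $(1+x)^k<e^{kx}$ handles $k\ge3$, and $(11/6)^2<(3/2)^3$ (i.e.\ $968<972$) settles $k=2$. The lower bound $2$ is then immediate. For the upper bound, the reduction to $\dfrac{g(n)-1}{g(2n)-1}<1+\tfrac{\sqrt6}{2}$ is valid (the possible equalities in the reduction step, occurring only at $m=n=2$, are harmless because the final inequality you prove is strict), and the factorisation through $\phi(u)=(e^u-1)/u$ together with $u_n/u_{2n}<2$ and $\phi(u_n)\le\phi(u_4)$ gives the numerical bound $2\phi(u_4)\approx2.195<2.2247$ for $n\ge4$; the checks at $n=2,3$ are routine. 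Your remark that the constant $1+\sqrt6/2$ is sharp (attained in the limit $m=2$, $n\to\infty$) is also correct and explains why the naive bound $g(m)+g(n)\le\sqrt6$ cannot close the argument by itself.

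This is essentially the same circle of ideas Alzer uses in the original paper --- the strict decrease of $H_k^{1/k}$ is precisely his Theorem~3.2 restated --- so your proof is in the right spirit and self-contained.
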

\begin{proof}
See (\cite{Alzer2011}, Theorem 3.4).
\end{proof}

\begin{theorem}
For all integers $m\geq2$ and $n\geq2$ we have
\begin{equation*}
\frac{1}{H^{1/(m+n)}_{m+n}}<\frac{4}{H^{1/m}_m+H^{1/n}_n}<\frac{1}{H^{1/m}_m}+\frac{1}{H^{1/n}_n}\,.
\end{equation*}
\end{theorem}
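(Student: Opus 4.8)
The plan is to split the asserted chain into its two halves. Put $a_k := H_k^{1/k}$; note $a_k > 1$ for $k \ge 2$ since $H_k \ge H_2 = 3/2$. As every quantity in sight is positive, clearing denominators turns the two inequalities into
\[
a_m + a_n < 4\,a_{m+n} \qquad \mbox{ and } \qquad 4\,a_m a_n < (a_m + a_n)^2 ,
\]
and it is these that I would establish.

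The left inequality is immediate from the preceding theorem, which already supplies the far stronger bound $\dfrac{a_m + a_n}{a_{m+n}} < 1 + \dfrac{\sqrt 6}{2} < 4$. (Once the monotonicity below is available it also drops out of the one-line estimate $a_m + a_n \le 2a_2 = \sqrt 6$ combined with $a_{m+n} > 1$.)

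For the right inequality, $4\,a_m a_n < (a_m + a_n)^2$ is equivalent to $(a_m - a_n)^2 > 0$, i.e.\ to $a_m \ne a_n$; so the whole content is the injectivity of $k \mapsto H_k^{1/k}$ on $\{2, 3, \dots\}$, which I would get by showing that $(a_k)_{k \ge 2}$ is strictly decreasing. Taking logarithms, $a_{k+1} < a_k$ amounts to $k \log(H_{k+1}/H_k) < \log H_k$. Since $H_{k+1}/H_k = 1 + \tfrac{1}{(k+1)H_k}$ and $\log(1+x) < x$, the left side is $< \tfrac{k}{(k+1)H_k} < \tfrac{1}{H_k}$, so it is enough to know $\tfrac{1}{H_k} \le \log H_k$. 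For $k \ge 3$ we have $H_k \ge H_3 = \tfrac{11}{6}$, and because $x \mapsto \log x - \tfrac1x$ is increasing while $\log\tfrac{11}{6} - \tfrac{6}{11} > 0$, the bound holds for all $k \ge 3$; the value $k = 2$ is handled by the direct comparison $a_2 > a_3 \Leftrightarrow (3/2)^3 > (11/6)^2 \Leftrightarrow 243 > 242$. Hence $(a_k)_{k\ge 2}$ is strictly decreasing, so $a_m \ne a_n$ whenever $m \ne n$, and the right inequality follows.

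The step I expect to demand the most care — modest though it is — is exactly this monotonicity: the crude estimate $\tfrac1{H_k} \le \log H_k$ just barely fails at $k = 2$, and the fallback comparison of $a_2$ with $a_3$ is razor-thin ($243$ against $242$), so the constants must be treated honestly there. Everything else is routine. I would also flag that when $m = n$ the right inequality collapses to the equality $\tfrac{4}{2a_m} = \tfrac{2}{a_m}$, so strictness truly needs $m \ne n$; the argument above shows this is its only source.
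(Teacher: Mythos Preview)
The paper gives no proof of its own here --- it merely cites Alzer (2011), Remark 3.5 --- so there is no in-paper argument to compare against. Your approach is the natural one and is correct: after clearing denominators the left inequality is an immediate consequence of the upper bound $1+\sqrt{6}/2<4$ in the preceding theorem, and the right inequality is exactly the arithmetic--harmonic mean inequality $\tfrac{4}{a+b}\le \tfrac{1}{a}+\tfrac{1}{b}$, strict precisely when $a\ne b$. Your monotonicity proof for $k\mapsto H_k^{1/k}$ on $k\ge 2$ is clean, and the delicate numerical check $243>242$ at $k=2$ is handled correctly.

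You are also right to flag the case $m=n$: there the right-hand inequality collapses to an equality, so the strict ``$<$'' as printed in this survey cannot hold for all $m,n\ge 2$ without either the additional hypothesis $m\ne n$ or a weakening to ``$\le$''. This is a genuine slip in the statement as quoted, not a gap in your argument.
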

\begin{proof}
See (\cite{Alzer2011}, Remark 3.5).
\end{proof}

\begin{theorem}
Let $k, m, n \in \mathbb{N}\setminus\{1, 2\}$. If $m<n$, then
\begin{equation*}
H^{1/(m+k)}_{m+k}-H^{1/m}_m<H^{1/(n+k)}_{n+k}-H^{1/n}_n\,.
\end{equation*}
\end{theorem}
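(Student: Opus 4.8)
The plan is to derive the inequality from the already‑established bound $H^{1/n}_n<\sqrt{H^{1/(n-1)}_{n-1}H^{1/(n+1)}_{n+1}}$ (valid for $n\ge4$) by a short telescoping argument. Write $f(j)=H^{1/j}_j$ for $j\ge1$ and $\delta(j)=f(j+1)-f(j)$. Since
\[
f(j+k)-f(j)=\sum_{i=0}^{k-1}\bigl(f(j+i+1)-f(j+i)\bigr)=\sum_{i=0}^{k-1}\delta(j+i),
\]
the claim reduces to showing that the sequence $\bigl(\delta(j)\bigr)_{j\ge3}$ is strictly increasing.

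First I would rephrase the cited inequality: for every integer $n\ge4$ it says $2\log f(n)<\log f(n-1)+\log f(n+1)$. Exponentiating and using the AM--GM inequality (all the $f(j)$ are positive) gives
\[
f(n)<\sqrt{f(n-1)f(n+1)}\le\frac{f(n-1)+f(n+1)}{2},
\]
hence $f(n+1)-f(n)>f(n)-f(n-1)$, that is, $\delta(n)>\delta(n-1)$ for all $n\ge4$. Equivalently, $\delta(3)<\delta(4)<\delta(5)<\cdots$, i.e. the sequence $\bigl(f(j)\bigr)_{j\ge3}$ is (strictly) convex.

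Now I would conclude. Since $3\le m<n$, for each $i\in\{0,1,\dots,k-1\}$ we have $3\le m+i<n+i$, so the monotonicity just established yields $\delta(m+i)<\delta(n+i)$. Summing these $k$ strict inequalities and telescoping,
\[
f(m+k)-f(m)=\sum_{i=0}^{k-1}\delta(m+i)<\sum_{i=0}^{k-1}\delta(n+i)=f(n+k)-f(n),
\]
which is exactly the assertion. (The argument works for every $k\ge1$, so the hypothesis $k\notin\{1,2\}$ is not actually needed.)

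There is no serious obstacle once the earlier theorem is invoked; the only points to watch are that that theorem is available precisely from $n=4$ onwards — which is exactly what makes $\delta$ increasing starting at index $3$, matching the hypotheses $m,n\ge3$ — and the elementary passage from logarithmic convexity to ordinary convexity of $\bigl(f(j)\bigr)$ via AM--GM. If one instead wanted a proof independent of the earlier result, the natural plan would be to show directly that $\bigl(H^{1/j}_j\bigr)_{j\ge3}$ is convex, e.g. by proving that the second derivative of the continuous extension $x\mapsto(\psi(x+1)+\gamma)^{1/x}$ is nonnegative for $x$ past an explicit threshold and checking the finitely many remaining small cases by hand; the genuine work would then lie in the quantitative estimates for the trigamma and tetragamma functions needed to locate that threshold, since this second derivative is not everywhere positive near $x=3$.
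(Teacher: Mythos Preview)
Your argument is correct. The passage from the log-convexity inequality $f(n)^2<f(n-1)f(n+1)$ (valid for $n\ge4$) to ordinary strict convexity of $(f(j))_{j\ge3}$ via AM--GM is sound, and the telescoping step is clean; your remark that the hypothesis $k\ge3$ is superfluous is also right.

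As for comparison with the paper: this is a survey article, and the ``proof'' here consists solely of the citation ``See ([Alzer2011], Theorem 3.6)''. There is therefore no argument in the paper to compare against. What you have done is supply an actual derivation, obtained by reducing this theorem to the immediately preceding one in the same survey (the log-convexity statement, which is Alzer's Theorem~3.2). Whether Alzer's original proof of Theorem~3.6 proceeds the same way or establishes convexity of $j\mapsto H_j^{1/j}$ directly (e.g.\ through the digamma-based analysis you sketch at the end) is not visible from this paper; either way, your route is valid and economical given the tools already on the table.
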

\begin{proof}
See (\cite{Alzer2011}, Theorem 3.6).
\end{proof}

\begin{theorem}
For all integers $n\geq1$ we have
\begin{equation*}
\alpha\leq \exp(H_{n+1})-\exp(H_n)<\beta
\end{equation*}
with the best possible bounds
\begin{equation*}
\alpha=e(\sqrt{e}-1) \quad \mbox{ and } \quad \beta=e^\gamma\,.
\end{equation*}
\end{theorem}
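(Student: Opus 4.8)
\emph{Proof plan.} The first move is to turn the difference into a product: since $H_{n+1}=H_n+\tfrac1{n+1}$,
\[
u_n:=\exp(H_{n+1})-\exp(H_n)=\exp(H_n)\bigl(e^{1/(n+1)}-1\bigr),
\]
and I claim the whole theorem reduces to the single fact that $(u_n)_{n\ge1}$ is strictly increasing. Granting this, the lower bound is immediate: $u_n\ge u_1=e^{3/2}-e=e(\sqrt e-1)=\alpha$ for all $n$, with equality only at $n=1$, so $\alpha$ is attained and is therefore best possible. For the upper bound, $\exp(H_n)/n=\exp(H_n-\ln n)\to e^{\gamma}$ (this is exactly the defining property of $\gamma$ recalled in Section~2; the explicit bounds of Theorem~\ref{Young} make it quantitative) while $n\bigl(e^{1/(n+1)}-1\bigr)\to1$, so $u_n=\bigl(\exp(H_n)/n\bigr)\cdot n\bigl(e^{1/(n+1)}-1\bigr)\to e^{\gamma}$; a strictly increasing sequence lies below its limit, whence $u_n<e^{\gamma}=\beta$ for every $n$, and since $u_n\to\beta$ no smaller constant works.

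It remains to prove monotonicity. Writing $a:=\tfrac1{n+1}$ and $b:=\tfrac1{n+2}$, division gives $u_{n+1}/u_n=e^{a}(e^{b}-1)/(e^{a}-1)$, so $u_{n+1}>u_n$ is equivalent to $e^{a+b}+1>2e^{a}$; dividing by $e^{(a+b)/2}$ recasts this as the symmetric inequality
\[
\cosh\!\Bigl(\tfrac s2\Bigr)>e^{c/2},\qquad s:=a+b=\frac{2n+3}{(n+1)(n+2)},\quad c:=a-b=\frac1{(n+1)(n+2)},
\]
where $s=(2n+3)c$ and $0<c\le\tfrac16$ for $n\ge1$. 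The plan is to expand both sides as power series and use the algebraic identity $(2n+3)^2=4(n+1)(n+2)+1$, equivalently $(2n+3)^2c=4+c$. This makes the linear-order contribution collapse to $\tfrac{s^2}{8}-\tfrac c2=\tfrac{c^2}{8}$, which exactly cancels the negative quadratic term $-\tfrac{c^2}{8}$ coming from $-e^{c/2}$, leaving
\[
\cosh\!\Bigl(\tfrac s2\Bigr)-e^{c/2}=\frac{s^4}{384}+\sum_{k\ge3}\Bigl(\frac{s^{2k}}{4^{k}(2k)!}-\frac{c^{k}}{2^{k}k!}\Bigr)\ \ge\ \frac{s^4}{384}-\sum_{k\ge3}\frac{(c/2)^k}{k!}.
\]
Estimating the tail by a geometric series, $\sum_{k\ge3}(c/2)^k/k!\le\tfrac{(c/2)^3}{6}\cdot\tfrac1{1-(c/2)/4}\le\tfrac{c^3}{47}$ when $c\le\tfrac16$, the right-hand side is at least $c^3\bigl(\tfrac{(2n+3)^4c}{384}-\tfrac1{47}\bigr)$; and $(2n+3)^4c=(2n+3)^2(4+c)>100$ for $n\ge1$, so this is positive. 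Hence $u_{n+1}>u_n$ for all $n\ge1$, which closes the argument.

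The one genuinely delicate step is this last estimate. The obvious convexity bound $\cosh\tfrac s2>e^{(a-b)/2}$ fails outright because $a>b$, and keeping just one extra term in each expansion still falls short; what saves the day is the exact cancellation forced by $(2n+3)^2=4(n+1)(n+2)+1$, which annihilates the entire dangerous part and exposes a clean positive leading term $\tfrac{s^4}{384}$ dominating an easily controlled tail. Everything else --- the reduction to monotonicity, the limit computation, and the verification that $\alpha,\beta$ are best possible --- is routine.
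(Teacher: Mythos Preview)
The paper gives no argument of its own here; its entire ``proof'' is the pointer ``See (\cite{Alzer2011}, Theorem 3.7).'' So there is nothing in the present paper to compare against, and the question reduces to whether your argument stands on its own. It does.

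The reduction to strict monotonicity of $u_n=\exp(H_n)\bigl(e^{1/(n+1)}-1\bigr)$ is the right strategy, and both endpoint identifications are sound: $u_1=e^{3/2}-e=e(\sqrt e-1)=\alpha$ is attained, and the factorisation $u_n=\bigl(e^{H_n}/n\bigr)\cdot n\bigl(e^{1/(n+1)}-1\bigr)\to e^{\gamma}\cdot 1$ shows a strictly increasing sequence must sit below its limit $\beta=e^{\gamma}$ while making that constant sharp. Your monotonicity computation also checks out: recasting $u_{n+1}>u_n$ as $\cosh(s/2)>e^{c/2}$ with $s=(2n+3)c$, and then using $(2n+3)^2c=4+c$ to force the cancellation $\bigl(\tfrac{s^2}{8}-\tfrac{c}{2}\bigr)-\tfrac{c^2}{8}=0$, is exactly what makes the series comparison work; the residual tail bound $\sum_{k\ge3}(c/2)^k/k!\le (c/2)^3/(6(1-c/8))\le c^3/47$ for $c\le\tfrac16$ and the estimate $(2n+3)^4c=(2n+3)^2(4+c)>25\cdot 4=100$ are both correct and together close the inequality.

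For comparison with the cited source: Alzer's proof in \cite{Alzer2011} extends to real arguments via the digamma function, writing $e^{H_x}=e^{\psi(x+1)+\gamma}$ and studying $g(x)=e^{\psi(x+1)+\gamma}\psi'(x+1)$; monotonicity of $u_n$ then follows from monotonicity of $g$, established through polygamma inequalities, with $\lim_{x\to\infty}g(x)=e^{\gamma}$ giving the upper constant. Your route is more elementary---no special functions, only explicit power-series bookkeeping---at the cost of the delicate two-stage cancellation you correctly identified as the heart of the matter.
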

\begin{proof}
See (\cite{Alzer2011}, Theorem 3.7).
\end{proof}
In the same year Batir \cite{Batir2011} proved the following theorem.
\begin{theorem}
For any integer $n\geq1$ we have
\begin{equation*}
\gamma+\frac{1}{2}\log\Big(n^2+n+e^{2(1-\gamma)}-2\Big)<H_n\leq \gamma+\frac{1}{2}\log\left(n^2+n+\frac{1}{3}\right)\,.
\end{equation*}
\end{theorem}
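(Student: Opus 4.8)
The plan is to reduce both inequalities to the monotonicity in $n$ of two auxiliary sequences, together with the elementary fact that both of them tend to $0$. Set
\[
b_n=\gamma+\tfrac12\log\!\Big(n^2+n+\tfrac13\Big)-H_n,\qquad
a_n=H_n-\gamma-\tfrac12\log\!\big(n^2+n+c\big),\qquad c=e^{2(1-\gamma)}-2 .
\]
From the standard expansion $H_n=\log n+\gamma+\tfrac1{2n}-\tfrac1{12n^2}+O(n^{-4})$ one gets $e^{2(H_n-\gamma)}=n^2+n+\tfrac13+O(n^{-1})$; in particular $b_n\to0$ and $a_n\to0$, and this also explains the two constants: $\tfrac13$ is the limit of $e^{2(H_n-\gamma)}-n^2-n$, while $c=e^{2(H_1-\gamma)}-1^2-1$ is its value at $n=1$. (Thus the left-hand inequality is actually an equality at $n=1$: the displayed ``$<$'' should read ``$\le$'', or $n$ should be restricted to $n\ge2$.) Hence it suffices to prove that $b_n$ is strictly decreasing for $n\ge1$ and that $a_n$ is strictly decreasing for $n\ge2$, for then $b_n>\lim_k b_k=0$ and $a_n>\lim_k a_k=0$ on the respective ranges.

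For the upper bound, the substitution $m=n+1$ (so that $n^2+n+\tfrac13=m^2-m+\tfrac13$ and $(n+1)^2+(n+1)+\tfrac13=m^2+m+\tfrac13$) turns $b_n>b_{n+1}$ into the claim
\[
e^{2/m}\Big(m^2-m+\tfrac13\Big)>m^2+m+\tfrac13\qquad(m\ge2).
\]
Bounding $e^{u}$ below by its degree-$5$ Taylor polynomial at $u=\tfrac2m\ge0$ and expanding the product, the coefficients of $m^{-1}$ and $m^{-2}$ cancel --- this cancellation is exactly what pins down the constant $\tfrac13$ --- leaving
\[
e^{2/m}\Big(m^2-m+\tfrac13\Big)\ \ge\ \Big(m^2+m+\tfrac13\Big)+\frac{2(m-1)}{45\,m^{4}}+\frac{4}{45\,m^{5}},
\]
which is strictly larger than $m^2+m+\tfrac13$ for every $m\ge1$. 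This gives $H_n<\gamma+\tfrac12\log(n^2+n+\tfrac13)$ for all $n\ge1$.

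For the lower bound, the same substitution turns $a_n>a_{n+1}$, for $n\ge2$ (that is, $m\ge3$), into
\[
e^{2/m}\ <\ \frac{m^2+m+c}{\,m^2-m+c\,}\ =\ 1+\frac{2m}{m^2-m+c}\, .
\]
Writing $B=m^2+c$ and using $\log\tfrac{B+m}{B-m}=\tfrac{2m}{B}+\tfrac{2m^{3}}{3B^{3}}+\tfrac{2m^{5}}{5B^{5}}+\cdots$, the inequality $\tfrac2m<\log\tfrac{B+m}{B-m}$ is equivalent to $\tfrac{m^{4}}{3B^{2}}+\tfrac{m^{6}}{5B^{4}}+\cdots>c$, which is transparent for large $m$ because the first summand alone increases to $\tfrac13>c$; for the finitely many small $m$ one keeps two or three summands of the series.

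The hard part is exactly this last step. Because $c$ is the \emph{best possible} constant in the lower bound --- it is the value attained at $n=1$ --- the inequality $a_n>a_{n+1}$ holds near $n=2$ by a razor-thin margin: for $m=3$ the sum of the first two summands $\tfrac{m^{4}}{3B^{2}}+\tfrac{m^{6}}{5B^{4}}$ exceeds $c$ only by about $1.6\times10^{-4}$, so no crude estimate of $e^{u}$ or of the logarithm will do there. The cleanest remedy I see is to dispatch the single case $m=3$ (the step $a_2>a_3$) by a direct numerical computation, and to run the power-series argument for $m\ge4$, where there is comfortable room. Everything else --- the monotonicity of $b_n$ for all $n$, and of $a_n$ for $n$ away from $2$ --- is then a routine matter of fixing the right polynomial or power-series comparison and reading off a strictly positive remainder.
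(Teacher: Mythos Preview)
The paper gives no proof of its own; it merely cites Batir (2011), Corollary~2.2. So there is nothing in the paper to compare against, and your attempt must be judged on its own.

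Your strategy is sound and the upper-bound argument is complete and correct: the degree-$5$ Taylor lower bound for $e^{2/m}$ really does produce the cancellation of the $m^{-1}$ and $m^{-2}$ terms (this is what fixes $\tfrac13$), and the displayed remainder $\tfrac{2(m-1)}{45m^{4}}+\tfrac{4}{45m^{5}}$ is positive for all $m\ge 1$. You also correctly observe that the left-hand inequality is an equality at $n=1$, so the statement as written needs ``$\le$'' there (or the restriction $n\ge 2$); this is a genuine slip in the survey's transcription of Batir's corollary.

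The lower-bound argument is where you are honest but not quite finished. Your reduction to
\[
\frac{m^{4}}{3B^{2}}+\frac{m^{6}}{5B^{4}}+\cdots>c,\qquad B=m^{2}+c,
\]
is correct, and your plan --- numerical check at $m=3$, series argument for $m\ge 4$ --- will work, but two points deserve care. First, the individual summands beyond the first are \emph{not} monotone in $m$ (for $k\ge 2$ the term $m^{2k+2}/B^{2k}$ eventually decreases), so ``the first summand alone increases to $\tfrac13>c$'' only settles the matter from the point where $\tfrac{m^{4}}{3B^{2}}>c$, which is roughly $m\ge 8$; the intermediate cases $m=4,5,6,7$ still need either a two-term estimate or a direct check. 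Second, the $m=3$ step really is razor-thin (your $1.6\times10^{-4}$ is the right order), so the ``numerical computation'' must be done with enough precision in $\gamma$ to be convincing --- say four or five correct decimals of $c$.

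For context, Batir's own route is different: he works with the digamma function $\psi(x)$, proves continuous monotonicity results for $e^{2\psi(x)}-x^{2}+x$ via the trigamma function, and then specialises to $x=n+1$ using $H_n=\psi(n+1)+\gamma$. That machinery absorbs the delicate small-$n$ behaviour into an analytic inequality for $\psi'$, whereas your purely discrete approach trades that for a handful of explicit numerical checks. Both are legitimate; yours is more elementary but pays for it with those checks.
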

\begin{proof}
See (\cite{Batir2011}, Corollary 2.2).
\end{proof}
Again in 2011 Guo and Qi \cite{Guo} obtained the following result.
\begin{theorem}
For all natural numbers $n$ we have
\begin{equation*}
\gamma+\log n+\frac{1}{2n}-\frac{1}{12n^2+\frac{2(7-12\gamma)}{2\gamma-1}}\leq H_n
<\gamma+\log n+\frac{1}{2n}-\frac{1}{12n^2+\frac{6}{5}}\,.
\end{equation*}
\end{theorem}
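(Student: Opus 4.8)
The plan is to reduce both inequalities to a single monotonicity statement. Put $D_n:=\gamma+\log n+\frac{1}{2n}-H_n$; by Young's theorem (Theorem~\ref{Young}) we have $0<D_n\le D_1=\gamma-\frac12$, so
\begin{equation*}
\theta_n\;:=\;\frac{1}{D_n}-12n^2
\end{equation*}
is well defined. For a fixed $n$ and a constant $c$, the inequality $\gamma+\log n+\frac{1}{2n}-\frac{1}{12n^2+c}\le H_n$ is equivalent to $D_n\le\frac{1}{12n^2+c}$, i.e.\ to $\theta_n\ge c$; likewise $H_n<\gamma+\log n+\frac{1}{2n}-\frac{1}{12n^2+c}$ is equivalent to $\theta_n<c$. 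I will show that $\theta_n$ is strictly increasing, that $\theta_1=\frac{2(7-12\gamma)}{2\gamma-1}$, and that $\lim_{m\to\infty}\theta_m=\frac65$. Granting this, the left inequality of the theorem is exactly $\theta_n\ge\theta_1$ and the right inequality is exactly $\theta_n<\lim_{m\to\infty}\theta_m=\frac65$, so both follow at once.

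The two endpoint computations are elementary. Since $H_1=1$ and $\log 1=0$, we get $D_1=\gamma-\frac12$ and $\theta_1=\frac{1}{\gamma-1/2}-12=\frac{7-12\gamma}{\gamma-1/2}=\frac{2(7-12\gamma)}{2\gamma-1}$. For the limit, insert the Euler--Maclaurin expansion $H_n=\gamma+\log n+\frac{1}{2n}-\frac{1}{12n^2}+\frac{1}{120n^4}+O(n^{-6})$, which gives $D_n=\frac{1}{12n^2}\bigl(1-\frac{1}{10n^2}+O(n^{-4})\bigr)$, hence $\frac{1}{D_n}=12n^2+\frac65+O(n^{-2})$ and $\theta_n\to\frac65$.

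The main obstacle is the monotonicity $\theta_{n+1}>\theta_n$. Rewriting, this is equivalent to $D_n-D_{n+1}>12(2n+1)\,D_nD_{n+1}$, and the difference on the left has the clean closed form $D_n-D_{n+1}=\frac{1}{2n}+\frac{1}{2(n+1)}-\log\!\bigl(1+\frac1n\bigr)$ (using $H_{n+1}-H_n=\frac{1}{n+1}$); it is positive, being the trapezoidal-rule surplus for $\int_n^{n+1}\frac{dt}{t}$ that comes from convexity of $1/t$. One then needs a sufficiently sharp upper bound for $D_nD_{n+1}$: the crude estimate $D_n<\frac{1}{12n^2}$ alone is not enough — the two sides agree to several orders, which is exactly why the constants $\frac{2(7-12\gamma)}{2\gamma-1}$ and $\frac65$ are best possible — so one must carry the expansion of $D_n$ a term or two further, e.g.\ $\frac{1}{12n^2}-\frac{1}{120n^4}<D_n<\frac{1}{12n^2}$, after which the required monotonicity reduces to an explicit inequality between $\log(1+\frac1n)$ and a rational function of $n$, verified by calculus for $n$ beyond a threshold and by direct computation for the remaining small values. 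Equivalently one may work with the digamma function: write $H_x=\gamma+\Psi(x+1)$, $g(x)=\log x+\frac{1}{2x}-\Psi(x+1)$, $\Theta(x)=\frac{1}{g(x)}-12x^2$, and prove $\Theta'(x)>0$ for $x\ge1$, which since $\Theta'(x)=-g'(x)/g(x)^2-24x$ amounts to $\Psi'(x+1)+\frac{1}{2x^2}-\frac1x>24x\,g(x)^2$, provable from the standard integral representations of $\Psi$ and $\Psi'$. Either way, squeezing out this monotonicity with the necessary precision is where essentially all the work lies; the rest is the bookkeeping above.
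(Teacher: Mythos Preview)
The paper itself gives no proof here; it simply cites Guo and Qi \cite{Guo}. Your reduction is correct and is the standard one (and essentially the route in the cited source): with $D_n=\gamma+\log n+\frac{1}{2n}-H_n>0$ and $\theta_n=\frac{1}{D_n}-12n^2$, the two inequalities are exactly $\theta_n\ge\theta_1$ and $\theta_n<\lim_{m\to\infty}\theta_m$, so strict monotonicity of $\theta_n$ together with your (correct) computations $\theta_1=\frac{2(7-12\gamma)}{2\gamma-1}$ and $\theta_n\to\frac65$ finishes the job. One small quibble: Young's theorem gives $D_n>0$, but $D_n\le D_1$ is not part of Young's statement; it follows from the monotonicity of $D_n$ that you establish a paragraph later.

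That said, what you have written is an outline, not a proof. You correctly isolate the crux --- the inequality $\theta_{n+1}>\theta_n$, equivalently $D_n-D_{n+1}>12(2n+1)D_nD_{n+1}$ --- and you correctly observe that the crude bound $D_n<\frac{1}{12n^2}$ is insufficient because the two sides match to several orders. But you then only gesture at the resolution: ``reduces to an explicit inequality \ldots\ verified by calculus for $n$ beyond a threshold and by direct computation for the remaining small values,'' or alternatively ``provable from the standard integral representations of $\Psi$ and $\Psi'$.'' Neither route is carried out, and this is precisely the delicate step where all the analytic content lives. To turn the sketch into a proof you must actually execute one of these: for instance, justify the two-sided estimate $\frac{1}{12n^2}-\frac{1}{120n^4}<D_n<\frac{1}{12n^2}-\frac{1}{120n^4}+\frac{1}{252n^6}$ (e.g.\ via the alternating Euler--Maclaurin remainder), substitute, and reduce to a polynomial inequality in $n$ that can be checked; or else prove $\Theta'(x)>0$ from the integral formula for $\Psi'$. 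As it stands, you have mapped the terrain accurately but not crossed it.
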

\begin{proof}
See (\cite{Guo}, Theorem 1).
\end{proof}
Other results concerning inequalities with $H_n$ can be found in
\cite{Chen2012},  \cite{Elezovic}, \cite{Niu}, \cite{Sintamarian},

\section{The Mertens function {\boldmath$M(n)$}}
\indent

In 1995 Marraki \cite{Marraki} obtained upper bounds for $M(n)$ and found that
\begin{equation*}
{\begin{aligned}|M(x)|&\leq{\frac {12590292\cdot x}{(\log x)^{236/75}}}\quad {\text{ for }} \quad x>\exp(12282.3)
\\|M(x)|&\leq{\frac {0.6437752\cdot x}{\log x}}\quad {\text{ for }} \quad x>1\,.\end{aligned}}
\end{equation*}
Other results concerning inequalities with $M(n)$ can be found in \cite{Cohen} and \cite{Ramare}.

\section{Prime omega functions {\boldmath$\omega(n)$} and {\boldmath$\Omega(n)$}}
\indent

Obviously
\begin{equation*}
\omega(n)\leq\Omega(n)\,.
\end{equation*}
In 1983 Robin \cite{Robin1983a} obtained the following results.
\begin{theorem}
For $n\geq3$ we have
\begin{equation*}
\omega(n)\leq 1.38402\frac{\log n}{\log\log n}\,.
\end{equation*}
\end{theorem}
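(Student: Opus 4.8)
The plan is to reduce the inequality to its extremal instances — the primorials — and then recast it as an explicit comparison between Chebyshev's functions $\pi(x)$ and $\theta(x)=\sum_{p\le x}\log p$. First I would note that if $\omega(n)=k$ then $n$ is divisible by $k$ distinct primes, so $n\ge N_k:=p_1p_2\cdots p_k$, the $k$-th primorial. Since $x\mapsto\log x/\log\log x$ is strictly increasing on $(e^{e},\infty)$ and $N_k\ge N_3=30>e^{e}$ for every $k\ge3$, the inequality, restricted to those $n$ with $\omega(n)=k\ge3$, follows from its special case $n=N_k$, i.e. from
\[
k\;\le\;1.38402\,\frac{\log N_k}{\log\log N_k}\qquad(k\ge3).
\]
The low cases $\omega(n)\in\{1,2\}$, where $N_k\le6<e^{e}$ and the monotonicity argument does not apply directly, are handled by an elementary ad hoc check: if $\omega(n)=2$ then $n\ge6$, and for $6\le n<e^{e}$ one has $0<\log\log n<1$, so the right-hand side exceeds $1.38402\log 6>2$, while for $n\ge e^{e}$ it is at least $1.38402\,e>2$; the case $\omega(n)=1$ is analogous with $n\ge3$.

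Next, using $\log N_k=\sum_{i=1}^{k}\log p_i=\theta(p_k)$ and $k=\pi(p_k)$, the displayed inequality is exactly
\[
\pi(p_k)\,\log\theta(p_k)\;\le\;1.38402\,\theta(p_k),
\]
so it suffices to prove $\pi(x)\log\theta(x)\le1.38402\,\theta(x)$ for every prime $x$ — or, more conveniently, for every real $x\ge2$. By the prime number theorem $\pi(x)\sim x/\log x$, $\theta(x)\sim x$ and $\log\theta(x)\sim\log x$, so the left-hand side is asymptotic to $x\sim\theta(x)$; hence the ratio $\pi(x)\log\theta(x)/\theta(x)$ tends to $1$, comfortably below $1.38402$, and the whole problem is to control it uniformly in $x$.

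To make this effective I would combine an explicit upper bound for $\pi$ with an explicit lower bound for $\theta$ of Rosser–Schoenfeld type: for $x>1$ one has $\pi(x)<\frac{x}{\log x}\bigl(1+\frac{3}{2\log x}\bigr)$ (recorded above), there is a bound $\theta(x)>x\bigl(1-\frac{c}{\log x}\bigr)$ valid for $x\ge x_{1}$, and the classical Chebyshev estimate $\theta(x)<x\log 4<\frac{3}{2}x$ gives $\log\theta(x)\le\log x+\log\frac{3}{2}$. Together these yield, for $x\ge x_{1}$,
\[
\frac{\pi(x)\log\theta(x)}{\theta(x)}\;\le\;\frac{\bigl(1+\frac{3}{2\log x}\bigr)\bigl(\log x+\log\frac{3}{2}\bigr)}{\bigl(1-\frac{c}{\log x}\bigr)\log x},
\]
whose right-hand side decreases in $\log x$ to the limit $1$; hence there is an explicit $x_{0}\ge x_{1}$ beyond which it is already $\le1.38402$, and everything reduces to a finite computation (since $\pi$ and $\theta$ are constant between consecutive primes, it is enough to check the primes $p\le x_{0}$).

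That finite verification — confirming $\pi(p)\log\theta(p)\le1.38402\,\theta(p)$ for every prime $p\le x_{0}$, equivalently $k\le1.38402\log N_k/\log\log N_k$ for the primorials with $p_k\le x_{0}$ — is where the real work lies, and it is delicate: the constant $1.38402$ is essentially forced by an extremal primorial (one computes that the ratio is largest near $N_9=223092870$), so $x_{0}$ must be kept small enough for the check to be tractable, yet the explicit $\pi$- and $\theta$-estimates used must be sharp enough (Rosser–Schoenfeld quality rather than merely Chebyshev quality) that the asymptotic bound has already dropped below $1.38402$ by $x_{0}$. This is precisely the mechanism behind the companion Nicolas–Robin bound on $d(n)$ recorded in Theorem 1.
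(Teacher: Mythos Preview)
The paper gives no proof of its own; it merely cites Robin (1983), Theorem~11. Your proposal is correct and is the standard route to such bounds (and is in substance Robin's argument): reduce to primorials via the monotonicity of $\log x/\log\log x$ on $(e^{e},\infty)$, rewrite the primorial inequality as $\pi(p_k)\log\theta(p_k)\le 1.38402\,\theta(p_k)$, dispose of large $x$ by explicit Rosser--Schoenfeld estimates for $\pi$ and $\theta$, and verify the remaining finitely many primorials directly --- your identification of $N_{9}=223092870$ as the extremal primorial that forces the constant $1.38402$ is also correct.
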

\begin{proof}
See (\cite{Robin1983a}, Theorem 11).
\end{proof}

\begin{theorem}
For $n\geq3$ we have
\begin{equation*}
\omega(n)\leq\frac{\log n}{\log\log n}+1.45743\frac{\log n}{(\log\log n)^2}\,.
\end{equation*}
\end{theorem}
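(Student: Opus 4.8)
The plan is to bound $\omega(n)$ from above by comparing $n$ with the product of the first $\omega(n)$ primes, which is the smallest integer with that many distinct prime factors. First I would set $k=\omega(n)$ and observe that $n\geq p_1p_2\cdots p_k=\prod_{i\leq k}p_i$, where $p_i$ denotes the $i$-th prime; taking logarithms gives $\log n\geq\theta(p_k)=\sum_{i\leq k}\log p_i$, the first Chebyshev function evaluated at the $k$-th prime. The strategy is then to invoke effective forms of the prime number theorem — precisely the Rosser–Schoenfeld type estimates for $\theta(x)$ and for the $k$-th prime $p_k$, which are available in the literature cited earlier (Rosser and Schoenfeld \cite{Rosser1962}) — to turn the inequality $\log n\geq\theta(p_k)$ into an explicit upper bound for $k$ in terms of $\log n$ and $\log\log n$.

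The key steps, in order, are: (i) reduce to the extremal case $n=\prod_{i\leq k}p_i$, so that it suffices to prove the inequality for these primorial-type $n$; (ii) use $\log p_k = \log k + \log\log k + O(1)$ together with a lower bound $\theta(p_k)\geq p_k(1-\varepsilon_k)$ to get $\log n \geq \theta(p_k) \gtrsim k\log k$; (iii) invert this asymptotic relation to obtain $k \leq \dfrac{\log n}{\log\log n} + \dfrac{(1+o(1))\log n\,\log\log\log n}{(\log\log n)^2}$, matching the shape of the claimed bound with the second-order term; (iv) finally, pin down the explicit constant $1.45743$ by carefully tracking the error terms in the effective estimates for $\theta$ and $p_k$, and verify the finitely many small cases $n<n_0$ by direct computation. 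In practice Robin carries this out by introducing the function comparing $\omega(n)$ against $\mathrm{li}$-type expressions and optimizing; the elementary route above gives the same leading behaviour.

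The main obstacle will be step (iv): extracting the precise constant $1.45743$ in the secondary term $\dfrac{\log n}{(\log\log n)^2}$ requires genuinely explicit (not merely asymptotic) control of $\theta(p_k)$ and of $p_k$ itself, and then a delicate optimization over the range where the worst case occurs — the constant is essentially dictated by the behaviour of $\log\log\log$ against $\log\log$ at moderately large $n$, so one must check that no small or intermediate $n$ violates the bound. This is exactly the kind of bookkeeping done in \cite{Robin1983a}, and for the write-up I would simply reduce to the extremal primorial case, quote the effective Chebyshev bounds from \cite{Rosser1962}, and then refer to (\cite{Robin1983a}, Theorem 11) for the optimization of the constant and the verification of the small cases.
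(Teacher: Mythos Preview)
The paper's own ``proof'' here is nothing more than a pointer to Robin's original article: it reads in full ``See (\cite{Robin1983a}, Theorem 12).'' So your sketch already goes well beyond what the survey provides, and the strategy you outline --- reduce to the primorial case $n=p_1\cdots p_k$, use $\log n\geq\theta(p_k)$, feed in the explicit Rosser--Schoenfeld estimates for $\theta$ and $p_k$, then optimise the constant and check small cases --- is indeed the standard route and is essentially what Robin does.

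One small correction: at the very end you refer to (\cite{Robin1983a}, Theorem 11), but that is the coarser bound $\omega(n)\leq 1.38402\,\log n/\log\log n$; the inequality with the secondary term $1.45743\,\log n/(\log\log n)^2$ is Theorem 12 in Robin's paper, as the survey correctly cites. Also, your step (iii) overshoots slightly: the second-order term you write involves $\log\log\log n$, whereas the target inequality has a pure constant $1.45743$ in front of $\log n/(\log\log n)^2$ --- the $\log\log\log$ is absorbed into that constant over the relevant range, which is exactly the optimisation you flag in step (iv).
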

\begin{proof}
See (\cite{Robin1983a}, Theorem 12).
\end{proof}

\begin{theorem}
For $n\geq26$ we have
\begin{equation*}
\omega(n)\leq \frac{\log n}{\log\log n-1.1714}\,.
\end{equation*}
\end{theorem}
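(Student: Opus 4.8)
The plan is to recast the inequality in additive form and then reduce it to a statement about primorials. First, for $n\ge 26$ one has $\log\log n>1.1714$ (indeed $\log\log 26=1.181\ldots$ and $t\mapsto\log\log t$ is increasing), so the denominator on the right is positive and the assertion is equivalent to
\begin{equation*}
\log\log n-\frac{\log n}{\omega(n)}\le 1.1714 .
\end{equation*}
For a fixed value $k$ of $\omega(n)$ set $g_k(t)=\log\log t-\frac{\log t}{k}$ on $t>e$. Then $g_k'(t)=\frac1t\left(\frac1{\log t}-\frac1k\right)$, so $g_k$ increases on $(e,e^{k})$ and decreases on $(e^{k},\infty)$; in particular, on any interval $[a,\infty)$ with $a\ge e^{k}$ it attains its maximum at the left endpoint $a$.

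The cases $\omega(n)\in\{1,2\}$ are immediate, since $26>e^{2}$ forces $g_1$ and $g_2$ to be decreasing on $[26,\infty)$, with $g_1(26)=\log\log 26-\log 26<0$ and, similarly, $g_2(26)<0$. Now let $k=\omega(n)\ge 3$. Then $n\ge N_k:=p_1p_2\cdots p_k$, the $k$-th primorial, and $\log N_k=\theta(p_k)$, Chebyshev's function. A short induction — from $\theta(p_3)=\log 30>3$, using $\theta(p_{k+1})=\theta(p_k)+\log p_{k+1}$ and $\log p_{k+1}\ge\log 7>1$ — shows $\theta(p_k)\ge k$, i.e.\ $N_k\ge e^{k}$. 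Hence $g_k$ is decreasing on $[N_k,\infty)\supseteq\{\,n:\omega(n)=k\,\}$, and the whole theorem reduces to proving
\begin{equation*}
g_k(N_k)=\log\theta(p_k)-\frac{\theta(p_k)}{k}\le 1.1714\qquad(k\ge 3).
\end{equation*}

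To settle this I would invoke the explicit estimates of Rosser and Schoenfeld. Writing $x=p_k$, so that $k=\pi(x)$, they provide — with effective constants — $\theta(x)=x+O(x/\log^{2}x)$ and $\pi(x)=\frac{x}{\log x}\bigl(1+O(1/\log x)\bigr)$, whence $\frac{\theta(x)}{\pi(x)}=\log x-1+O(1/\log x)$ and $\log\theta(x)=\log x+O(1/\log x)$, so that $g_k(N_k)=1+O(1/\log x)$. Since the error term is effective, this yields $g_k(N_k)\le 1.1714$ for all $k$ beyond an explicit threshold $K_0$, and for $3\le k<K_0$ one verifies the inequality directly from a table of $\theta(p_k)$ — for instance $g_3(N_3)=\log\log 30-\tfrac13\log 30=0.09\ldots$, comfortably below the bound. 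The subtle point, and the real obstacle, is that $\sup_{k}g_k(N_k)$ is itself close to $1.1714$: the maximum is attained for $k$ of moderate size (in the hundreds to low thousands), where $g_k(N_k)$ is roughly $1+e^{-2}$ plus a genuine positive contribution coming from the gap between $\theta(x)$ and $x$. Consequently the Rosser–Schoenfeld inequalities must be applied sharply in exactly that range, and the finite range $3\le k<K_0$ that one has to check numerically is correspondingly not tiny; this calibration is the technical heart of the proof.
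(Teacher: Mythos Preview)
The paper does not give a proof of this statement at all: it simply cites Robin's Theorem~13 in \cite{Robin1983a}. Your outline is, in fact, precisely Robin's method. The reduction is correct: the rewriting as $\log\log n-\frac{\log n}{\omega(n)}\le 1.1714$, the monotonicity of $g_k$ beyond $e^{k}$, the observation that primorials $N_k$ are extremal for fixed $\omega(n)=k$, and the check $\theta(p_k)\ge k$ (so that $N_k\ge e^{k}$) are all sound. The asymptotic computation $g_k(N_k)=\log\theta(p_k)-\theta(p_k)/\pi(p_k)=1+O(1/\log p_k)$ is also right, and it is exactly why the constant in Robin's bound is slightly larger than~$1$.

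What is missing is the part you yourself flag as ``the technical heart'': you do not actually carry out the effective Rosser--Schoenfeld estimates to produce an explicit $K_0$, nor the finite tabulation of $\theta(p_k)$ for $3\le k<K_0$. This is not a formality, because $1.1714$ is essentially the supremum of $g_k(N_k)$ over $k$ (Robin chose the constant to be sharp in this sense), so the margin is genuinely thin in the intermediate range and the numerical work cannot be skipped. As a proof, then, your write-up is a correct and well-organised blueprint of Robin's argument, but it stops short of the computation that determines the constant; the paper, for its part, does no more than point to Robin.
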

\begin{proof}
See (\cite{Robin1983a}, Theorem 13).
\end{proof}

\begin{theorem}
For $n\geq3$ we have
\begin{equation*}
\omega(n)\leq\frac{\log n}{\log\log n}+\frac{\log n}{(\log\log n)^2}+2.89726\frac{\log n}{(\log\log n)^3}\,.
\end{equation*}
\end{theorem}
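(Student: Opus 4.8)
The plan is to reduce the inequality to its extremal instance and then invoke explicit estimates for the prime-counting function. Since $\omega(n)$ depends only on the set of prime divisors of $n$, once $\omega(n)=k$ is fixed the integer $n$ is at least as large as the $k$-th primorial $N_k:=p_1p_2\cdots p_k$. Writing
\begin{equation*}
g(t)=\frac{\log t}{\log\log t}+\frac{\log t}{(\log\log t)^2}+2.89726\,\frac{\log t}{(\log\log t)^3}\,,
\end{equation*}
one checks that $g$ is increasing for $t\ge 16$ (note $e^e<16$), so that for $k\ge 3$, hence $N_k\ge 30$, it suffices to establish $k\le g(N_k)$; the finitely many $n$ with $3\le n<16$, for which $\omega(n)\le 2$, are handled directly, the bound being very loose there because $\log\log n$ is small. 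Setting $y=p_k$ and using $\log N_k=\theta(y)$, where $\theta$ denotes Chebyshev's function, together with $k=\pi(y)$, the task becomes: prove $\pi(y)\le g(\theta(y))$ for every prime $y\ge 5$.

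The next step is to feed in explicit estimates of Rosser--Schoenfeld type \cite{Rosser1962}, as used also by Robin \cite{Robin1983a}: for $y$ beyond an explicit threshold one has an upper bound of the shape $\pi(y)\le \frac{y}{\log y}\bigl(1+\frac{1}{\log y}+\frac{c_1}{(\log y)^2}+\frac{c_1'}{(\log y)^3}\bigr)$ with $c_1$ slightly above $2$, and a lower bound $\theta(y)\ge y\bigl(1-\frac{c_2}{\log y}\bigr)$. Substituting the latter into $g$ and using the monotonicity of $t\mapsto t/\log t$ and of the correction factors, one obtains
\begin{equation*}
g(\theta(y))\ge \frac{y}{\log y}\left(1+\frac{1}{\log y}+\frac{c_3}{(\log y)^2}+O\!\left(\frac{1}{(\log y)^3}\right)\right)\,,
\end{equation*}
where $c_3$ is $2.89726$ diminished by the explicit loss produced by $\theta(y)<y$ and by the shift $\log\theta(y)<\log y$. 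Comparing the two expansions term by term, the desired inequality $\pi(y)\le g(\theta(y))$ reduces, for all large $y$, to a single numerical inequality between the coefficients of $(\log y)^{-2}$, which holds with margin; the constant $2.89726$ is then chosen precisely so that this comparison is valid from the threshold onward.

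Finally, for $y=p_k$ below that threshold I would verify $k\le g(N_k)$ by direct computation over the finitely many primorials involved; the extremal case is one specific primorial, which also explains why the constant cannot be lowered much further. Combining the asymptotic range with this finite check yields $\omega(n)\le g(n)$ for all $n\ge 3$.

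The main obstacle is the calibration of the constant $2.89726$. This forces one to keep every error term in the $\pi(y)$ upper bound and in the $\theta(y)$ lower bound completely explicit rather than in $O$-form, to propagate the perturbation $\theta(y)=y\bigl(1+o(1)\bigr)$ cleanly through all three terms of $g$, and to pin down the extremal primorial so that the finite verification and the asymptotic estimate join without a gap at the chosen cutoff.
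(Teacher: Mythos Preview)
The paper does not actually prove this statement; it merely refers the reader to Robin \cite{Robin1983a}, Theorem~16. Your sketch is precisely Robin's strategy: pass to the extremal case $n=N_k=p_1\cdots p_k$, rewrite the target as $\pi(y)\le G(\theta(y))$ with $G(x)=x/\log x+x/(\log x)^2+2.89726\,x/(\log x)^3$, insert explicit Rosser--Schoenfeld bounds for $\pi$ and $\theta$, compare the resulting expansions in powers of $1/\log y$, and mop up the small primorials by direct computation. So methodologically you are exactly on track.

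There is, however, one genuine slip in the reduction. Your claim that $g$ is increasing for $t\ge 16$ is false: differentiating gives $g'(t)>0$ only once $\log\log t$ is roughly $1.9$, i.e.\ for $t$ beyond several hundred; numerically $g(16)\approx 13$, $g(100)\approx 8.7$, $g(1000)\approx 8.2$, $g(10^4)\approx 8.5$. This matters because the step ``$n\ge N_k$ and $k\le g(N_k)$ imply $k\le g(n)$'' uses monotonicity on $[N_k,\infty)$. The repair is easy: the global minimum of $g$ on $[3,\infty)$ exceeds $8$, so the inequality is automatic whenever $\omega(n)\le 8$; and for $k\ge 5$ one already has $N_k\ge 2310$, which lies in the range where $g$ is increasing, so the primorial reduction goes through. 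But as written the monotonicity assertion is wrong and the argument has a gap at that point. A minor notational point: you write $g(\theta(y))$ where you mean $g(N_k)$ with $\log N_k=\theta(y)$, i.e.\ $G(\theta(y))$ in the notation above.
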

\begin{proof}
See (\cite{Robin1983a}, Theorem 16).
\end{proof}
In 2022 Alzer and Kwong \cite{Alzer2022a} proved the following theorem.
\begin{theorem}
For $n\geq2$ we have
\begin{equation*}
\omega(2n)\leq2\omega(n)\,.
\end{equation*}
Equality holds if and only if $n=p^k$, where $p$ is an odd prime number and $k$ is a positive integer.
\end{theorem}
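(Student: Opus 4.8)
The plan is to reduce the statement $\omega(2n)\le 2\omega(n)$ to a comparison of the number of distinct odd prime divisors of $n$ against a simple count, and then to split into the two cases governed by the parity of $n$. Write $n = 2^{a} m$ with $m$ odd and $a\ge 0$, and let $t = \omega(m)$ denote the number of distinct odd prime factors of $n$. Then $\omega(n) = t + [a\ge 1]$, where $[\cdot]$ is the Iverson bracket, while $\omega(2n) = \omega(2^{a+1}m) = t + 1$, since $2n$ always carries the prime $2$. So the inequality we must prove is $t+1 \le 2\bigl(t + [a\ge 1]\bigr)$.

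First I would dispose of the case $a\ge 1$, i.e. $n$ even: there the right-hand side is $2t+2 \ge t+1$ since $t\ge 0$, and the inequality is strict unless $t = 0$, which is impossible because then $n$ would be a power of $2$ and $2n$ another power of $2$, giving $\omega(2n) = 1$ and $2\omega(n) = 2$, still a strict inequality. Hence for even $n$ the inequality is always strict. Next I would handle the case $a = 0$, i.e. $n$ odd: here $\omega(n) = t$ and $\omega(2n) = t+1$, so the claim becomes $t+1 \le 2t$, i.e. $t\ge 1$, which holds because $n\ge 2$ and $n$ odd forces $n$ to have at least one (odd) prime factor. Equality $t+1 = 2t$ occurs exactly when $t = 1$, that is, when $n$ is a power of a single odd prime, $n = p^{k}$ with $p$ odd and $k\ge 1$; this is precisely the asserted equality case.

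The only remaining point is to confirm that no equality arises in the even case, which I addressed above: if $n$ is even then $\omega(2n) = \omega(n)$ (adding a $2$ that is already present) so $\omega(2n) = \omega(n) < 2\omega(n)$ strictly, using $\omega(n)\ge 1$. Combining the two cases, $\omega(2n)\le 2\omega(n)$ for all $n\ge 2$, with equality if and only if $n = p^{k}$ for an odd prime $p$ and a positive integer $k$.

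I do not anticipate a genuine obstacle here: the whole argument is a bookkeeping of the exponent of $2$ in $n$ versus in $2n$, and the multiplicativity of $\omega$ on the coprime factorization $n = 2^{a}\cdot m$. The one place that merits care is the equality analysis — making sure the even case contributes no equality and that the odd case contributes exactly the prime-power family — but this is immediate once the reduction $\omega(2n) - 2\omega(n) = (1 - t) - 2[a\ge 1] \cdot$ (appropriately signed) is written out explicitly. If one wanted a slicker phrasing, one could note $\omega(2n) = \omega(n) + [n\text{ odd}]$ and $2\omega(n) - \omega(n) = \omega(n)\ge 1 \ge [n\text{ odd}]$, with equality iff $n$ is odd and $\omega(n) = 1$.
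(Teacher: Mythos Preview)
Your proof is correct. The paper itself does not supply an argument for this statement but simply refers the reader to Alzer and Kwong (2022), so there is no in-paper proof to compare against. Your final one-line version---$\omega(2n)=\omega(n)+[n\text{ odd}]$, hence $2\omega(n)-\omega(2n)=\omega(n)-[n\text{ odd}]\ge 0$, with equality iff $n$ is odd and $\omega(n)=1$---is clean and complete, and is essentially the natural proof.

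One small wording slip worth tidying: in the even case you write that $2t+2\ge t+1$ is ``strict unless $t=0$,'' but in fact $2t+2>t+1$ holds for every $t\ge 0$ (equality would require $t=-1$). You then check the $t=0$ sub-case separately and find strict inequality anyway, so the conclusion is unaffected; the cleaner phrasing you give later---if $n$ is even then $\omega(2n)=\omega(n)<2\omega(n)$ since $\omega(n)\ge 1$---already handles the whole even case in one stroke and makes the earlier detour unnecessary.
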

\begin{proof}
See (\cite{Alzer2022a}, Lemma 2).
\end{proof}

\section{The functions {\boldmath$\varphi(n)$} and {\boldmath$\sigma(n)$}}
\indent

The most famous inequality connecting $\varphi(n)$ and $\sigma(n)$ is contained in the following theorem.
\begin{theorem}
We have
\begin{equation*}
\frac{6}{\pi^2}<\frac{\varphi(n)\,\sigma(n)}{n^2}<1 \,.
\end{equation*}
\end{theorem}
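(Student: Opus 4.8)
The plan is to reduce the two-sided estimate to a single clean product identity and then bound that product from both sides. First I would insert the standard multiplicative formulas recorded in Section~2, namely $\varphi(n)=n\prod_{i=1}^{r}\left(1-\frac{1}{p_i}\right)$ and $\sigma(n)=\prod_{i=1}^{r}\frac{p_i^{a_i+1}-1}{p_i-1}$, together with $n=\prod_{i=1}^{r}p_i^{a_i}$, into the quotient $\varphi(n)\sigma(n)/n^2$. The decisive simplification is the elementary identity $\left(1-\frac{1}{p_i}\right)\cdot\frac{1}{p_i-1}=\frac{1}{p_i}$, which collapses everything to
\begin{equation*}
\frac{\varphi(n)\,\sigma(n)}{n^2}=\prod_{i=1}^{r}\left(1-\frac{1}{p_i^{\,a_i+1}}\right).
\end{equation*}
I expect this algebraic reduction to be the real content of the proof; once it is available, both inequalities are short.

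For the upper bound, observe that for $n>1$ we have $r\geq 1$ and every factor $1-p_i^{-(a_i+1)}$ lies strictly in $(0,1)$, so the nonempty finite product is strictly less than $1$. (For $n=1$ the empty product equals $1$, so the strict upper bound should be read for $n\geq 2$; this is the only caveat on the statement.)

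For the lower bound, each exponent satisfies $a_i\geq 1$, hence $a_i+1\geq 2$ and $1-p_i^{-(a_i+1)}\geq 1-p_i^{-2}$. Therefore
\begin{equation*}
\frac{\varphi(n)\,\sigma(n)}{n^2}\geq\prod_{i=1}^{r}\left(1-\frac{1}{p_i^{2}}\right)>\prod_{p}\left(1-\frac{1}{p^{2}}\right)=\frac{1}{\zeta(2)}=\frac{6}{\pi^2},
\end{equation*}
where the evaluation $\prod_{p}\left(1-p^{-2}\right)^{-1}=\zeta(2)=\pi^2/6$ is the classical Euler product. The one point that deserves care is the strictness of the middle inequality: the product on the left ranges only over the finitely many primes dividing $n$, while the Euler product on the right contains infinitely many further factors, each strictly smaller than $1$; it is precisely the infinitude of primes that makes the inequality strict. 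Combining the two bounds gives the claimed chain of inequalities.
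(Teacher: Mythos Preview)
Your argument is correct and is exactly the classical derivation of this inequality: the paper itself does not give an independent proof but simply refers to Hardy--Wright, Theorem~329, whose proof proceeds via the same product identity $\varphi(n)\sigma(n)/n^2=\prod_{p^a\|n}(1-p^{-(a+1)})$ and the same comparison with $\prod_p(1-p^{-2})=1/\zeta(2)$. Your remark that the strict upper bound requires $n\geq 2$ is also the standard caveat.
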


\begin{proof}
See (\cite{Hardy-Wright}, Theorem 329).
\end{proof}
In 1968 Oppenheim \cite{Oppenheim} showed that
\begin{equation*}
\varphi\left(n\left[\frac{\sigma(n)}{n}\right]\right)\leq n\,.
\end{equation*}
Further in 1987 Atanassov \cite{Atanassov1987} and S\'{a}ndor \cite{Sandor1987} established that if $n\geq2$, then
\begin{equation*}
\sigma(n)^{\varphi(n)}<n^n<\varphi(n)^{\sigma(n)}\,.
\end{equation*}
In 1988 Atanassov \cite{Atanassov1988} proved that if $n$ is odd, then
\begin{equation*}
\sigma(n)\leq \varphi(n)P(n)
\end{equation*}
and if $n$ is even, then
\begin{equation*}
\sigma(n)\leq 4\varphi(n)P(n)\,,
\end{equation*}
where $P(n)$ denotes the greatest prime factor of $n$.
Further in 2008  Atanassov \cite{Atanassov2008} showed that
\begin{equation*}
\frac{6}{\pi^2}\varphi^2(n)>\sigma(n)\sqrt{n}
\end{equation*}
for all odd numbers except for the values $n=3, 5, 9, 15, 27, 45, 75$, and
\begin{equation*}
\frac{18\sqrt{2}}{\pi^2}\varphi^2(n)>\sigma(n)\sqrt{n}
\end{equation*}
for all even numbers $n>6$, and
\begin{equation*}
\varphi^3(n)>\sigma^2(n)
\end{equation*}
for all $n\geq5$ except for the values $n=6, 8, 12, 16, 18, 24$.
In 2019 S\'{a}ndor and Atanassov \cite{Sandor2019} deduced that if $n\geq2$, then
\begin{align*}
&\varphi(n)+\sigma(n)\geq2n\,,\\
&\varphi(n)^{\varphi(n)}\sigma(n)^{\sigma(n)}>n^{\varphi(n)+\sigma(n)}\,,\\
&\varphi(n)^{\sigma(n)}\sigma(n)^{\varphi(n)}<n^{\varphi(n)+\sigma(n)}\,.
\end{align*}
Other results concerning inequalities with $\varphi(n)$ and $\sigma(n)$ can be found in
\cite{Atanassov1991a}, \cite{Atanassov1991b}, \cite{Atanassov1991c}, \cite{Atanassov2006}.

\section{The functions {\boldmath$\varphi(n)$} and {\boldmath$d(n)$}}
\indent

In 1967 Sivaramakrishnan \cite{Sivaramakrishnan1967} proved that for all natural numbers
\begin{equation*}
\varphi(n)\,d(n)\geq n\,.
\end{equation*}
Further in 1972 Porubsky \cite{Porubsky} showed that if $n\neq4$, then
\begin{equation*}
\varphi(n)\,d^2(n)\leq n^2\,.
\end{equation*}
Thanks to P\'{o}lya and Szeg\"{o} \cite{Polya} from 1976 we have that if $n>30$, then
\begin{equation*}
\varphi(n)>d(n)\,.
\end{equation*}
In 1987 S\'{a}ndor \cite{Sandor1987} proved the inequality
\begin{equation*}
\varphi\left(n\left[\frac{n}{d(n)}\right]\right)\leq \varphi^2(n)\,.
\end{equation*}
Further in 1989 S\'{a}ndor \cite{Sandor1989} proved that for all natural numbers
\begin{equation*}
\varphi(n)\,d(n)\geq \varphi(n)+n-1\,.
\end{equation*}
In 2010 S\'{a}ndor \cite{Sandor2010} showed that if $n\neq4$ is composite, then
\begin{equation*}
\varphi(n)+d(n)<n
\end{equation*}
and that for $n\geq2$ we have
\begin{equation*}
\varphi(n)+d(n)<n+1\,.
\end{equation*}
Other results concerning inequalities with $\varphi(n)$ and $d(n)$ can be found in
\cite{Sandor2020}, \cite{Sandor2022}.

\section{The functions {\boldmath$d(n)$} and {\boldmath$\sigma(n)$}}
\indent

In 1965 Sivaramakrishnan and Venkataraman \cite{Sivaramakrishnan1965} established that for all natural numbers
\begin{equation*}
\frac{\sigma(n)}{d(n)}\geq\sqrt{n}\,.
\end{equation*}
On the other hand it is known that for all natural numbers
\begin{equation*}
\frac{\sigma(n)}{d(n)}\leq\frac{n+1}{2}
\end{equation*}
which is due to E. S. Langford  \cite{ Mitrinovic}.
In 1987 S\'{a}ndor \cite{Sandor1987} proved the inequalities
\begin{equation*}
\frac{d(mn)}{d(m)\,d(n)}\leq \frac{\sigma(mn)}{\sigma(m)\,\sigma(n)}\,,
\end{equation*}
\begin{equation*}
\frac{(d(mn))^2}{d(m^2)\,d(n^2)}\frac{4mn}{(mn+1)^2}\leq\frac{(\sigma(mn))^2}{\sigma(m^2)\,\sigma(n^2)}\leq\frac{(d(mn))^2}{d(m^2)\,d(n^2)}
\end{equation*}
for all  $m, n = 1, 2, 3, \ldots$
The next year S\'{a}ndor \cite{Sandor1988} deduced that
\begin{equation*}
\frac{d(mn)}{d(m)}\geq\frac{\sigma(mn)}{n\sigma(m)}
\end{equation*}
for all  $m, n = 1, 2, 3, \ldots$

\section{The functions {\boldmath$\varphi(n)$} and {\boldmath$\psi(n)$}}
\indent

As a direct consequence of the formulas of $\varphi(n)$ and $\psi(n)$, we have
\begin{equation*}
\varphi(n)\leq\psi(n)\,.
\end{equation*}
In 1988 S\'{a}ndor \cite{Sandor1988a} proved the following inequalities
\begin{align*}
&\varphi\left(n\left[\frac{\psi(n)}{n}\right]\right)\leq n\,,\\
&\psi(n)^{\varphi(n)}<n^n  \quad \mbox{ for all } \quad n\geq2\,,\\
&\varphi(n)^{\psi(n)}>n^n  \quad \mbox{ if all prime factors of } n  \mbox{ are }\geq5\,,
\end{align*}
Further in 2008  Atanassov \cite{Atanassov2008} showed that
\begin{equation*}
\frac{6}{\pi^2}\varphi^2(n)>\psi(n)\sqrt{n}
\end{equation*}
for all odd numbers  except for the values $n=3, 5, 9, 15, 27, 45$, and
\begin{equation*}
\frac{18\sqrt{2}}{\pi^2}\varphi^2(n)>\psi(n)\sqrt{n}
\end{equation*}
for all even numbers $n>6$, and
\begin{equation*}
\varphi^3(n)>\psi^2(n)
\end{equation*}
for all $n\geq5$ except for the values $n=6, 8, 12, 16, 18, 24$, and
\begin{equation*}
\varphi^4(n)\geq\psi^3(n)
\end{equation*}
for all $n\geq5$ except for the values
$n=6, 8, 9, 12, 16, 18, 24, 32, 36, 48, 54, 72, 108, 144, 162, 192$, $216, 288, 324, 384, 432, 486, 576$.
In 2011 Sol\'{e} and Planat \cite{Sole} established the following theorem.
\begin{theorem}
For $n\geq2$ we have
\begin{equation*}
\frac{6}{\pi^2}<\frac{\varphi(n)\,\psi(n)}{n^2}<1 \,.
\end{equation*}
\end{theorem}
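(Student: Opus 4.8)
The plan is to collapse the two defining product formulas into a single product over the distinct prime divisors of $n$, and then to compare that product with the Euler product for $\zeta(2)$. Writing $n=p_1^{a_1}\cdots p_r^{a_r}$ and substituting the stated formulas for $\varphi$ and $\psi$ gives
\begin{equation*}
\frac{\varphi(n)\,\psi(n)}{n^2}=\prod_{i=1}^{r}\left(1-\frac{1}{p_i}\right)\left(1+\frac{1}{p_i}\right)=\prod_{i=1}^{r}\left(1-\frac{1}{p_i^2}\right).
\end{equation*}
Since $n\geq2$ we have $r\geq1$, so this is a nonempty finite product of numbers each lying strictly in $(0,1)$.

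The upper bound is then immediate: a nonempty finite product of factors each $<1$ is itself $<1$.

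For the lower bound I would invoke the Euler product identity $6/\pi^2=1/\zeta(2)=\prod_p\left(1-1/p^2\right)$, the product running over all primes. Splitting the right-hand side according to whether $p$ divides $n$ yields
\begin{equation*}
\frac{6}{\pi^2}=\left(\prod_{i=1}^{r}\left(1-\frac{1}{p_i^2}\right)\right)\cdot\prod_{p\nmid n}\left(1-\frac{1}{p^2}\right),
\end{equation*}
and dividing shows that $\varphi(n)\psi(n)/n^2$ strictly exceeds $6/\pi^2$ as soon as the trailing sub-product $\prod_{p\nmid n}\left(1-1/p^2\right)$ is strictly less than $1$.

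The one point that needs genuine (if small) care — and the step I regard as the main obstacle — is establishing the strict inequality $\prod_{p\nmid n}\left(1-1/p^2\right)<1$ rather than merely $\leq1$. This holds because $n$ has only finitely many prime divisors while there are infinitely many primes, so this sub-product has at least one (indeed infinitely many) factor, each lying in $(0,1)$; since $\sum_p 1/p^2<\infty$ the product converges to a positive limit, which is therefore $<1$. With that in hand both inequalities follow, and the chain of equalities above also makes transparent that $6/\pi^2$ and $1$ are the optimal constants: the quotient tends to $6/\pi^2$ along the primorials and to $1$ along the primes.
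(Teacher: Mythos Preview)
Your argument is correct and complete. The paper itself does not supply a proof but merely cites Sol\'{e} and Planat~\cite{Sole}; your derivation via $\varphi(n)\psi(n)/n^2=\prod_{p\mid n}(1-1/p^2)$ and comparison with the Euler product for $1/\zeta(2)$ is the standard route and almost certainly coincides with what is found there.
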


\begin{proof}
See (\cite{Sole}, Proposition 5).
\end{proof}
In the same year Atanassov \cite{Atanassov2011a} proved that for all natural numbers $n\geq2$ we have
\begin{equation*}
\varphi(n)+\psi(n)\geq2n\,.
\end{equation*}
Using the above inequality Atanassov \cite{Atanassov2011a} deduced that for all natural numbers $n\geq2$ we have
\begin{equation*}
\varphi(n)^{\varphi(n)}\psi(n)^{\psi(n)}>n^{2n}\,.
\end{equation*}
Further in 2013 Kannan and Srikanth \cite{Kannan} sharpened the above result by showing that
\begin{equation*}
\varphi(n)^{\varphi(n)}\psi(n)^{\psi(n)}>n^{\varphi(n)+\psi(n)}
\end{equation*}
for $n\geq2$. Afterwards in 2019 S\'{a}ndor and Atanassov \cite{Sandor2019} among other results
refined the above inequality, proving that for $n\geq2$ we have
\begin{align*}
n^{\varphi(n)+\psi(n)}<\left(\frac{\varphi(n)+\psi(n)}{2}\right)^{\varphi(n)+\psi(n)}&<\varphi(n)^{\varphi(n)}\psi(n)^{\psi(n)}
<\left(\frac{\varphi^2(n)+\psi^2(n)}{2}\right)^{\frac{\varphi(n)+\psi(n)}{2})}\\
&<\psi(n)^{\varphi(n)+\psi(n)}\,,\\
\left(\frac{\varphi(n)\psi(n)(\varphi(n)+\psi(n))}{\varphi^2(n)+\psi^2(n)}\right)^{\varphi(n)+\psi(n)}
&<\varphi(n)^{\psi(n)}\psi(n)^{\varphi(n)} <\left(\frac{2\varphi(n)\psi(n)}{\varphi(n)+\psi(n)}\right)^{\varphi(n)+\psi(n)}\\
&< (\varphi(n)\psi(n))^{\frac{\varphi(n)+\psi(n)}{2}}<n^{\varphi(n)+\psi(n)}\,.
\end{align*}
Subsequently in 2020 Alzer and Kwong \cite{Alzer2020} obtained that for $n\geq2$ we have
\begin{align*}
&\left(1-\frac{1}{n}\right)^{n-1}\left(1+\frac{1}{n}\right)^{n+1}
\leq\left(\frac{\varphi(n)}{n}\right)^{\varphi(n)}\left(\frac{\psi(n)}{n}\right)^{\psi(n)}\,,\\
&\left(\frac{\varphi(n)}{n}\right)^{\psi(n)}\left(\frac{\psi(n)}{n}\right)^{\varphi(n)}
\leq\left(1-\frac{1}{n}\right)^{n+1}\left(1+\frac{1}{n}\right)^{n-1}\,.
\end{align*}
Other results concerning inequalities with $\varphi(n)$ and $\psi(n)$ can be found in
\cite{Atanassov1996b}, \cite{Atanassov2011b}, \cite{Sandor1996}, \cite{Sandor2014}.

\section{The functions {\boldmath$\varphi(n)$} and {\boldmath$\pi(n)$}}
\indent

In 1951 Moser \cite{Moser} proved that if $n\geq90$, then
\begin{equation*}
\pi(n)\leq\varphi(n)\,.
\end{equation*}

\section{The functions {\boldmath$\psi(n)$} and {\boldmath$\sigma(n)$}}
\indent

It is easy to see that for $n\geq1$ we have
\begin{equation*}
\psi(n)\leq\sigma(n)\,.
\end{equation*}
In 2008 S\'{a}ndor \cite{Sandor2008b} showed that if $n\geq1$, then
\begin{equation*}
\sigma(n)<\frac{\pi^2}{6}\psi(n)\,.
\end{equation*}
Further in 2019 Atanassov and S\'{a}ndor \cite{Atanassov2019} deduced for $n\geq2$ the  inequality
\begin{equation*}
\sigma(n)^n<\psi(n)^{\sigma(n)}\,.
\end{equation*}
Other results concerning inequalities with $\psi(n)$ and $\sigma(n)$ can be found in Atanassov \cite{Atanassov2012}

\section{The functions {\boldmath$\pi(n)$} and {\boldmath$H_n$}}
\indent

In 2006 Hassani \cite{Hassani2006} showed that
\begin{equation*}
\frac{n}{H_n-\gamma-1+\frac{1}{4\log 3299}}<\pi(n)<\frac{n}{H_n-2-\frac{151}{100\log 7}}\,,
\end{equation*}
where the left inequality holds for $n\geq3299$ and the right inequality holds for $n\geq9$.

\section{The functions {\boldmath$\pi(n)$} and {\boldmath$\omega(n)$}}
\indent

In 2020 Zhang \cite{ZhangS} proved the following theorem.

\begin{theorem}
Let $n \geq 59$ be an integer. Then
\begin{equation*}
2\pi(n)-\pi(2n)\geq\omega(2n)\,.
\end{equation*}
\end{theorem}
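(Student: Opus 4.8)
The plan is to sandwich the two sides between smooth functions of $n$: the left‑hand side $2\pi(n)-\pi(2n)$ will be bounded below by a positive multiple of $n/(\log n)^2$, whereas $\omega(2n)$ grows only logarithmically in $n$. Consequently the inequality holds for all sufficiently large $n$, and only finitely many small values need to be inspected individually.

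First I would feed in the Dusart estimates recorded above. For $n\geq 599$ one has $\pi(n)\geq \frac{n}{\log n}(1+\frac{1}{\log n})$, and for every real $x>1$ one has $\pi(x)\leq \frac{x}{\log x}(1+\frac{1.2762}{\log x})$; the latter, applied with $x=2n$, combines with the former to give, after setting $L=\log n$, $M=\log(2n)=L+\log 2$ and putting everything over $L^2M^2$,
\[
2\pi(n)-\pi(2n)\ \geq\ \frac{2n}{L^2M^2}\left[(\log 2)\,LM+M^2-1.2762\,L^2\right].
\]
Expanding the bracket as a polynomial in $L$ with $\ell=\log 2$ gives $(\ell+1-1.2762)L^2+(\ell^2+2\ell)L+\ell^2$, and since $\ell+1-1.2762>0.41$ while the two remaining coefficients are positive, the bracket is at least $0.41\,L^2$. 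Hence
\[
2\pi(n)-\pi(2n)\ \geq\ \frac{0.83\,n}{(\log 2n)^2}\qquad(n\geq 599).
\]

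Next I would estimate $\omega(2n)$ from above. Since $2n$ is divisible by the product of its distinct prime factors, and that product is at least $2^{\omega(2n)}$, we obtain the elementary bound $\omega(2n)\leq \log_2(2n)$ (Robin's inequality $\omega(m)\leq 1.38402\,\log m/\log\log m$ would also serve). The theorem then reduces to $0.83\,n/(\log 2n)^2\geq \log(2n)/\log 2$, that is, to $n\geq c\,(\log 2n)^3$ with $c=(0.83\log 2)^{-1}<1.75$. As $t\mapsto t/(\log 2t)^3$ is increasing once $\log 2t>3$, this single one‑variable inequality needs to be checked at just one point somewhat above $599$; it holds from $n=650$ on, so $2\pi(n)-\pi(2n)\geq\omega(2n)$ for every $n\geq 650$. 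The finitely many cases $59\leq n\leq 649$ are then settled directly by tabulating $\pi(k)$ for $k\leq 1298$, computing $\omega(2n)$, and comparing.

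I expect the one delicate point to be the bookkeeping in the middle step: one must pick the pair of prime‑counting estimates so that the second‑order coefficient $\lambda$ in the lower bound for $\pi(n)$ satisfies $\lambda>1.2762-\log 2\approx 0.58$. Dusart's value $\lambda=1$ works, but the Rosser--Schoenfeld lower bound with $\lambda=\tfrac12$ does not, which is why we accept the slightly larger validity threshold $599$. That threshold is small enough that the residual computation is routine, and no tool beyond the explicit inequalities for $\pi(x)$ and $\omega(n)$ already quoted in the excerpt is required.
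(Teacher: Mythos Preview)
The survey does not give its own proof of this result; it simply refers the reader to Zhang's original paper. Your argument is self-contained and correct: Dusart's pair of explicit bounds does yield
\[
2\pi(n)-\pi(2n)\ \geq\ \frac{2n}{L^2M^2}\bigl[(\log 2)\,LM+M^2-1.2762\,L^2\bigr]
\]
for $n\geq 599$, the bracket is positive with leading coefficient $\log 2+1-1.2762>0.41$, and the resulting lower bound of order $n/(\log 2n)^2$ overtakes the crude estimate $\omega(2n)\leq\log_2(2n)$ from an explicit point near $n=650$, leaving only the finite range $59\leq n\leq 649$ to tabulate. This explicit-estimates-plus-finite-check shape is the natural route and is almost certainly how Zhang proceeds as well. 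One harmless slip: from a bracket $\geq 0.41\,L^2$ you obtain $0.82\,n/(\log 2n)^2$, not $0.83$; the threshold $650$ still survives with the constant $0.82$.
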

\begin{proof}
See (\cite{ZhangS}, Theorem 1).
\end{proof}
The result of Zhang was improved by Alzer and Kwong \cite{Alzer2022a}.
\begin{theorem}
Let $n \geq 71$ be an integer. Then
\begin{equation*}
2\pi(n)-\pi(2n)\geq2\omega(n)\,,
\end{equation*}
with equality if and only if $n \in \{78, 100, 102, 126\}$.
\end{theorem}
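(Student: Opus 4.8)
The plan is to produce a lower bound for $2\pi(n)-\pi(2n)$ of order $n/(\log n)^2$ from the explicit estimates for $\pi$ recorded earlier, to compare it with Robin's explicit upper bound for $\omega(n)$, and to settle the small values by a direct computation. Write $L=\log n$ and $\lambda=\log 2$. For $n\ge 599$ Dusart's lower bound $\pi(x)\ge\frac{x}{\log x}\bigl(1+\frac1{\log x}\bigr)$ applies at $x=n$, and his upper bound $\pi(x)\le\frac{x}{\log x}\bigl(1+\frac{1.2762}{\log x}\bigr)$ applies at $x=2n$; subtracting gives
\[
2\pi(n)-\pi(2n)\ \ge\ 2n\left[\frac{\lambda}{L(L+\lambda)}+\frac1{L^2}-\frac{1.2762}{(L+\lambda)^2}\right]=:2n\,h(L).
\]
A short one-variable analysis shows that $L^2h(L)$ is strictly decreasing for $L\ge\log 599$ and tends to $\lambda+1-1.2762=0.4169\ldots>0.416$; hence $h(L)>0.416/L^2$ and $2\pi(n)-\pi(2n)>0.832\,n/(\log n)^2$ for every integer $n\ge 599$.

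On the other side, Robin's bound $\omega(n)\le 1.38402\,\log n/\log\log n$ gives $2\omega(n)\le 2.76804\,\log n/\log\log n$ for $n\ge 3$, so for $n\ge 599$ it suffices to verify
\[
\frac{0.832\,n}{(\log n)^2}\ \ge\ \frac{2.76804\,\log n}{\log\log n},\qquad\text{i.e.}\qquad n\ \ge\ 3.33\,\frac{(\log n)^3}{\log\log n}.
\]
This holds with room to spare at $n=599$, and $\frac{d}{dn}\bigl(n-3.33(\log n)^3/\log\log n\bigr)=1-3.33\,\frac{(\log n)^2(3\log\log n-1)}{n(\log\log n)^2}$ is positive for all $n\ge 599$ and increases to $1$, so the inequality persists for every $n\ge 599$. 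Combining the two halves yields $2\pi(n)-\pi(2n)>2\omega(n)$, \emph{strictly}, for all $n\ge 599$, which in particular rules out equality in that range.

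It remains to handle the finitely many integers $71\le n\le 598$ by direct evaluation of $\pi(n)$, $\pi(2n)$ and $\omega(n)$ (only the values of $\pi$ up to $1196$ are needed): this confirms the inequality there and exhibits $n\in\{78,100,102,126\}$ as the only cases of equality, for which $2\pi(n)-\pi(2n)=6,4,6,6$ and $2\omega(n)=6,4,6,6$. The delicate point is the calibration in the first step: the two-term Dusart estimates are only barely strong enough — their limiting constant $0.4169\ldots$ just beats Robin's $\omega$-bound from $n\approx 599$ on — so one must check carefully that $L^2h(L)$ never drops below this limiting value on $[\log 599,\infty)$ and that the threshold inequality $n\ge 3.33(\log n)^3/\log\log n$ really does hold from $n=599$ onward. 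A cruder $\pi$-estimate would push the threshold far up and enlarge the finite check accordingly; the sharper three-term Dusart bounds would give extra room but are not required here.
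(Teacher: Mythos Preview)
The paper is a survey and does not give a proof of this statement; it simply refers the reader to Alzer and Kwong's original paper. Your argument is correct and follows the natural route: combine explicit Chebyshev-type estimates for $\pi$ (here Dusart's two-term bounds) to obtain a lower bound for $2\pi(n)-\pi(2n)$ of order $n/(\log n)^2$, pit this against Robin's explicit bound $\omega(n)\le 1.38402\,\log n/\log\log n$, verify the resulting elementary inequality $n\ge 3.33(\log n)^3/\log\log n$ from $n=599$ onward, and finish $71\le n\le 598$ by direct computation. The calculus details you sketch all check: writing $u=L/(L+\lambda)$ one has $L^2h(L)=\lambda u+1-1.2762\,u^2$, whose $u$-derivative $\lambda-2.5524\,u$ is negative once $u>0.272$, i.e.\ for all $n\ge 2$, so $L^2h(L)$ decreases to its limit $0.4169\ldots$; and at $n=599$ the right-hand side $3.33(\log n)^3/\log\log n\approx 469$ while the subtracted term in $f'(n)$ is $\approx 0.30$, with both factors $(\log n)^2/n$ and $(3t-1)/t^2$ (for $t=\log\log n$) decreasing thereafter. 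The spot-checked equality cases $n\in\{78,100,102,126\}$ are correct. This is presumably close in spirit to the Alzer--Kwong proof, which likewise must trade on explicit prime-counting estimates against an $\omega$-bound and a finite verification.
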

\begin{proof}
See (\cite{Alzer2022a}, Theorem 1).
\end{proof}

\section{The functions {\boldmath$d(n)$}, {\boldmath$\omega(n)$} and {\boldmath$\Omega(n)$}}
\indent

It is well known (\cite{Hardy-Wright}, $\S$ 22.13) that the normal order of the divisor function satisfies
\begin{equation*}
2^{\omega(n)}\leq d(n)\leq2^{\Omega(n)} \,.
\end{equation*}
On the other hand using the definition of $d(n)$ and the arithmetic geometric mean inequality we deduce
\begin{equation*}
d(n)\leq \left(1+\frac{\Omega(n)}{\omega(n)}\right)^{\omega(n)}
\end{equation*}
for $n\geq2$.

In 1987 Somasundaram \cite{Somasundaram} showed that
\begin{equation*}
d(n)\leq \left(\log\left(n\prod_{p | n}p\right)^\frac{1}{\omega(n)}\right)^{\omega(n)}\prod_{p | n}\frac{1}{\log p}
\end{equation*}
for $n\geq2$.

In 1989 S\'{a}ndor (\cite{SandorHandbook}, II. 3, p. 41) proved that
\begin{equation*}
\frac{d(n^2)}{d(n)}\geq \left(\frac{3}{2}\right)^{\omega(n)}\,.
\end{equation*}
Other results concerning inequalities with $d(n)$, $\omega(n)$ and $\Omega(n)$ can be found in
\cite{Koninck}, \cite{Letendre}, \cite{Ramanujan}.

\section{The functions {\boldmath$\varphi(n)$}, {\boldmath$d(n)$} and {\boldmath$\sigma(n)$}}
\indent

Thanks to Makowski \cite{Makowski} from 1964 we have that if $n>1$, then
\begin{equation*}
\varphi(n)\,d^2(n)>\sigma(n)\,.
\end{equation*}
Afterwards in 1988 S\'{a}ndor \cite{Sandor1988a} proved that for all odd numbers
\begin{equation*}
\varphi(n)\,d(n)\geq \sigma(n)\,.
\end{equation*}

\section{The functions {\boldmath$\varphi(n)$}, {\boldmath$\psi(n)$} and {\boldmath$\sigma(n)$}}
\indent

It is well known that
\begin{equation*}
\varphi(n)\leq\psi(n)\leq\sigma(n)\,.
\end{equation*}
In 1989 S\'{a}ndor \cite{Sandor1989} proved that for all natural numbers
\begin{equation*}
\sigma(n)\leq \varphi(n)+d(n)(n-\varphi(n))\,.
\end{equation*}
Further in 2013 Atanassov \cite{Atanassov2013} formulated and proved one of the most elegant inequality
concerning the functions $\varphi(n)$, $\psi(n)$ and $\sigma(n)$.
Using induction on $\Omega(n)$ he proved that for every natural number $n\geq2$ we have
\begin{equation*}
\varphi(n)\,\psi(n)\,\sigma(n)\geq n^3+n^2-n-1\,.
\end{equation*}
In 2019 Atanassov and S\'{a}ndor \cite{Atanassov2019} derived for $n\geq2$ the inequality
\begin{equation*}
\psi(n)^n>\sigma(n)^{\varphi(n)}\,.
\end{equation*}
Afterwards in  2021 S\'{a}ndor and  Atanassov \cite{Sandor-Atanassov}, Theorem 1.2.1 showed that for $n\geq19$ we have
\begin{equation*}
\varphi(n)^{\psi(n)}>\sigma(n)^n\,.
\end{equation*}
Recently the author \cite{Dimitrov} showed that
\begin{align*}
&\varphi^2(n)+\psi^2(n)+\sigma^2(n)\geq 3n^2+2n+3\,,\\
&\varphi(n)\psi(n)+\varphi(n)\sigma(n)+\sigma(n)\psi(n)\geq 3n^2+2n-1
\end{align*}
for every natural number $n\geq2$.
Other results concerning inequalities with $\varphi(n)$, $\psi(n)$ and $\sigma(n)$ can be found in
\cite{Atanassov1995}, \cite{Atanassov1996a}, \cite{Atanassov2010}, \cite{Sandor2014}.

\section{The functions {\boldmath$\varphi(n)$}, {\boldmath$\psi(n)$}, {\boldmath$\sigma(n)$}, {\boldmath$\omega(n)$} and {\boldmath$\Omega(n)$}}
\indent

In 2014 Atanassov \cite{Atanassov2014} established that for every natural number $n\geq2$ we have
\begin{equation*}
\frac{\Omega(n)-\omega(n)}{2^{\Omega(n)-\omega(n)}}\varphi(n)<\sigma(n)-\psi(n)<2^{\Omega(n)-1}\varphi(n)\,.
\end{equation*}

\section{The Riemann hypothesis and the arithmetic functions}
\indent

The Riemann zeta function is a mathematical function of a complex variable defined as
\begin{equation*}
\zeta(s)=\sum _{n=1}^{\infty}\frac{1}{n^{s}}=\frac {1}{1^{s}}+\frac {1}{2^{s}}+\frac {1}{3^{s}}+\cdots
\end{equation*}
for $\operatorname{Re}(s)>1$. It is well known that its analytic continuation for $\operatorname{Re}(s)>0$ is given by the formula
\begin{equation*}
\zeta(s)=\sum\limits_{n=1}^\infty\frac{1}{n^s}+\frac{N^{1-s}}{s-1}-\frac{1}{2}N^{-s}
-s\int\limits_N^\infty\frac{\psi(t)}{t^{s+1}}dt\,,
\end{equation*}
where $N\geq1$, $\psi(t)=\{t\}-1/2$ and $\{t\}$ denotes the fractional part of $t$.
The Riemann zeta function has zeros at $-2, -4, -6, \ldots$ These are called the trivial zeros.
It is known that any non-trivial zero lies in the open strip
${\displaystyle \{s\in \mathbb {C} :0<\operatorname {Re} (s)<1\}}$, which is called the critical strip.
The set ${\displaystyle \{s\in \mathbb {C} :\operatorname {Re} (s)=1/2\}}$ is called the critical line.
The Riemann hypothesis asserts that all non-trivial zeros are on the critical line.
Many consider it to be the most important unsolved problem in pure mathematics
because its solution implies results about the distribution of prime numbers.
In 1915 Ramanujan showed that under the assumption of the Riemann hypothesis, the inequality
\begin{equation}\label{sigmaegamma}
\sigma(n)<e^\gamma n\log \log n
\end{equation}
holds for all sufficiently large $n$.
Afterwards in 1984 Robin \cite{Robin1984}  proved the following theorem.
\begin{theorem} (Robin's criterion)
The Riemann hypothesis is true if and only if the inequality \eqref{sigmaegamma}
holds for $n\geq 5041$.
\end{theorem}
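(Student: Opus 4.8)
The plan is to prove the two implications separately, in each case reducing the problem to the behaviour of $\sigma(n)/(n\log\log n)$ along the \emph{colossally abundant} integers, i.e.\ those $n$ for which there is an $\varepsilon>0$ with $\sigma(m)/m^{1+\varepsilon}\le\sigma(n)/n^{1+\varepsilon}$ for all $m\ge1$. Such $n$ have the explicit form $n=\prod_{p\le x}p^{a_p}$ with exponents $a_p$ nonincreasing in $p$, and the standard extremality argument (for a prescribed size, $\sigma(n)/n$ is largest on such $n$) shows that if $\sigma(n)<e^\gamma n\log\log n$ fails anywhere then it already fails at a colossally abundant $n$; so it suffices to control the ratio on this sparse, structured family. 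For a colossally abundant $n$ with largest prime factor near $x$, the prime number theorem gives $\log n=\theta(x)+O(\sqrt{x}\log x)$, where $\theta(x)=\sum_{p\le x}\log p$, so that $\log\log n$ and $\log x$ coincide up to lower-order terms, while
\begin{equation*}
\log\frac{\sigma(n)}{n}=\sum_{p\le x}\log\frac{p}{p-1}+\sum_{p\le x}\log\bigl(1-p^{-a_p-1}\bigr),
\end{equation*}
the last sum being explicitly bounded because $a_p\ge2$ only for the $O(\sqrt{x})$ smallest primes.

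For the direction ``RH $\Rightarrow$ the inequality holds for $n\ge5041$'', I would combine the classical Gronwall theorem $\limsup_{n\to\infty}\sigma(n)/(n\log\log n)=e^\gamma$ with Ramanujan's conditional result, stated above, that under RH the inequality holds for all sufficiently large $n$; the task is to make ``sufficiently large'' effective and then to push the threshold down so that $5040$ becomes the last exception. For this I would use a Rosser--Schoenfeld-type evaluation $\sum_{p\le x}\log\frac{p}{p-1}=\log\log x+\gamma+E(x)$, in which $E(x)$ is expressed through $\int_x^\infty\frac{\theta(t)-t}{t^2}\,dt$ and related integrals; under RH the bound $\theta(t)-t=O(\sqrt{t}\log^2 t)$ makes $E(x)$ negative of size about $c/\sqrt{\log x}$ once the dominant term is inserted. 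Together with the explicit bound for the higher-prime-power correction this yields $\log\frac{\sigma(n)}{n}<\gamma+\log\log\log n$ for every colossally abundant $n$ beyond an explicit bound; a finite check of the colossally abundant numbers below that bound then shows $n=5040$ to be the last failure, so the inequality holds exactly for $n\ge5041$.

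For the converse, I argue by contraposition. Suppose RH fails and set $\Theta=\sup\{\operatorname{Re}\rho:\zeta(\rho)=0\}>\tfrac12$. Landau's oscillation theorem, applied to the explicit formula, gives $\theta(x)-x=\Omega_\pm\bigl(x^{\Theta-\varepsilon}\bigr)$ for every $\varepsilon>0$. Inserting the \emph{negative} oscillation of $\theta(x)-x$ into the formula for $E(x)$ shows that, along a suitable subsequence of colossally abundant numbers, the quantity $\log\frac{\sigma(n)}{n}-\gamma-\log\log\log n$ picks up a positive contribution of order $(\log n)^{\Theta-1-\varepsilon}$ (recall $x\asymp\log n$), which eventually dominates the competing negative term of order $(\log n)^{-1/2}$ since $\Theta-1>-\tfrac12$. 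Hence $\sigma(n)>e^\gamma n\log\log n$ for infinitely many $n$, in particular for some $n\ge5041$, contradicting the hypothesis; this proves that the inequality implies RH.

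The hard part is the quantitative bookkeeping in the converse: one must turn the purely qualitative $\Omega_-$ statement for $\theta(x)-x$ into an effective lower bound showing $\limsup_{n}\bigl(\sigma(n)/(n\log\log n)-e^\gamma\bigr)(\log n)^{1/2}=+\infty$, and one must guarantee that this is realised by \emph{colossally abundant} $n$ and not merely by some unrestricted sequence — which requires the passage from $\log n$ to $\theta(x)$ and the higher-prime-power correction to be controlled uniformly along the oscillation, and uses that consecutive colossally abundant numbers are close on the $\log\log$ scale. On the RH side the analogous, more routine difficulty is the explicit constant-chasing needed to land precisely on the threshold $5041$ — equivalently, to certify by a finite computation that $5040$ is the final exception — rather than on a larger, non-optimal value.
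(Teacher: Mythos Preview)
The paper does not actually prove this theorem: its entire proof is the single line ``See (\cite{Robin1984}, Theorem 1).'' This is a survey article, and for Robin's criterion it merely records the statement and points the reader to Robin's original 1984 paper. So there is no argument in the paper to compare your proposal against.

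That said, your outline is a faithful sketch of the strategy Robin himself used, so in spirit you are reproducing what the paper's citation points to. The reduction to colossally abundant integers, the expression of $\log(\sigma(n)/n)$ through $\sum_{p\le x}\log\frac{p}{p-1}$ plus a higher-prime-power correction, the use of an explicit Mertens-type evaluation with error term governed by $\theta(t)-t$, and---for the converse---the appeal to an $\Omega_\pm$ oscillation of $\theta(x)-x$ of exponent $\Theta-\varepsilon$ when RH fails, are all the correct ingredients and appear in Robin's paper (the converse side drawing on ideas of Nicolas for the analogous $\varphi$-criterion). Your identification of the genuinely delicate points---turning the qualitative $\Omega_-$ into a quantitative statement that actually beats the $(\log n)^{-1/2}$ competing term along a sequence of colossally abundant $n$, and the finite verification down to $5040$---is also accurate.

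One small caution: your extremality claim ``if the inequality fails anywhere then it already fails at a colossally abundant $n$'' is stated a bit too strongly. What Robin actually uses is that the maximal order of $\sigma(n)/(n\log\log n)$ is controlled, up to explicitly bounded factors, by its values on colossally abundant numbers (via the superabundant numbers lying between consecutive colossally abundant ones); the passage is not a one-line reduction and requires the explicit structure of those numbers. This does not break your plan, but it is one more place where real work hides behind a sentence.
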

\begin{proof}
See (\cite{Robin1984}, Theorem 1).
\end{proof}
In 2002 Lagarias \cite{Lagarias} showed that the Riemann hypothesis is equivalent to the following statement.
\begin{theorem}
The inequality
\begin{equation*}
\sigma(n)\leq H_n+e^{H_n}\log H_n
\end{equation*}
is true for every natural number $n$ if and only if the Riemann hypothesis is true.
\end{theorem}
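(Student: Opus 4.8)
The plan is to derive this equivalence from Robin's criterion — that RH holds if and only if \eqref{sigmaegamma} holds for all $n\ge 5041$ — together with elementary estimates for the harmonic numbers. Abbreviate $L(n)=H_n+e^{H_n}\log H_n$. The heart of the argument is a two-sided comparison of $L(n)$ with $e^\gamma n\log\log n$, valid for $n\ge 3$: namely $e^\gamma n\log\log n< e^{H_n}\log H_n\le e^\gamma n\log\log n+\tfrac{Cn}{\log n}$ for a suitable absolute constant $C$. The lower bound is immediate from $\gamma+\log n<H_n$: this gives $e^{H_n}>e^\gamma n$ and also $H_n>\log n$, hence $\log H_n>\log\log n>0$ for $n\ge3$, and multiplying the two inequalities yields $e^{H_n}\log H_n>e^\gamma n\log\log n$; since moreover $H_n>0$ we in fact get $L(n)>e^\gamma n\log\log n$. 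For the upper bound I would use an estimate of the shape $H_n<\gamma+\log n+\tfrac{1}{2n}$ (for instance Young's Theorem \ref{Young}), giving $e^{H_n}\le e^\gamma n\,e^{1/(2n)}\le e^\gamma n(1+\tfrac1n)$ and $\log H_n\le\log\log n+\log\!\bigl(1+\tfrac{\gamma+1/(2n)}{\log n}\bigr)\le\log\log n+\tfrac{1}{\log n}$ for $n$ large; multiplying out and absorbing the term $H_n=O(\log n)$ then gives $L(n)\le e^\gamma n\log\log n+\tfrac{Cn}{\log n}$.

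For the direction ``RH $\Rightarrow$ the inequality'': for $n\ge 5041$ Robin's criterion gives $\sigma(n)<e^\gamma n\log\log n$, and combining with the lower bound above gives $\sigma(n)<L(n)$. For $1\le n\le 5040$ the inequality is verified directly by computation; note that $n=1$ produces equality, since $\sigma(1)=1=1+e\log 1$, and this finite range requires no hypothesis whatsoever.

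For the converse I would argue by contraposition, invoking the quantitative side of Robin's work. Suppose RH fails and set $\Theta=\sup\{\operatorname{Re}\rho:\zeta(\rho)=0\}$, so that $\tfrac12<\Theta<1$; Robin \cite{Robin1984} showed that then there are infinitely many $n$ with $\sigma(n)\ge e^\gamma n\log\log n+\tfrac{c\,n\log\log n}{(\log n)^{\theta}}$ for a fixed $c>0$ and some fixed $\theta$ with $0<\theta<1$. Since $\theta<1$, the quantity $\tfrac{n\log\log n}{(\log n)^{\theta}}$ is of strictly larger order than the slack $\tfrac{Cn}{\log n}$ from the comparison lemma, so for all sufficiently large such $n$ we get $\sigma(n)>e^\gamma n\log\log n+\tfrac{Cn}{\log n}\ge L(n)$. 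Hence if RH is false the Lagarias inequality fails for infinitely many $n$; equivalently, its truth for every $n$ forces RH, completing the equivalence.

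The genuinely hard ingredient — and the step I expect to be the main obstacle — is precisely this quantitative ``RH-false'' half of Robin's theorem: converting the existence of a single zero with $\operatorname{Re}\rho>\tfrac12$ into infinitely many $n$ on which $\sigma(n)/n$ overshoots $e^\gamma\log\log n$ by an amount of order larger than $1/\log n$. By contrast, the harmonic-number asymptotics and the finite verification for $n\le 5040$ are entirely routine.
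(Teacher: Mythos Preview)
Your outline is correct and is, in substance, Lagarias's own argument in \cite{Lagarias}: compare $L(n)$ with $e^\gamma n\log\log n$ via the elementary bounds $\gamma+\log n<H_n<\gamma+\log n+\tfrac1{2n}$, then invoke Robin's criterion for the forward direction together with a finite check for $n\le5040$, and invoke the quantitative converse half of Robin's theorem for the other direction. The paper itself offers no proof at all---its entire argument is the line ``See (\cite{Lagarias}, Theorem 1.1)''---so there is no alternative approach to compare against; you have simply unpacked the cited reference.

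One minor correction: you assert $\tfrac12<\Theta<1$, but a priori one only knows $\tfrac12<\Theta\le1$ (the classical zero-free region does not bound $\Theta$ strictly below $1$). This is harmless for the argument, since Robin's result supplies an exponent $\beta\in(0,\tfrac12)$---hence certainly $\beta<1$---as soon as a single zero with real part exceeding $\tfrac12$ exists, independent of the value of $\Theta$. Your identification of this quantitative half of Robin's theorem as the genuinely deep ingredient is exactly right.
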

\begin{proof}
See (\cite{Lagarias}, Theorem 1.1).
\end{proof}
The Riemann hypothesis implies a much tighter bound on the error in the estimate for
$\pi (x)$ and hence to a more regular distribution of prime numbers.
In this connection Schoenfeld \cite{Schoenfeld1976} has proved the next theorem.
\begin{theorem}\label{pilix}
 If the Riemann hypothesis holds then
\begin{align*}
&|\pi(x)-\operatorname{li}(x)|<\frac{\sqrt x\log x}{8\pi} \quad \text{ for } \quad  x\geq 2657\,\\
&\pi(x)-\operatorname{li}(x)<\frac{\sqrt x\log x}{8\pi} \quad \text{ for } \quad  x\geq 3/2\,.
\end{align*}
\end{theorem}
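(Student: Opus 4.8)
The plan is to reduce both inequalities to the corresponding bound for Riemann's prime-power counting function $\Pi(x)=\sum_{p^{k}\le x}\tfrac1k=\pi(x)+\tfrac12\pi(x^{1/2})+\tfrac13\pi(x^{1/3})+\cdots$, for which the Riemann explicit formula has the clean shape
$$\Pi(x)=\operatorname{li}(x)-\sum_{\rho}\operatorname{li}(x^{\rho})-\log 2+\int_{x}^{\infty}\frac{dt}{t(t^{2}-1)\log t},$$
the sum running over the nontrivial zeros $\rho$ grouped in conjugate pairs. Under the Riemann hypothesis every $\rho=\tfrac12+i\gamma$, so $|x^{\rho}|=\sqrt{x}$, and the whole task becomes to show that $\bigl|\sum_{\rho}\operatorname{li}(x^{\rho})\bigr|\le\sqrt{x}\log x/(8\pi)$ once $x\ge 2657$; the remaining pieces — the integral, the constant $\log 2$, and the passage from $\Pi$ back to $\pi$ — are then absorbed as genuinely lower-order contributions.

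First I would record the estimate $\operatorname{li}(x^{\rho})=x^{\rho}/(\rho\log x)+O\!\bigl(\sqrt{x}/(|\rho|^{2}\log^{2}x)\bigr)$, obtained by a single integration by parts; since $\sum_{\rho}|\rho|^{-2}$ converges, these error terms contribute only $O(\sqrt{x}/\log^{2}x)$ altogether. Truncating the zero sum at height $T$, its head is at most $\tfrac{\sqrt{x}}{\log x}\sum_{|\gamma|\le T}|\rho|^{-1}$, and here the Riemann--von Mangoldt counting formula $N(T)=\tfrac{T}{2\pi}\log\tfrac{T}{2\pi}-\tfrac{T}{2\pi}+O(\log T)$ gives $\sum_{0<\gamma\le T}\gamma^{-1}=\tfrac{1}{4\pi}\log^{2}T+O(\log T)$, hence, doubling for conjugate zeros, a head bounded by $\tfrac{\sqrt{x}}{\log x}\cdot\tfrac{1}{2\pi}\log^{2}T$. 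Taking $T$ of order $\sqrt{x}$, so that $\log T=\tfrac12\log x+O(1)$, turns this into $\tfrac{\sqrt{x}\log x}{8\pi}$ — the arithmetic $\tfrac{1}{2\pi}\cdot(\tfrac12)^{2}=\tfrac{1}{8\pi}$ is precisely where the theorem's constant originates. The tail $|\gamma|>T$ cannot be treated by absolute values, since $\sum_{\rho}|\rho|^{-1}$ diverges; one must instead exploit the oscillation of the terms through a smoothing of $\Pi$ (equivalently of $\sum_{p^{k}\le x}\log p$), whose truncated explicit formula carries a remainder that is $o(\sqrt{x}\log x)$ for $T$ chosen just above $\sqrt{x}$. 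Making this last point fully explicit — under crude bounds the remainder is already of size $\sqrt{x}\log x$, the same order as the main term — so that the leading constant comes out as exactly $\tfrac{1}{8\pi}$, is the delicate heart of the argument and the step I expect to be the main obstacle.

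Finally I would pass from $\Pi$ to $\pi$: since $\pi(x)=\Pi(x)-\tfrac12\pi(x^{1/2})-\tfrac13\pi(x^{1/3})-\cdots$ and the subtracted terms sum to $O(\sqrt{x}/\log x)$, with the matching smooth corrections of the same small size, the previous paragraph gives $|\pi(x)-\operatorname{li}(x)|\le\tfrac{\sqrt{x}\log x}{8\pi}$ for all sufficiently large $x$. To reach the explicit threshold $x\ge 2657$ every $O(\cdot)$ and $o(\cdot)$ above must be replaced by a numerically honest constant, and the inequality then checked by direct computation over the moderate range of $x$ lying below the point from which the asymptotic estimates already force it. The one-sided statement for $x\ge 3/2$ then follows a fortiori from the two-sided bound whenever $x\ge 2657$, while for $3/2\le x<2657$ it is a finite verification — in that range one in fact has $\pi(x)<\operatorname{li}(x)$, with the gap far smaller than $\tfrac{\sqrt{x}\log x}{8\pi}$.
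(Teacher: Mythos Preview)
The paper does not prove this theorem at all: its entire ``proof'' is the citation ``See (\cite{Schoenfeld1976}, Theorem 10, Corollary 1).'' This is a survey article, and every theorem in it is handled the same way --- by pointing to the original source. So there is nothing to compare your argument against within the paper itself.

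That said, your sketch is a reasonable outline of how Schoenfeld's result is actually obtained, and you correctly locate the origin of the constant $\tfrac{1}{8\pi}$ in the combination of $\sum_{0<\gamma\le T}\gamma^{-1}\sim\tfrac{1}{4\pi}\log^{2}T$ with the choice $T\asymp\sqrt{x}$. Two remarks are worth making. First, Schoenfeld works not with $\Pi(x)$ but with the Chebyshev function $\psi(x)=\sum_{n\le x}\Lambda(n)$ and its truncated explicit formula, passing to $\theta(x)$ and then to $\pi(x)$ by partial summation; the difference from your route is minor, but the $\psi$-formula avoids the logarithmic integrals $\operatorname{li}(x^{\rho})$ and makes the bookkeeping cleaner. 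Second, and more importantly, you yourself identify the genuine gap: controlling the tail $|\gamma|>T$ and the truncation remainder so that the leading constant survives as exactly $\tfrac{1}{8\pi}$ rather than merely $O(1)$. This is not a small detail --- it is the bulk of Schoenfeld's paper, and it rests on the explicit zero-density and remainder estimates developed jointly with Rosser over several earlier papers. Your proposal names the obstacle honestly but does not overcome it, so as a self-contained proof it remains incomplete precisely at the point where the real work lies.
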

\begin{proof}
See (\cite{Schoenfeld1976}, Theorem 10, Corollary 1).
\end{proof}
In relation to Ramanujan's prime counting inequality \eqref{Ramanujan-inequality} in
2012 Hassani \cite{Hassani2012} established the following theorem.
\begin{theorem}
Assume that the Riemann hypothesis is true. Then the inequality \eqref{Ramanujan-inequality} is valid for
$x\geq138\; 766\; 146\; 692\; 471\; 228$.
\end{theorem}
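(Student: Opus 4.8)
The plan is to combine the conditional estimate of Schoenfeld (Theorem~\ref{pilix}) with an explicit form of the asymptotic expansion of $\operatorname{li}$. Assume the Riemann hypothesis, write $L=\log x$, and put $x_0=138\,766\,146\,692\,471\,228$. For $x\geq x_0$ we have $x/e>2657$, so Theorem~\ref{pilix} lets us write $\pi(t)=\operatorname{li}(t)+\theta(t)$ with $|\theta(t)|\leq \sqrt{t}\,\log t/(8\pi)$ for both $t=x$ and $t=x/e$. Substituting this into Ramanujan's inequality, the quantity that must be shown positive splits as $M(x)+E(x)$ with
\begin{equation*}
M(x)=\frac{ex}{\log x}\operatorname{li}\!\left(\frac{x}{e}\right)-\operatorname{li}(x)^2,\qquad
E(x)=\frac{ex}{\log x}\,\theta\!\left(\frac{x}{e}\right)-2\operatorname{li}(x)\,\theta(x)-\theta(x)^2 .
\end{equation*}
It therefore suffices to prove $M(x)>|E(x)|$ for every $x\geq x_0$.

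For the main term I would invoke the integration-by-parts expansion of $\operatorname{li}$, truncated at a fixed and sufficiently large order $N$ (one may take $N=15$), with an explicit bound on the remainder integral (of size $\asymp N!\,x/(\log x)^{N+1}$ and, once $\log x\geq 39$, utterly negligible against the terms below). Enclosing $\operatorname{li}(x)$ and $\operatorname{li}(x/e)$ between the resulting explicit truncated series and inserting them into $M(x)$, the decisive combinatorial fact is that the coefficient of $x^2L^{-n-2}$ in the asymptotic expansion of $\frac{ex}{L}\operatorname{li}(x/e)-\operatorname{li}(x)^2$ equals $P_n-c_n$, where $P_n=n!\sum_{j=0}^{n}1/j!$ and $c_n=\sum_{k=0}^{n}k!\,(n-k)!$, and that $P_n-c_n=0$ for $n\leq 3$ while $P_n-c_n=1,14,145,1412,\dots$ for $n=4,5,6,7,\dots$ (remaining positive). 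This produces an effective lower bound of the form
\begin{equation*}
M(x)\geq\frac{x^2}{L^6}\left(1+\frac{14}{L}+\frac{145}{L^2}+\frac{1412}{L^3}+\cdots-\frac{C_1}{L^{K}}\right),
\end{equation*}
valid for $x\geq x_0$ with explicit $C_1$ and $K$.

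For the error term, Schoenfeld's bound and an explicit upper estimate $\operatorname{li}(x)\leq\frac{x}{L}\bigl(1+\tfrac1L+\tfrac2{L^2}+\tfrac6{L^3}+\tfrac{C_2}{L^4}\bigr)$ give
\begin{equation*}
|E(x)|\leq\frac{\sqrt{e}\,x^{3/2}}{8\pi}\cdot\frac{L-1}{L}+\frac{\sqrt{x}\,L}{4\pi}\operatorname{li}(x)+\frac{xL^2}{64\pi^2}\leq x^{3/2}\,\Phi(L),
\end{equation*}
with $\Phi(L)$ explicit and $\Phi(L)\to(\sqrt{e}+2)/(8\pi)$ as $L\to\infty$ (the last summand being of strictly smaller order in $x$). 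Comparing the two estimates, $M(x)>|E(x)|$ becomes an inequality of the shape $e^{L/2}>L^{6}\,\Psi(L)$ with $\Psi$ explicit and bounded; since $e^{L/2}/L^6$ grows exponentially while $\Psi$ stays bounded, this holds for all $x$ past an explicit threshold. Pinning down that threshold is a finite, elementary analysis of the two sides over the bounded range of $L$ near $L_0:=\log x_0\approx 39.47$ (using, e.g., that $e^{L/2}/L^6$ is increasing for $L>12$), and it yields exactly $x_0$.

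The step I expect to be the main obstacle is precisely this comparison, which is genuinely tight. At $x_0$ one finds that both $M(x)\approx\frac{x^2}{L^6}\bigl(1+14/L+145/L^2+\cdots\bigr)$ and $|E(x)|\approx\frac{\sqrt{e}+2}{8\pi}\,x^{3/2}$ are only about $0.146\,x^{3/2}$, so the argument closes only after enough terms of the $\operatorname{li}$-expansion in $M(x)$ are kept and all remainders — together with the exact constant $1/(8\pi)$ of Theorem~\ref{pilix} — are carried along unsimplified; retaining merely the leading term $M(x)\geq x^2/L^6$ already fails at $x_0$. The remaining, more routine, task is to make the enclosures of $\operatorname{li}(x)$ and $\operatorname{li}(x/e)$ rigorous and to verify the threshold inequality over the finite range of $L$ where the two sides are comparable, thereby upgrading ``it holds at $x_0$'' to ``it holds for all $x\geq x_0$''.
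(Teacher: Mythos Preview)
The paper gives no proof of this theorem; it simply refers the reader to Hassani's original article \cite{Hassani2012}. Your outline, by contrast, reconstructs the actual argument, and the strategy you describe---combine Schoenfeld's conditional bound $|\pi(t)-\operatorname{li}(t)|\le\sqrt{t}\log t/(8\pi)$ with a truncated asymptotic expansion of $\operatorname{li}$, isolate a main term $M(x)\sim x^2/L^6$ and an error $|E(x)|\sim\frac{\sqrt{e}+2}{8\pi}\,x^{3/2}$, and verify $M(x)>|E(x)|$ from $x_0$ onward---is precisely Hassani's method. Your combinatorial computation of the coefficients $P_n-c_n$ is correct, and your numerical assessment that both sides are about $0.146\,x^{3/2}$ at $x_0$ checks out.

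What remains is exactly what you flag: the sketch closes only once the $\operatorname{li}$-enclosures are made rigorous with explicit remainder constants and the inequality $M(x)>|E(x)|$ is verified over the range near $L_0\approx39.47$ where the comparison is genuinely tight. That numerical verification, not the structure of the argument, is where Hassani's specific threshold $x_0$ comes from, and until it is carried out your proposal is a correct plan rather than a completed proof.
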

\begin{proof}
See (\cite{Hassani2012}, Theorem 1.2).
\end{proof}
Subsequently in 2015 Dudek and Platt \cite{Dudek2015} refined Hassani's result by proving the next statement.
\begin{theorem}
Assume that the Riemann hypothesis is true. Then the inequality \eqref{Ramanujan-inequality} is valid for
$x\geq1.15\cdot10^{16}$.
\end{theorem}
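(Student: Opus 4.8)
\emph{Sketch of proof.} The plan is to convert \eqref{Ramanujan-inequality} into a statement about the logarithmic integral by means of Schoenfeld's conditional estimate in Theorem~\ref{pilix}. Assuming the Riemann hypothesis, that theorem gives $|\pi(t)-\operatorname{li}(t)|<\varepsilon(t)$ for $t\ge 2657$, where $\varepsilon(t):=\sqrt t\,\log t/(8\pi)$. Since in our range $x$ and $x/e$ are both enormous, I would insert $\pi(x)<\operatorname{li}(x)+\varepsilon(x)$ into the left side of \eqref{Ramanujan-inequality} and $\pi(x/e)>\operatorname{li}(x/e)-\varepsilon(x/e)$ into the right side; it then suffices to prove, for all sufficiently large $x$,
\begin{equation*}
\bigl(\operatorname{li}(x)+\varepsilon(x)\bigr)^{2}<\frac{ex}{\log x}\bigl(\operatorname{li}(x/e)-\varepsilon(x/e)\bigr),
\end{equation*}
equivalently
\begin{equation*}
\frac{ex}{\log x}\operatorname{li}(x/e)-\operatorname{li}^{2}(x)>2\varepsilon(x)\operatorname{li}(x)+\varepsilon^{2}(x)+\frac{ex}{\log x}\varepsilon(x/e).
\end{equation*}
The right-hand side is an elementary quantity whose dominant part is $\tfrac{2+\sqrt e}{8\pi}\,x^{3/2}$, so everything hinges on a lower bound for the left-hand side $F(x):=\tfrac{ex}{\log x}\operatorname{li}(x/e)-\operatorname{li}^{2}(x)$.

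For that I would use the asymptotic expansion $\operatorname{li}(t)=\frac{t}{\log t}\sum_{k=0}^{m}\frac{k!}{(\log t)^{k}}+\theta_m(t)$ with the classical explicit bound $|\theta_m(t)|\le C_m\,t/(\log t)^{m+2}$, expanding both $\operatorname{li}(x/e)$ (via $\log(x/e)=\log x-1$) and $\operatorname{li}(x)$ in powers of $1/\log x$. A short computation shows that the expansions of $\tfrac{ex}{\log x}\operatorname{li}(x/e)$ and of $\operatorname{li}^{2}(x)$ coincide through the term of order $x^{2}(\log x)^{-5}$ (with matching coefficients $1,2,5,16$), and that the first non-cancelling term, of order $x^{2}(\log x)^{-6}$, has coefficient $+1$; hence $F(x)=\frac{x^{2}}{(\log x)^{6}}\bigl(1+o(1)\bigr)>0$. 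Since $x^{2}(\log x)^{-6}$ eventually overtakes $x^{3/2}$, the displayed inequality, and with it \eqref{Ramanujan-inequality}, holds once $x$ exceeds some explicit threshold $X_1$. Producing a \emph{small} $X_1$ is the heart of the matter: one must take $m$ reasonably large, make the $C_m$ explicit, and bound from above every lower-order and cross term on the right side while bounding $F(x)$ from below, all uniformly for $x\ge X_1$. I expect this error bookkeeping, rather than any single conceptual step, to be the main obstacle, since the quality of the final constant is decided entirely by how tightly it is carried out.

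Finally I would dispose of the remaining interval $1.15\cdot10^{16}\le x<X_1$ by computation. Between consecutive primes $\pi(x)$ is constant while $\tfrac{ex}{\log x}\pi(x/e)$ is non-decreasing (because $x/\log x$ increases for $x>e$), so it is enough to check \eqref{Ramanujan-inequality} at the finitely many points where $\pi(x)$ jumps; in practice one partitions $[1.15\cdot10^{16},X_1)$ into subintervals, substitutes an explicit upper bound for $\pi(x)$ and an explicit lower bound for $\pi(x/e)$ of the Dusart type recorded earlier in this section (or exact tabulated values of $\pi$ at the partition points together with monotonicity), and verifies that the inequality survives with room to spare on each piece. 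The feasibility of this step again rests on $X_1$ being small, which is why the explicit analysis of $F(x)$ is the crux. One may add that the true threshold above which \eqref{Ramanujan-inequality} first holds is expected to lie far below $1.15\cdot10^{16}$, but the present method cannot reach it, since even the optimal conditional error $\varepsilon(t)$ pits a quantity of size $x^{3/2}$ against one of size $x^{2}(\log x)^{-6}$.
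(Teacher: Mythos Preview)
The paper is a survey and does not prove this statement; its ``proof'' is the single-line citation ``See (\cite{Dudek2015}, Lemma 3.2).'' Your sketch is a faithful outline of the Dudek--Platt argument (which in turn refines Hassani's earlier approach): use Schoenfeld's conditional bound to replace $\pi$ by $\operatorname{li}$ plus an explicit $O(\sqrt{x}\log x)$ error, expand $\operatorname{li}(x)$ and $\operatorname{li}(x/e)$ to high enough order in $1/\log x$ to see that $\tfrac{ex}{\log x}\operatorname{li}(x/e)-\operatorname{li}^{2}(x)=x^{2}(\log x)^{-6}(1+o(1))$, compare this against the accumulated error of size $\asymp x^{3/2}$ to obtain an explicit threshold $X_1$, and close the remaining interval by computation. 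Your identification of the matching coefficients $1,2,5,16$ and of the first surviving coefficient $1$ at order $x^{2}(\log x)^{-6}$ is correct, and your remark that the explicit bookkeeping of the remainder terms is where the constant $1.15\cdot10^{16}$ is won or lost is exactly the point of the cited paper.

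One small caution about your computational paragraph: on an interval between consecutive primes the right-hand side $\tfrac{ex}{\log x}\pi(x/e)$ is indeed non-decreasing, but not merely because $x/\log x$ increases---$\pi(x/e)$ also jumps (at the points $x=ep$), and both factors being non-decreasing is what you actually use. This does not affect the validity of your reduction to checking at primes.
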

\begin{proof}
See (\cite{Dudek2015}, Lemma 3.2).
\end{proof}
In 2018 Dusart \cite{Dusart2018a} improved the result in Theorem \ref{pilix}.
\begin{theorem}
 If the Riemann hypothesis holds then
\begin{equation*}
|\pi(x)-\operatorname{li}(x)|<\frac{\sqrt x}{8\pi}\log\left(\frac{x}{\log x}\right)
\end{equation*}
for $x\geq5639$.
\end{theorem}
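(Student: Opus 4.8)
The plan is to deduce the estimate from the known error term of the Chebyshev function under the Riemann hypothesis, pass to $\pi(x)$ by partial summation, and then recover the extra $\log\log x$ by tracking the lower-order terms carefully. Write $\theta(x)=\sum_{p\le x}\log p$. Abel summation gives the exact identity $\pi(x)=\frac{\theta(x)}{\log x}+\int_{2}^{x}\frac{\theta(t)}{t(\log t)^{2}}\,dt$, and the analogous manipulation of $\operatorname{li}(x)=\int_{2}^{x}\frac{dt}{\log t}+\operatorname{li}(2)$ yields $\operatorname{li}(x)=\frac{x}{\log x}+\int_{2}^{x}\frac{dt}{(\log t)^{2}}+c_{0}$. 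Subtracting,
\[
\pi(x)-\operatorname{li}(x)=\frac{\theta(x)-x}{\log x}+\int_{2}^{x}\frac{\theta(t)-t}{t(\log t)^{2}}\,dt+c_{1}
\]
with an explicit constant $c_{1}$, so it suffices to control the two terms on the right.

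For the first term I would invoke the Chebyshev-function input behind Schoenfeld's Theorem \ref{pilix}, namely $|\theta(x)-x|<\tfrac{1}{8\pi}\sqrt{x}\,(\log x)^{2}$ (valid under the Riemann hypothesis for $x$ above a small threshold); this already gives $\left|\tfrac{\theta(x)-x}{\log x}\right|<\tfrac{1}{8\pi}\sqrt{x}\log x$, i.e.\ Schoenfeld's bound for $\pi$. The saving of a $\log\log x$ must come from the interplay of this term with the integral and with the precise shape of the explicit formula for $\theta(t)-t$. The integral should not be estimated by the triangle inequality (that only yields $O(\sqrt{x})$, far too lossy); instead I would integrate by parts, writing it through $F(t)=\int_{2}^{t}(\theta(s)-s)\,ds$. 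Integrating the explicit formula for $\theta(t)-t$ term by term is legitimate and, thanks to the extra factor $\rho^{-1}$, produces an absolutely convergent series under the Riemann hypothesis, so $|F(t)|\le C t^{3/2}$ with a small constant $C$ (the $t^{3/2}$ coming both from the non-oscillatory $-\sqrt{t}$ contribution and from $\sum_{\rho}t^{\rho+1}/(\rho(\rho+1))$); hence the integral is only $O(\sqrt{x}/(\log x)^{2})$, with a favorable negative sign. What remains is to push the main term from $\tfrac{1}{8\pi}\sqrt{x}\log x$ down to $\tfrac{1}{8\pi}\sqrt{x}\log(x/\log x)$: here one re-examines the estimate of $\sum_{\rho}x^{\rho}/\rho$, truncating the sum over zeros at a height $T$ just below $\sqrt{x}$ (roughly $T\asymp\sqrt{x/\log x}$), bounding the low zeros via $\sum_{|\gamma|\le T}|\rho|^{-1}\le\tfrac{1}{2\pi}(\log T)^{2}+O(\log T)$ and the tail via the truncated Riemann--von Mangoldt formula, and optimising $T$ against the slack supplied by the negative corrections identified above.

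This asymptotic argument is valid only for $x\ge X_{1}$ with $X_{1}$ explicit and large; for $5639\le x\le X_{1}$ I would verify the inequality numerically, either directly from tabulated values of $\pi(x)-\operatorname{li}(x)$ or, more economically, from the sharper explicit bounds on $|\theta(x)-x|$ available in that range together with the displayed identity. The main obstacle is the middle step: keeping every constant second-order accurate, so that the logarithm produced is $\log(x/\log x)$ and not merely $\log x$. A triangle-inequality-only treatment loses the $\log\log x$ at once, so the entire content lies in the bookkeeping of the truncation height, the lower-order terms in the zero sums, the tail estimate, and the signs of the integral and prime-power corrections.
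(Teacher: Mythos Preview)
The paper is a survey and does not prove this theorem at all: its entire ``proof'' is the sentence ``See (\cite{Dusart2018a}, Proposition 2.6)''. So there is no argument in the paper to compare your proposal against; you are attempting to supply what the paper deliberately omits.

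Your outline is in the right spirit---the Abel summation identity relating $\pi$ to $\theta$, the input $|\theta(x)-x|<\tfrac{1}{8\pi}\sqrt{x}(\log x)^2$ under RH, and the idea of revisiting the truncated explicit formula to squeeze out the extra $\log\log x$ are exactly the ingredients Dusart uses. But as written this is a plan, not a proof: you yourself flag that ``the entire content lies in the bookkeeping'', and none of that bookkeeping is carried out. In particular, your claim that optimising the truncation height at $T\asymp\sqrt{x/\log x}$ together with sign cancellations from the integral will produce precisely the constant $\tfrac{1}{8\pi}$ and precisely the function $\log(x/\log x)$ is asserted rather than shown. The triangle-inequality bound on the integral already gives only $O(\sqrt{x})$, which is lower order and cannot by itself cancel a $\tfrac{1}{8\pi}\sqrt{x}\log\log x$ term from the main piece; the saving has to come from a genuinely sharper estimate of $\sum_{\rho}x^{\rho}/\rho$ (or equivalently of $\theta(x)-x$) than Schoenfeld's, and you have not established one. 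The numerical verification for $5639\le x\le X_1$ is also left unspecified. So the proposal identifies the correct mechanism but stops short of the actual work; to match Dusart you would need to execute the explicit constants in the zero-sum and truncation estimates and then carry out the finite check.
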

\begin{proof}
See (\cite{Dusart2018a}, Proposition 2.6).
\end{proof}

\makeatletter
\renewcommand{\@biblabel}[1]{[#1]\hfill}
\makeatother

\end{document}